\numberwithin{figure}{section}
\numberwithin{equation}{section}
\newtheorem{theorem}{Theorem}[section]
\newtheorem{lemma}[theorem]{Lemma}
\newtheorem{proposition}[theorem]{Proposition}
\newcommand\reallywidehat[1]{%
\savestack{\tmpbox}{\stretchto{%
  \scaleto{%
    \scalerel*[\widthof{\ensuremath{#1}}]{\kern-.6pt\bigwedge\kern-.6pt}%
    {\rule[-\textheight/2]{1ex}{\textheight}}
  }{\textheight}%
}{0.5ex}}%
\stackon[1pt]{#1}{\tmpbox}%
}
\newcommand{\Rm}[1]{
  \textup{\uppercase\expandafter{\romannumeral#1}}
}
\let\olddefinition\definition
\renewcommand{\definition}{\olddefinition\normalfont}
\let\oldremark\remark
\renewcommand{\remark}{\oldremark\normalfont}
\newcommand{\C}{\mathbb{C}}
\newcommand{\etab}{\boldsymbol{\eta}}
\newcommand{\F}{\mathcal{F}}
\newcommand{\G}{\mathbf{G}}
\newcommand{\N}{\mathbb{N}}
\newcommand{\Nz}{\mathbb{N}_0}
\newcommand{\Nc}{\mathcal{N}}
\renewcommand{\Mc}{\mathcal{M}}
\newcommand{\med}{\mathrm{med}}
\newcommand{\R}{\mathbb{R}}
\newcommand{\Rc}{\mathcal{R}}
\newcommand{\s}{\mathbf{s}}
\newcommand{\T}{\mathbb{T}}
\newcommand{\Tb}{\mathbf{T}}
\newcommand{\Z}{\mathbb{Z}}
\newcommand{\Time}{T}
\newcommand{\cutoff}{\mathfrak{b}}
\newcommand{\cutoffxi}{\mathfrak{d}}
\def\vp{\varphi}
\def\ve{\varepsilon}
\def\px{\partial_x}
\def\pt{\partial_t}
\def\S{\mathcal{S}}
\newcommand{\diff}{\,\mathrm{d}}
\DeclareMathOperator{\sgn}{sgn}
\def\ds{\displaystyle}
\def\bel{\begin{equation}\label}
\def\beq{\begin{equation}}
\def\eeq{\end{equation}}
\def\bega{\begin{array}}
\def\enda{\end{array}}
\def\@tocline#1#2#3#4#5#6#7{\relax
  \ifnum #1>\c@tocdepth 
  \else
    \par \addpenalty\@secpenalty\addvspace{#2}%
    \begingroup \hyphenpenalty\@M
    \@ifempty{#4}{%
      \@tempdima\csname r@tocindent\number#1\endcsname\relax
    }{%
      \@tempdima#4\relax
    }%
    \parindent\z@ \leftskip#3\relax \advance\leftskip\@tempdima\relax
    \rightskip\@pnumwidth plus4em \parfillskip-\@pnumwidth
    #5\leavevmode\hskip-\@tempdima
      \ifcase #1
       \or\or \hskip 1em \or \hskip 2em \else \hskip 3em \fi%
      #6\nobreak\relax
    \hfill\hbox to\@pnumwidth{\@tocpagenum{#7}}\par
    \nobreak
    \endgroup
  \fi}
\author{John K. Hunter}
\address{Department of Mathematics, University of California at Davis}
\email{jkhunter@ucdavis.edu}
\thanks{JKH was supported by the NSF under grant numbers DMS-1616988 and DMS-1908947}
\author{Jingyang Shu}
\address{Department of Mathematics, Temple University}
\email{jyshu@temple.edu}
\author{Qingtian Zhang}
\address{Department of Mathematics, West Virginia University}
\email{qingtian.zhang@mail.wvu.edu }
\title[SQG Front Equation]{Global Solutions of a Surface Quasi-Geostrophic Front Equation}
\date{\today}
\begin{document}\

\begin{abstract}
We consider a nonlinear, spatially-nonlocal initial value problem in one space dimension on $\mathbb{R}$ that describes the motion of surface quasi-geostrophic (SQG) fronts. We prove that the initial value problem has a unique local smooth solution under a convergence condition on the multilinear expansion of the nonlinear term in the equation, and, for sufficiently smooth and small initial data, we prove that the solution is global.
\end{abstract}

\maketitle

\tableofcontents

\section{Introduction}
In this paper, we prove the existence of global small, smooth solutions of the following initial value problem
\begin{equation}\label{fsqgivp}
\left\{\begin{aligned}
&\ds \varphi_t(x, t) + \int_\R \left[\varphi_x(x, t) - \varphi_x(x + \zeta, t)\right] \bigg\{\frac{1}{|\zeta|} - \frac{1}{\sqrt{\zeta^2 + [\varphi(x, t) - \varphi(x + \zeta, t)]^2}}\bigg\} \diff{\zeta} = 2 \log|\px| \varphi_x(x, t),
\\
 &\ds \varphi(x, 0) = \varphi_0(x),
\end{aligned}\right.\end{equation}
where $\vp \colon \R \times \R_+ \to \R$ is defined for $x\in \R$,  $t\in \R_+$, and $\log |\partial_x|$
is the Fourier multiplier operator with symbol $\log |\xi|$.
Our main result is stated in Theorem~\ref{global}.

This initial value problem describes front solutions of the surface quasi-geostrophic (SQG) equation
\begin{align}\label{SQG}
\theta_t+u\cdot \nabla\theta=0,\qquad
u=(-\Delta)^{-1/2}\nabla^{\perp}\theta,
\end{align}
where $(-\Delta)^{-1/2}$ is a fractional inverse Laplacian on $\R^2$, and $\nabla^\perp = (-\partial_y,\partial_x)$.
The SQG equation arises as a description of quasi-geostrophic flows confined to a surface \cite{sqg_lap, Ped87}. After the incompressible Euler equation, it is the most physically important member of a family of two-dimensional active scalar problems for $\theta$ with a divergence-free transport velocity $u=(-\Delta)^{-\alpha/2}\nabla^{\perp}\theta$ and $0<\alpha \le 2$. The case $\alpha=2$ gives the vorticity-stream function formulation of the incompressible Euler equation \cite{majda}, while $\alpha=1$ gives the SQG equation.

The SQG equation is also of interest from
an analytical perspective because it has similar features to the three-dimensional incompressible Euler equation \cite{sqg}; in both cases, the question of singularity formation in smooth solutions remains open. The SQG equation has global weak solutions \cite{Mar08, Res}, and, as for the Euler equation, non-unique weak solutions of the SQG initial value problem may be constructed by convex integration \cite{BSV19, IM20a}. The SQG equation also has a nontrivial family of global smooth solutions \cite{CCGS18}.

By SQG front solutions, we mean piecewise-constant solutions of \eqref{SQG} with
\[
\theta(x,y, t)=\begin{cases} \theta_+ & \text{if $y>\vp(x,t)$},\\
\theta_- & \text{if $y<\vp(x,t)$},
\end{cases}
\]
where $\theta_+$ and $\theta_-$ are distinct constants, in which $\theta$ has a jump discontinuity across a front located at $y=\vp(x,t)$ with $x\in \R$; in \eqref{fsqgivp}, the jump is
normalized to $\theta_+ - \theta_- = 2\pi$.
We assume that the front is a graph and do not consider questions related to the breaking or filamentation of the front.

We contrast these front solutions with SQG patches, in which
\[
\theta(x,y, t)=\begin{cases} \theta_+ & \text{if $(x,y)\in \Omega(t)$},\\
0 & \text{if $(x,y)\notin \Omega(t)$},
\end{cases}
\]
where $\Omega(t) \subset \R^2$ is a bounded, simply-connected region with smooth boundary.
Contour dynamics equations for the motion of patches in SQG, Euler, and generalized SQG (with arbitrary values of $0<\alpha\le 2$) are straightforward to write down, although they require an appropriate regularization of a locally non-integrable singularity in the Green's function of $(-\Delta)^{\alpha/2}$ when $0<\alpha \le 1$.
Local well-posedness of the contour dynamics equations for SQG and generalized SQG patches is proved in \cite{CCG18, Gan08},
and generalized SQG patches in the more locally singular regime $0<\alpha<1$ are studied in \cite{CCCGW12, KR20a, KR20b}.

The boundary of a vortex patch in the Euler equation remains globally smooth in time \cite{BeCo, Che, Che1},  but this question remains open for SQG patches. Splash singularities cannot occur in a smooth boundary of an SQG patch \cite{GS14}, while numerical results suggest the formation of complex, self-similar singularities in a single patch \cite{Dri, SD19}, and a curvature blow up when two patches touch \cite{CFMR05}. Singularity formation in the boundary of generalized SQG patches has been proved in the presence of a rigid boundary when $\alpha$ is sufficiently close to $2$ \cite{KiRyYaZl, KYZ17}, and a class of nontrivial global smooth solutions for SQG patches is constructed in \cite{CCGS16a, CCGS16b, GS18}.

When $0<\alpha <1$, it is straightforward to derive contour dynamics equations for fronts in the same way as one does for patches. In that case, C\'{o}rdoba \textit{et.~al.}\cite{CGI17} prove the global well-posedness of the initial-value problem on $\R$ for small, smooth generalized SQG fronts.

When $1\le \alpha\le 2$, additional problems arise in the formulation of contour dynamics equations for fronts as a result of the slow decay
of the Green's function and the lack of compact support of $\theta$.
Front equations, including \eqref{fsqgivp}, are derived by a regularization procedure in \cite{HSh17}, and a detailed derivation of
\eqref{fsqgivp} from the SQG equation is given in \cite{HSZ19}. Unlike the front equations with $\alpha\ne 1$, the SQG front equation requires both `ultraviolet' and `infrared' regularization in the front equation to account for the
failure of both local and global integrability of the SQG Green's function $G(r)= 1/r$ on $\R$. This failure leads to the logarithmic derivatives in \eqref{fsqgivp}, rather than the fractional derivatives that occur for generalized SQG fronts with $\alpha\ne 1$.

In the case of spatially periodic fronts with $x\in \T = \R/2 \pi \Z$, one can write down front equations directly by using the Green's function of $(-\Delta)^{\alpha/2}$ on the cylinder $\T\times \R$.
Local well-posedness for spatially-periodic SQG front-type equations is proved in \cite{Ro05} for $C^\infty$-solutions by a Nash-Moser method, and in \cite{FR11} for analytic solutions by a Cauchy-Kowalewski method. Almost sharp fronts, across which $\theta$ is continuous, are studied in \cite{CFR04, FLR12, FR12, FR15}.

The local well-posedness in Sobolev spaces of a cubically nonlinear approximation of \eqref{fsqgivp} for spatially periodic solutions is proved in \cite{HSZ1}. In this paper, we consider the fully nonlinear equation \eqref{fsqgivp} on $\R$. The problem on $\R$ differs from the problem on $\T$ in two respects. First, the logarithmic multiplier $\log|\xi|$ is unbounded at low frequencies, which does not occur on $\T$ when $\xi\in \Z\setminus\{0\}$ is discrete and nonzero. Second, the linearized equation on $\R$ provides dispersive decay, which allows us to get global solutions for sufficiently small, smooth initial data. In this paper, we do not attempt to obtain a sharp regularity result for these solutions.

 The general strategy for proving the global existence of small solutions of dispersive equations is to prove an energy estimate together with a dispersive decay estimate.
 Energy estimates for \eqref{fsqgivp} in the usual $H^s$-Sobolev spaces lead to a logarithmic loss of derivatives \cite{HSh17}.
 However, as shown in \cite{HSZ1} for spatially periodic solutions of the cubic approximation, we can obtain good energy estimates in suitably weighted $H^s$-spaces
 by para-linearizing the equation and using the linear dispersive term to control the logarithmic loss of derivatives from the nonlinear term.

The proof of the dispersive estimates is more delicate. The linear part of the equation provides $t^{-1/2}$ decay for the $L^\infty$-norm of the solution, but this is not sufficient to close the global energy estimates for the full equation, since the $O(t^{-1})$ contribution from the cubically nonlinear term is not integrable in time. We therefore need
to analyze the nonlinear dispersive behavior in more detail. We do this by
the method of space-time resonances introduced by Germain, Masmoudi and Shatah \cite{Germain,GMS09, GMS12}, together with
estimates for weighted $L^\infty_\xi$-norms --- the so-called $Z$-norms --- developed by Ionescu and his collaborators
 \cite{CGI17, DIP17, DIPP16,IP13, IP14, IP15, IPu16}.

Our $Z$-norm estimates in Section~\ref{sec-Znorm} involve a detailed frequency-space analysis. The most difficult part is the estimate of the cubically nonlinear terms.
In most regions of frequency space, these terms are nonresonant, and we can use integration-by-parts in either the spatial or temporal frequency variables to estimate the corresponding oscillatory integrals. In regions of space-time resonances, we use the method of modified scattering to account for the nonlinear, long-time asymptotics of the solutions \cite{IP12, ozawa}.

In \cite{CGI17}, where the authors prove global well-posedness of the initial-value problem for the generalized SQG front equation with $0<\alpha < 1$, the linearized equation $\vp_t=\partial_x|\partial_x|^{1 - \alpha}\vp$ has a scaling invariance, with
dispersion relation $\tau=\xi|\xi|^{1 - \alpha}$, and it commutes with
the vector field  $x\partial_x+ (2 - \alpha) t\partial_t$. This commutation provides a key ingredient in the dispersive estimates. The SQG
equation considered here corresponds to the limiting case $\alpha=1$, and its linearized  dispersion relation is $\tau=2\xi\log|\xi|$. The linearized equation $\vp_t=2\log|\partial_x|\vp_x$ is not scale-invariant, but it has a combined scaling-Galilean invariance
and commutes with the scaling-Galilean vector field $\S = (x+2t)\partial_x+t\partial_t$, which we use to obtain dispersive estimates.

This paper is organized as follows. In Section~\ref{sec-weyl}, we collect some fundamental facts and estimates that we use later. In Section~\ref{sec:expand_para}, we expand and para-linearize the nonlinear terms in the evolution equation.  In Section~\ref{sec-apriori}, we derive the weighted energy estimates and state a local existence and uniqueness result in Theorem~\ref{th:loc_exist}.
In Section~\ref{sec-global}, we state the global existence result in Theorem~\ref{global}. Finally, in Sections \ref{sec-sharp}--\ref{sec-Znorm} we carry out the three key steps in the proof of global existence: linear dispersive estimates; scaling-Galilean estimates; and nonlinear dispersive estimates.

\section{Preliminaries}\label{sec-weyl}
\subsection{Para-differential calculus }

In this section, we state several lemmas for Fourier multiplier operators
that follow from the Weyl para-differential calculus.
Further discussion of the Weyl calculus and para-products can be found in \cite{BCD11, Che1, Hor, Tay00}.

We denote the Fourier transform of $f \colon \R\to \C$ by $\hat f \colon \R\to \C$, where $\hat f= \F f$ is given by
\[
f(x)=\int_{\R} \hat f(\xi) e^{i\xi x} \diff\xi,  \qquad \hat f(\xi)=\frac1{2\pi} \int_{\R}f(x) e^{-i\xi x}\diff{x}.
\]
For $s\in \R$, we denote by $H^s(\R)$ the space of Schwartz distributions $f$ with $\|f\|_{H^s} < \infty$, where
\[
\|f\|_{H^s} = \left[\int_\R \left(1+|\xi|^2\right)^s |\hat{f}(\xi)|^2\, \diff{\xi}\right]^{1/2}.
\]
Throughout this paper, we use $A\lesssim B$ to mean there is a constant $C$ such that $A\leq C B$, and $A\gtrsim B$ to mean there is a constant $C$ such that $A\geq C B$. We use $A\approx B$ to mean that $A\lesssim B$ and $B\lesssim A$.

Let $\chi \colon \R \to \R$ be a smooth function such that
\begin{equation}
\text{$\chi$ is supported in the interval $\{\xi\in \R \mid |\xi|\leq 1/10\}$, and
$\chi(\xi) = 1$ on $\{\xi\in \R \mid |\xi|\leq 3/40\}$}.
\label{defchi}
\end{equation}
If $f$ is a Schwartz distribution and $a \colon \R \times \R \to \C$ is a symbol, then
we define a Weyl paraproduct $T_a f$ by
\begin{equation}
\label{weyldef}
\F \left[T_a f\right](\xi)= \int_{\R} \chi\bigg(\frac{|\xi-\eta|^2}{1+|\xi+\eta|^2}\bigg) \tilde{a}\Big(\xi-\eta, \frac{\xi + \eta}{2}\Big)\hat f(\eta)\diff\eta,
\end{equation}
where $\tilde{a}(\xi,\eta)$ denotes the partial Fourier transform of $a(x, \eta) $ with respect to $x$. For $r_1, r_2 \in \N_0 = \N \cup \{0\}$, we define a normed symbol space by
\begin{align}
\label{defMc}
\begin{split}
\Mc_{(r_1, r_2)} &= \{a \colon \R \times \R \to \C \mid \|a\|_{\Mc_{(r_1, r_2)}} < \infty\},
\\
\|a\|_{\Mc_{(r_1, r_2)}} &= \sup_{(x,\xi) \in \R^2} \left\{\sum_{\alpha=0}^ {r_1} \sum_{\beta=0}^{r_2}(1+|\xi|)^{\beta}\left| \px^\alpha\partial_\xi^\beta a(x, \xi)\right|\right\}.
\end{split}
\end{align}
The following lemma is proved in Appendix~\ref{sec:B}.
\begin{lemma}
\label{lem:taest}
Let $s\in \R$. If $a \in \Mc_{(1,1)}$ and $f \in {H}^s(\R)$, then $T_af\in H^s(\R)$ and
\[
\|T_a f\|_{H^s} \lesssim \|a\|_{\Mc_{(1,1)}} \|f\|_{H^s}.
\]
\end{lemma}

Next, we prove some commutator estimates. We denote by $\log_+|\px|$ the Fourier multiplier with symbol
\[
\log_+|\xi| = \begin{cases} \log|\xi| & \text{for $|\xi| > 1$}, \\
0 & \text{for $|\xi| \le 1$}.\end{cases}
\]
\begin{lemma}\label{Commu}
Let $s\in \R$. Suppose that $f\in H^s(\R)$, $a\in \Mc_{(2,1)}$, and $b\in \Mc_{(1,2)}$. Then
\begin{align}\label{LTa}
\|[\log_+|\px|, T_{a}]f\|_{ H^s}&\lesssim \|a\|_{\Mc_{(2, 1)}} \|f\|_{H^{s-1}},\\\label{xL}
\|[x, \log_+|\px|]f\|_{H^s}&\lesssim \|f\|_{H^{s-1}},\\ \label{xTa}
\|[x, T_{b}]f\|_{H^s}&\lesssim \|b\|_{\Mc_{(1, 2)}} \|f\|_{H^{s}}, \\\label{Txb}
\|x T_{b}f-T_{xb} f\|_{H^s}&\lesssim \|b\|_{\Mc_{(1, 2)}} \|f\|_{H^{s}}.
\end{align}
\end{lemma}

\begin{proof}
{\bf 1.} By the definition \eqref{weyldef} of the Weyl para-product, we have for $\xi\ne 0$ that
\begin{align}
\begin{split}
\F \left[\log_+|\px| T_a v\right](\xi)&= \log_+|\xi| \int_{\R}\chi\bigg(\frac{|\xi-\eta|^2}{1+|\xi+\eta|^2}\bigg)\tilde a\bigg(\xi-\eta, \frac{\xi+\eta}{2}\bigg)\hat v(\eta)\diff\eta
\\
&= \int_{\R}\log_+|\xi-\eta+\eta|\chi\bigg(\frac{|\xi-\eta|^2}{1+|\xi+\eta|^2}\bigg)\tilde a\bigg(\xi-\eta,\frac{\xi+\eta}2\bigg)\hat v(\eta)\diff\eta.
\end{split}
\label{FLT}
\end{align}
If $(\xi,\eta)$ belongs to the support of $\chi({|\xi-\eta|^2}/{(1 + |\xi+\eta|^2)})$, then we claim that
\begin{equation}
\left|\frac{\xi-\eta}{\eta}\right| \leq \frac{17}{18} \text{ when } |\eta|\ge 2.
\label{ineq1}
\end{equation}
To prove this claim, we observe that
\[
\frac{|\xi-\eta|^2}{1+|\xi+\eta|^2}\leq \frac1{10}
\]
implies that
\[
9\left|\frac{\xi-\eta}{\eta} - \frac{2}{9}\right|^2 \le  \frac{40}{9} + \frac{1}{\eta^2} \le \frac{169}{36},
\]
and it follows that
\[
\left|\frac{\xi-\eta}{\eta}\right|\leq \left|\frac{\xi-\eta}{\eta} - \frac{2}{9}\right| + \frac{2}{9} \le  \frac{17}{18}.
\]

We introduce a smooth cutoff function $\iota(\eta)$ supported in $\{|\eta|\le 3\}$ with $\iota(\eta) =1$ on $\{|\eta| \le 2\}$. In view of \eqref{ineq1}, when $|\eta| > 2$ we can use
\[
\log|\xi-\eta+\eta|=\log|\eta|+\log\left|1+\frac{\xi-\eta}{\eta}\right|,
\]
and we obtain from \eqref{FLT} that, for $|\xi|>1$,
\begin{align*}
\F  \left[\log_+|\px| T_a f\right](\xi)
&=\int_{\R}(1-\iota(\eta)) \bigg[\log|\eta|+\log\left|1+\frac{\xi-\eta}{\eta}\right|\bigg] \chi\bigg(\frac{|\xi-\eta|^2}{1+|\xi+\eta|^2}\bigg)\tilde a\left(\xi-\eta,\frac{\xi+\eta}2\right)\hat f(\eta)\diff\eta\\
& \qquad +\log_+|\xi| \int_{\R}\iota(\eta)\chi\bigg(\frac{|\xi-\eta|^2}{1+|\xi+\eta|^2}\bigg)\tilde a\bigg(\xi-\eta, \frac{\xi+\eta}{2}\bigg)\hat f(\eta)\diff\eta.
\end{align*}
We also have
\[
\F \left[T_a \log_+|\px| f\right](\xi)=\int_{\R}\log_+|\eta|\chi\bigg(\frac{|\xi-\eta|^2}{1+|\xi+\eta|^2}\bigg)\tilde a\left(\xi-\eta,\frac{\xi+\eta}2\right) \hat f(\eta)\diff\eta.
\]
By taking the difference of the previous two equations, we get
\begin{align}
\begin{split}
&\F  \left[\log_+|\px| T_a f\right](\xi)-\F \left[T_a \log_+|\px| f\right](\xi)
\\
= ~ & \int_{\R} (1-\iota(\eta))\bigg[\log\left|1+\frac{\xi-\eta}{\eta}\right|\bigg] \chi\bigg(\frac{|\xi-\eta|^2}{1+|\xi+\eta|^2}\bigg)\tilde a\left(\xi-\eta,\frac{\xi+\eta}2\right)\hat f(\eta)\diff\eta\\
&\qquad+\int_{\R}\iota(\eta)(\log_+|\xi| -\log_+|\eta|)\chi\bigg(\frac{|\xi-\eta|^2}{1+|\xi+\eta|^2}\bigg)\tilde a\bigg(\xi-\eta, \frac{\xi+\eta}{2}\bigg)\hat f(\eta)\diff\eta.
\end{split}
\label{LTcom}
\end{align}

The integrand in the first integral on the right-hand side of \eqref{LTcom} is supported on
\[
\{(\xi,\eta)\mid \text{$|\eta|>2$, $|\xi-\eta|/|\eta|<17/18$}\}.
\]
Thus, if $\mathcal P(\xi,\eta)$ is a smooth cut-off function supported in a small neighborhood of this set and equal to $1$ on the set, then the first integral can be written as
\begin{align*}
\int_{\R}\mathcal P(\xi,\eta) \bigg[\frac{\eta}{\xi-\eta}\log\left|1+\frac{\xi-\eta}{\eta}\right|\bigg] \chi\bigg(\frac{|\xi-\eta|^2}{1+|\xi+\eta|^2}\bigg)(\xi-\eta)\tilde a\left(\xi-\eta,\frac{\xi+\eta}2\right)\left[\frac{1-\iota(\eta)}{\eta} \hat f(\eta)\right]\diff\eta.
\end{align*}
We define
\[
\tilde A(\zeta_1,\zeta_2)=\frac{2\zeta_2-\zeta_1}{2i\zeta_1}\log\left|1+\frac{2\zeta_1}{2\zeta_2-\zeta_1}\right|
\widetilde{\partial_1a}(\zeta_1,\zeta_2)\mathcal P\left(\zeta_2+\frac{\zeta_1}2, \zeta_2-\frac{\zeta_1}2\right),
\]
so that
\[
A(x,\zeta_2)=\frac 12\partial_x^{-1}(2\zeta_2+i\partial_x)\log\left|1-2i\partial_x(2\zeta_2+i\partial_x)^{-1}\right| \mathcal P\left( \zeta_2-\frac{i\partial_x}2, \zeta_2+\frac{i\partial_x}2\right)
\partial_xa(x,\zeta_2).
\]
Then the first integral on the right-hand-side of \eqref{LTcom} can be written in terms of a para-differential operator with symbol $A$ as
\begin{align*}
\int_{\R}  \chi\bigg(\frac{|\xi-\eta|^2}{1+|\xi+\eta|^2}\bigg)\tilde{A}\left(\xi-\eta,\frac{\xi+\eta}2\right)\left[\frac{1-\iota(\eta)}{\eta} \hat f(\eta)\right]\diff\eta = \F\left[T_A g\right](\xi),\qquad g= \F^{-1}\left[\frac{1-\iota}{\eta} \hat f\right].
\end{align*}

By Lemma~\ref{lem:taest}, we have
\[
\left\|T_A g\right\| \lesssim \|A\|_{\Mc_{(1,1)}}\|g\|_{H^s} \lesssim\|A\|_{\Mc_{(1,1)}}\|f\|_{H^{s-1}}.
\]
Because of the cut-off function $\mathcal P$, we see that
the support of $\tilde A(\zeta_1,\zeta_2)$ is contained in
\[
|2\zeta_2-\zeta_1|>4,\qquad \left|\frac{2\zeta_1}{2\zeta_2-\zeta_1}\right|<\frac{17}{18}.
\]
So $\partial_1a(\cdot,\zeta_2)\mapsto A(\cdot,\zeta_2)$ is a zeroth order pseudo-differential operator. By carrying out a dyadic decomposition and using Bernstein's inequality \cite{BCD11}, we obtain that
\begin{align*}
\|A\|_{\Mc_{(1,1)}}
\lesssim \|a\|_{\Mc_{(2,1)}}.
\end{align*}
It follows that the first term on the right-hand side of \eqref{LTcom} satisfies the estimate \eqref{LTa}.

For the second term on the right-hand side of \eqref{LTcom}, the cutoff functions $\chi$, $\iota$ ensure that $|\xi| < 6$, $|\eta| < 3$.
Therefore we have the $H^s$-estimate
\begin{align*}
&\left\|(1+|\xi|^2)^{s/2}\int_{\R}\iota(\eta)(\log_+|\xi| -\log_+|\eta|)\chi\bigg(\frac{|\xi-\eta|^2}{1+|\xi+\eta|^2}\bigg)\tilde a\bigg(\xi-\eta, \frac{\xi+\eta}{2}\bigg)\hat f(\eta)\diff\eta\right\|_{L^2_\xi}\\
\lesssim~ &\left\|(1+|\xi|^2)^{s/2}\log_+|\xi|\int_{\R}\chi\bigg(\frac{|\xi-\eta|^2}{1+|\xi+\eta|^2}\bigg)\tilde a\bigg(\xi-\eta, \frac{\xi+\eta}{2}\bigg)[ \iota(\eta)\hat f(\eta)]\diff\eta\right\|_{L^2_\xi}\\
&+\left\|(1+|\xi|^2)^{s/2}\int_{\R}\chi\bigg(\frac{|\xi-\eta|^2}{1+|\xi+\eta|^2}\bigg)\tilde a\bigg(\xi-\eta, \frac{\xi+\eta}{2}\bigg)\left[ \iota(\eta)\log_+|\eta| \hat f(\eta)\right]\diff\eta\right\|_{L^2_\xi}\\
\lesssim~ &\left\|\int_{\R}\chi\bigg(\frac{|\xi-\eta|^2}{1+|\xi+\eta|^2}\bigg)\tilde a\bigg(\xi-\eta, \frac{\xi+\eta}{2}\bigg)[\iota(\eta)\hat f(\eta)]\diff\eta\right\|_{L^2_\xi}\\
&+\left\|\int_{\R}\chi\bigg(\frac{|\xi-\eta|^2}{1+|\xi+\eta|^2}\bigg)\tilde a\bigg(\xi-\eta, \frac{\xi+\eta}{2}\bigg)\left[\iota(\eta) \log_+|\eta| \hat f(\eta)\right]\diff\eta\right\|_{L^2_\xi}
\\
=~& \left\|T_a g\right\|_{L^2} + \left\|T_a h\right\|_{L^2},\qquad g = \F^{-1}[\iota \hat{f}], \quad h =  \F^{-1}[\iota\log_+|\eta|  \hat{f}],
\end{align*}
and Lemma~\ref{lem:taest} implies that the second term also satisfies\eqref{LTa}.

{\bf 2.} Taking Fourier transforms, we get that
\[
\F\Big([x,\log_+|\px|]f\Big) = i\partial_\xi[\log_+|\xi| \hat f(\xi)]-\log_+|\xi| (i\partial_\xi \hat f(\xi))=\frac{i}{\xi}\mathbf1_{|\xi|>1}\hat f(\xi),
\]
where $\mathbf1_{|\xi|>1}$ is the indicator function for the set $\{|\xi|>1\}$.
Then \eqref{xL} follows.

{\bf 3.} To prove \eqref{xTa}, we compute that
\begin{align*}
\F \left[[x, T_{b}]f\right](\xi)
& =i\partial_\xi \widehat{T_b f}(\xi)-\widehat{T_b(x f)}(\xi)\\
& =i \int_{\R} \partial_\xi\left[\chi\left(\frac{|\xi-\eta|^2}{1+|\xi+\eta|^2}\right) \tilde{b}\left(\xi-\eta, \frac{\xi + \eta}{2}\right)\right]\hat f(\eta)\diff\eta\\
& \qquad - i\int_{\R} \chi\left(\frac{|\xi-\eta|^2}{1+|\xi+\eta|^2}\right) \tilde{b}\left(\xi-\eta, \frac{\xi + \eta}{2}\right)\partial_\eta\hat f(\eta)\diff\eta.
\end{align*}
We rewrite the first integral above as
\[\begin{aligned}
&\int_{\R} \partial_\xi \left[\chi\left(\frac{|\xi-\eta|^2}{1+|\xi+\eta|^2}\right) \tilde{b}\left(\xi-\eta, \frac{\xi + \eta}{2}\right)\right]\hat f(\eta)\diff\eta\\
=&\int_{\R} (\partial_{\xi_1}+\partial_{\xi_2})  \left[\chi\left(\frac{|\xi_1-\eta|^2}{1+|\xi_2+\eta|^2}\right) \tilde{b}\left(\xi_1-\eta, \frac{\xi_2 + \eta}{2}\right)\right]\Bigg|_{\xi_1=\xi_2=\xi}\hat f(\eta)\diff\eta\\
=&\int_{\R} (2\partial_{\xi_2}-\partial_{\eta})  \left[\chi\left(\frac{|\xi_1-\eta|^2}{1+|\xi_2+\eta|^2}\right) \tilde{b}\left(\xi_1-\eta, \frac{\xi_2 + \eta}{2}\right)\right]\Bigg|_{\xi_1=\xi_2=\xi}\hat f(\eta)\diff\eta\\
=&\int_{\R} 2\partial_{\xi_2}  \bigg[\chi\bigg(\frac{|\xi_1-\eta|^2}{1+|\xi_2+\eta|^2}\bigg) \tilde{b}\Big(\xi_1-\eta, \frac{\xi_2 + \eta}{2}\Big)\bigg]\bigg|_{\xi_1=\xi_2=\xi}\hat f(\eta)\\
&\qquad\qquad\qquad\qquad\qquad\qquad +\bigg[\chi\bigg(\frac{|\xi-\eta|^2}{1+|\xi+\eta|^2}\bigg) \tilde{b}\Big(\xi-\eta, \frac{\xi + \eta}{2}\Big)\bigg]\partial_{\eta}\hat f(\eta) \diff\eta.
\end{aligned}
\]

It follows that
\begin{align}\nonumber
\F [x, T_{b}]f&= 2i\int_{\R} \partial_{\xi_2}  \left[\chi\left(\frac{|\xi_1-\eta|^2}{1+|\xi_2+\eta|^2}\right) \tilde{b}\left(\xi_1-\eta, \frac{\xi_2 + \eta}{2}\right)\right]\Bigg|_{\xi_1=\xi_2=\xi}\hat f(\eta)\diff\eta
\\\nonumber
&= 2 i \int_{\R}  \frac{2|\xi-\eta|^2(\xi+\eta)}{[1+|\xi+\eta|^2]^2}\chi'\left(\frac{|\xi-\eta|^2}{1+|\xi+\eta|^2}\right) \tilde{b}\left(\xi-\eta, \frac{\xi + \eta}{2}\right)\hat f(\eta)\diff\eta
\\
&\quad + i\int_{\R} \chi\left(\frac{|\xi-\eta|^2}{1+|\xi+\eta|^2}\right) \widetilde{\partial_2b}\left(\xi-\eta, \frac{\xi + \eta}{2}\right)\hat f(\eta)\diff\eta.
\label{xTb}
\end{align}

From \eqref{ineq1}, in the support of the cut-off function $\chi$ we have
\begin{align*}
&\frac1{18}|\eta|\leq |\xi| \leq \frac{35}{18}|\eta| \quad \text{when $|\eta|>2$},
\qquad \text{and} \qquad |\xi|<6\quad \text{when $|\eta|<2$}.
\end{align*}
Thus, the first integral on the right-hand-side of \eqref{xTb} satisfies
\begin{align*}
&\left\|(1+|\xi|^2)^{s/2}\int_{\R}  \frac{2|\xi-\eta|^2(\xi+\eta)}{[1+|\xi+\eta|^2]^2}\chi'\left(\frac{|\xi-\eta|^2}{1+|\xi+\eta|^2}\right) \tilde{b}\left(\xi-\eta, \frac{\xi + \eta}{2}\right)\hat f(\eta)\diff\eta\right\|_{L^2_\xi}\\
\lesssim &\left\|\int_{\R}  \frac{2|\xi-\eta|^2(\xi+\eta)}{[1+|\xi+\eta|^2]^2}\chi'\left(\frac{|\xi-\eta|^2}{1+|\xi+\eta|^2}\right) \tilde{b}\left(\xi-\eta, \frac{\xi + \eta}{2}\right)[\iota(\eta)\hat f(\eta)]\diff\eta\right\|_{L^2_\xi}\\
&\quad+\left\|\int_{\R}  \frac{2|\xi-\eta|^2(\xi+\eta)}{[1+|\xi+\eta|^2]^2}\chi'\left(\frac{|\xi-\eta|^2}{1+|\xi+\eta|^2}\right) \tilde{b}\left(\xi-\eta, \frac{\xi + \eta}{2}\right)[(1-\iota(\eta))(1+|\eta|^2)^{s/2}\hat f(\eta)]\diff\eta\right\|_{L^2_\xi}.
\end{align*}
These terms can be expressed in terms of a Weyl pseudo-differential operator $B^w$ in \eqref{weyl2} with symbol
\[
B(x,\xi) = \frac{4\xi\partial_x}{(1+4\xi^2)^2}\chi'\left(\frac{-\partial_x^2}{1+4\xi^2}\right)b(x,\xi).
\]
Using Theorem \ref{L2PDO} and Bernstein's inequality, we then get that
\begin{align*}
\left\|(1+|\xi|^2)^{s/2}\int_{\R}  \frac{2|\xi-\eta|^2(\xi+\eta)}{[1+|\xi+\eta|^2]^2}\chi'\left(\frac{|\xi-\eta|^2}{1+|\xi+\eta|^2}\right) \tilde{b}\left(\xi-\eta, \frac{\xi + \eta}{2}\right)\hat f(\eta)\diff\eta\right\|_{L^2_\xi} &\lesssim
\left\|B\right\|_{\Mc_{(1,1)}}\|f\|_{H^s}
\\
&\lesssim
\left\|b\right\|_{\Mc_{(1,1)}}\|f\|_{H^s}.
\end{align*}
The second integral on the right-hand-side of \eqref{xTb} is the paraproduct $\F[T_{\partial_2b} f]$.
By using Lemma~\ref{lem:taest} and the previous estimate, we then obtain \eqref{xTa}.

{\bf 4. } We compute that
\[\begin{aligned}
&\F(xT_bf-T_{xb}f)\\
=&i\int_{\R} \partial_\xi \left[\chi\left(\frac{|\xi-\eta|^2}{1+|\xi+\eta|^2}\right) \tilde{b}\left(\xi-\eta, \frac{\xi + \eta}{2}\right)\right]\hat f(\eta)\diff\eta-i\int_{\R} \left[\chi\left(\frac{|\xi-\eta|^2}{1+|\xi+\eta|^2}\right) \partial_1\tilde{b}\left(\xi-\eta, \frac{\xi + \eta}{2}\right)\right]\hat f(\eta)\diff\eta\\
=&i\int_{\R}\left[\partial_\xi \chi\left(\frac{|\xi-\eta|^2}{1+|\xi+\eta|^2}\right) \tilde{b}\left(\xi-\eta, \frac{\xi + \eta}{2}\right)+\frac12 \chi\left(\frac{|\xi-\eta|^2}{1+|\xi+\eta|^2}\right)\partial_2 \tilde{b}\left(\xi-\eta, \frac{\xi + \eta}{2}\right) \right]\hat f(\eta)\diff\eta.
\end{aligned}
\]
The first term satisfies
\[\begin{aligned}
\left\|\int_{\R}\partial_\xi \chi\left(\frac{|\xi-\eta|^2}{1+|\xi+\eta|^2}\right) \tilde{b}\left(\xi-\eta, \frac{\xi + \eta}{2}\right)\hat f(\eta)\diff\eta\right\|_{H^s}\lesssim \|b\|_{\Mc_{(1,1)}}\|f\|_{H^s},
\end{aligned}
\]
and the second term satisfies
\[\begin{aligned}
\left\|\int_{\R}\chi\left(\frac{|\xi-\eta|^2}{1+|\xi+\eta|^2}\right)\partial_2 \tilde{b}\left(\xi-\eta, \frac{\xi + \eta}{2}\right)\hat f(\eta)\diff\eta\right\|_{H^s}\lesssim \|b\|_{\Mc_{(1,2)}}\|f\|_{H^s},
\end{aligned}
\]
which proves \eqref{Txb}.
\end{proof}

Finally, we give an expansion of $|D|$ acting on para-products (\emph{cf.} \cite{kato-ponce}).

\begin{lemma}\label{lem-DsT}
Let $s\in \R$, $s\geq2$. If $a \in \Mc_{(3,1)}$ and $f\in H^s(\R)$, then
\[
|D|^sT_a f=T_a |D|^sf+sT_{Da}|D|^{s-2}Df + \Rc,
\]
where $\Rc$ satisfies
\[
\|\Rc\|_{L^2} \lesssim \|a\|_{\Mc_{(3, 1)}} \|f\|_{H^{s - 2}(\R)},
\]
and $Da$ means that the differential operator $D$ acts on the function $x\mapsto a(x, \xi)$ for fixed $\xi$.
\end{lemma}
\begin{proof}
By the definition of the Weyl paraproduct
\[
\begin{aligned}
\F (|D|^sT_a f)(\xi)&=  |\xi|^s \int_{\R} \chi\left(\frac{|\xi-\eta|^2}{1+|\xi+\eta|^2}\right) \tilde{a}\left(\xi-\eta, \frac{\xi + \eta}{2}\right)\hat f(\eta)\diff\eta
\\
&= \int_{\R}  |\xi-\eta+\eta|^s\chi\left(\frac{|\xi-\eta|^2}{1+|\xi+\eta|^2}\right) \tilde{a}\left(\xi-\eta, \frac{\xi + \eta}{2}\right)\hat f(\eta)\diff\eta,
\end{aligned}
\]
where $\tilde{a}$ denotes the partial Fourier transform of $a$ in the first variable. The low frequency part satisfies the reminder estimate, so it can be absorbed into $\Rc$, and we only need to consider the high frequency part with $|\eta|>2$. In that case, \eqref{ineq1} is satisfied on the support of $\chi\left({|\xi-\eta|^2}/({1 + |\xi+\eta|^2})\right)$. Define $b(x)=(1+x)^s-1-sx$. Then
\[
|\xi-\eta+\eta|^s=|\eta|^s\left|1+\frac{\xi-\eta}{\eta}\right|^s = |\eta|^s\left[1+s\frac{\xi-\eta}{\eta}+b\left(\frac{\xi-\eta}{\eta}\right)\right].
\]
In the expression for $\F\big[|D|^s T_a f\big]$, we get
\[
\begin{aligned}
\F & \big[|D|^s T_a f\big](\xi) \\
&= \int_{\R}  |\eta|^s\left[1+s\frac{\xi-\eta}{\eta}+b\left(\frac{\xi-\eta}{\eta}\right)\right]\chi\bigg(\frac{|\xi-\eta|^2}{1+|\xi+\eta|^2}\bigg) \tilde{a}\Big(\xi-\eta, \frac{\xi + \eta}{2}\Big)\hat f(\eta)\diff\eta.
\end{aligned}
\]
Then we only need to estimate
\begin{align}\label{remhigh}
 \int_{\R}|\eta|^{2} b\left(\frac{\xi-\eta}{\eta}\right)\chi\bigg(\frac{|\xi-\eta|^2}{1+|\xi+\eta|^2}\bigg) \tilde{a}\Big(\xi-\eta, \frac{\xi + \eta}{2}\Big)[(1-\iota(\eta)) |\eta|^{s - 2}\hat f(\eta)]\diff\eta.
\end{align}
Define the symbol $A$ by
\[
\tilde A(\zeta_1,\zeta_2)=\left|\frac{2\zeta_2-\zeta_1}2\right|^{2}\left(1-\iota\Big(\frac{2\zeta_2-\zeta_1}2\Big)\right)b\left(\frac{2\zeta_1}{2\zeta_2-\zeta_1}\right)\tilde a(\zeta_1,\zeta_2).
\]
Then \eqref{remhigh} can be viewed as a para-differential operator with symbol $A$. By considering the supports of $\chi$, $\iota$
and using Bernstein's inequality, we see that
\[
\|A\|_{\Mc_{(1,1)}}\lesssim \|a\|_{\Mc_{(3, 1)}}.
\]
The result then follows by applying Lemma~\ref{lem:taest} to \eqref{remhigh}.
\end{proof}

\subsection{Fourier multipliers}

Let $\psi \colon\R\to [0,1]$ be a smooth function supported in $[-8/5, 8/5]$ and equal to $1$ in $[-5/4, 5/4]$.
For any $k\in \mathbb Z$, we define
\begin{align}
\label{defpsik}
\begin{split}
\psi_k(\xi)&=\psi(\xi/2^k)-\psi(\xi/2^{k-1}), \qquad \psi_{\leq k}(\xi)=\psi(\xi/2^k),\qquad \psi_{\geq k}(\xi)=1-\psi(\xi/2^{k-1}),\\
\tilde\psi_k(\xi)&=\psi_{k-1}(\xi)+\psi_k(\xi)+\psi_{k+1}(\xi),
\end{split}
\end{align}
and denote by $P_k$, $P_{\leq k}$, $P_{\geq k}$, and $\tilde{P}_k$  the Fourier multiplier operators with symbols $\psi_k, \psi_{\leq k}, \psi_{\geq k}$, and $\tilde{\psi}_k$, respectively. Notice that $\psi_k(\xi)=\psi_0(\xi/2^k)$, $\tilde\psi_k(\xi)=\tilde\psi_0(\xi/2^k)$.

It is easy to check that
\begin{equation}
\label{psi-L2}
 \|\psi_k\|_{L^2}\approx  2^{k/2}, \qquad  \|\psi_k'\|_{L^2}\approx 2^{-k/2}.
\end{equation}

We will need the following interpolation lemma, whose proof can be found in \cite{IPu16}.
\begin{lemma}\label{interpolation}
For any $k\in\mathbb Z$ and $f\in L^2(\R)$, we have
\[
\|\widehat{P_kf}\|_{L^\infty}^2\lesssim \|P_k f\|_{L^1}^2\lesssim 2^{-k}\|\hat f\|_{L^2_\xi}\left[2^k\|\partial_\xi\hat f\|_{L^2_\xi}+\|\hat f\|_{L^2_\xi}\right].
\]
\end{lemma}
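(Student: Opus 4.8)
The plan is to establish the two inequalities separately. The bound $\|\widehat{P_kf}\|_{L^\infty}^2\lesssim\|P_kf\|_{L^1}^2$ is immediate from the definition of the Fourier transform: for every $\xi\in\R$,
\[
|\widehat{P_kf}(\xi)|=\Big|\frac{1}{2\pi}\int_\R P_kf(x)e^{-ix\xi}\diff x\Big|\le\frac{1}{2\pi}\|P_kf\|_{L^1}.
\]

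For the second inequality I would combine a weighted Cauchy--Schwarz estimate in physical space with Plancherel and an optimization over a free parameter. We may assume $\partial_\xi\hat f\in L^2$, since otherwise the right-hand side is infinite. Fix $\lambda>0$ and write $|P_kf|=(1+\lambda^2x^2)^{-1/2}\cdot(1+\lambda^2x^2)^{1/2}|P_kf|$; Cauchy--Schwarz together with $\int_\R(1+\lambda^2x^2)^{-1}\diff x=\pi/\lambda$ gives
\[
\|P_kf\|_{L^1}^2\le\frac{\pi}{\lambda}\Big(\|P_kf\|_{L^2}^2+\lambda^2\|xP_kf\|_{L^2}^2\Big).
\]
Since $\widehat{P_kf}=\psi_k\hat f$ and $\widehat{xP_kf}=i\partial_\xi(\psi_k\hat f)$, Plancherel rewrites the right-hand side as a constant multiple of $\lambda^{-1}\|\psi_k\hat f\|_{L^2_\xi}^2+\lambda\|\partial_\xi(\psi_k\hat f)\|_{L^2_\xi}^2$. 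Choosing $\lambda=\|\psi_k\hat f\|_{L^2_\xi}/\|\partial_\xi(\psi_k\hat f)\|_{L^2_\xi}$ --- and noting that if either factor vanishes then $\psi_k\hat f\equiv0$ (a constant of compact support must be zero), so $P_kf\equiv0$ and there is nothing to prove --- yields
\[
\|P_kf\|_{L^1}^2\lesssim\|\psi_k\hat f\|_{L^2_\xi}\,\|\partial_\xi(\psi_k\hat f)\|_{L^2_\xi}.
\]
I would then bound $\|\psi_k\hat f\|_{L^2_\xi}\le\|\hat f\|_{L^2_\xi}$ and, using $\psi_k(\xi)=\psi_0(\xi/2^k)$ so that $\|\psi_k\|_{L^\infty}\lesssim1$ and $\|\psi_k'\|_{L^\infty}\lesssim2^{-k}$,
\[
\|\partial_\xi(\psi_k\hat f)\|_{L^2_\xi}\le\|\psi_k'\hat f\|_{L^2_\xi}+\|\psi_k\partial_\xi\hat f\|_{L^2_\xi}\lesssim2^{-k}\|\hat f\|_{L^2_\xi}+\|\partial_\xi\hat f\|_{L^2_\xi}.
\]
Multiplying these two estimates gives $\|P_kf\|_{L^1}^2\lesssim2^{-k}\|\hat f\|_{L^2_\xi}\big(2^k\|\partial_\xi\hat f\|_{L^2_\xi}+\|\hat f\|_{L^2_\xi}\big)$, which is the claim.

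The argument is routine; the only point needing care --- and the reason a fixed choice such as $\lambda=2^k$ is not good enough --- is the optimization of the weight $\lambda$, which is precisely what makes the final bound scale-critical and matches the right-hand side of the lemma. Equivalently, one could replace the smooth weight by a sharp spatial cutoff to $\{|x|\lesssim R\}$ and optimize over $R$; this is the one-dimensional interpolation inequality $\|h\|_{L^1}\lesssim\|h\|_{L^2}^{1/2}\|xh\|_{L^2}^{1/2}$, which gives the same conclusion. Since the statement is quoted from \cite{IPu16}, the write-up can be kept short.
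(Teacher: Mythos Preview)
Your proof is correct. The paper does not actually prove this lemma; it simply states the result and refers to \cite{IPu16} for the proof, so there is no ``paper's own proof'' to compare against. Your argument---weighted Cauchy--Schwarz in physical space followed by optimization of the weight parameter, which is equivalent to the interpolation inequality $\|h\|_{L^1}\lesssim\|h\|_{L^2}^{1/2}\|xh\|_{L^2}^{1/2}$---is the standard way to establish this type of estimate and matches what one finds in the cited reference.
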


We will also use an estimate for multilinear Fourier multipliers proved in \cite{IP15}. Before stating the estimate,
we introduce some notation.

We define a norm on symbols $\kappa \colon \R^d \to \C$ by
\[
\|\kappa\|_{S^\infty}=\|\F^{-1}\kappa\|_{L^1},
\]
and define the symbol class
\begin{align}
\label{Sinf}
S^\infty =\left\{\kappa \colon \R^d\to \C \mid \text{$\kappa$ continuous and
$\|\kappa\|_{S^\infty}<\infty$}\right\}.
\end{align}

Given $\kappa \in S^\infty$, we define a multilinear operator $M_\kappa$ acting on Schwartz functions  $f_1,\dotsc, f_m \in \mathcal{S}(\R)$ by
\begin{align*}
M_{\kappa}(f_1,\dotsc,f_m)(x)=\int_{\R^{m}} e^{ix(\xi_1+\dotsb+\xi_m)}\kappa(\xi_1, \dotsc, \xi_m)\hat f_1(\xi_1) \dotsm \hat f_m(\xi_m)\diff{\xi_1} \dotsm \diff{\xi_m}.
\end{align*}

\begin{lemma}\label{multilinear}
(i)~If $\kappa_1, \kappa_2\in S^\infty$, then $\kappa_1\kappa_2\in S^\infty$.

(ii)~Suppose that $1 \le p_1, \dotsc, p_m\leq \infty$, $1 \le p \le \infty$, satisfy
\[
\frac1{p_1}+\frac1{p_2}+ \dotsb +\frac1{p_m}=\frac1p.
\]
 If $\kappa \in S^\infty$, then
\[
\|M_{\kappa}\|_{L^{p_1}\times \dotsb \times L^{p_m}\to L^p}\lesssim \|\kappa\|_{S^\infty}.
\]
(iii)~ Assume $p, q, r\in [1, \infty]$ satisfy $1/p+1/q + 1/r=1$, and $m \in S^\infty_{\eta_1, \eta_2} L^\infty_\xi$. Then, for any $f \in L^p(\R)$, $g \in L^q(\R)$, and $h \in L^r(\R)$,
\[
\left\|\int_{\R^2} m(\eta_1,\eta_2, \xi)\hat f(\eta_1)\hat g(\eta_2)\hat h(\xi-\eta_1-\eta_2) \diff\eta_1\diff\eta_2 \right\|_{L^\infty_\xi}\lesssim \|m\|_{S^\infty_{\eta_1,\eta_2}L^\infty_\xi}\|f\|_{L^p}\|g\|_{L^q}\|h\|_{L^r}.
\]
\end{lemma}

In particular, using interpolation, we can estimate the $S^{\infty}$-norm of a symbol $m(\eta_1,\eta_2)$ in $C_c^\infty$ by
\begin{align}\label{SymEst}
\begin{split}
\|m\|_{S^\infty}\lesssim  \|m\|_{L^1}^{1/4} \|\partial_{\eta_i}^2m\|_{L^1}^{1/2}\|\partial_{\eta_1}^2\partial_{\eta_2}^2m\|_{L^1}^{1/4}
\qquad \text{where $i = 1,2$}.
\end{split}
\end{align}

\section{Reformulation of the equation}
\label{sec:expand_para}

\subsection{Expansion of the equation}
In this section, we expand the nonlinearity in the SQG front equation
\begin{equation}
\label{full-sqg}
\varphi_t(x, t) + \int_\R \left[\varphi_x(x, t) - \varphi_x(x + \zeta, t)\right] \bigg\{\frac{1}{|\zeta|} - \frac{1}{\sqrt{\zeta^2 + [\varphi(x, t) - \varphi(x + \zeta, t)]^2}}\bigg\} \diff{\zeta} = 2 \log|\px| \varphi_x(x, t)
\end{equation}
for fronts with small slopes $|\vp_x| \ll 1$. As we will show, \eqref{full-sqg} can be rewritten as
\begin{align}
\label{Tn-sqg}
\begin{split}
&\varphi_t(x, t) - \sum_{n = 1}^\infty \frac{c_n}{2n + 1} \px \int_{\R^{2n+1}} \Tb_n(\etab_n) \hat\varphi(\eta_1, t) \hat\varphi(\eta_2, t) \dotsm \hat\varphi(\eta_{2n+1}, t) e^{i (\eta_1 + \eta_2 + \dotsb + \eta_{2n + 1}) x}\diff{\etab_n}
\\
&\qquad\qquad= 2 \log|\px| \varphi_x(x, t),
\end{split}
\end{align}
where $\etab_n = (\eta_1, \eta_2, \dotsc, \eta_{2n+1})$, and
\begin{equation}
\label{Tnintdef}
\begin{aligned}
\Tb_n(\etab_n) &= \int_\R \frac{\prod_{j = 1}^{2n+1} (1 - e^{ i \eta_j \zeta})}{|\zeta|^{2n+1}} \diff{\zeta},
\qquad c_n = \frac{\sqrt{\pi}}{\Gamma\left(\frac 12 - n\right) \Gamma(n + 1)}.
\end{aligned}
\end{equation}
We remark that $c_n = O(n^{-1/2})$ as $n \to \infty$.

In fact, if we expand the nonlinearity in \eqref{full-sqg} around $\vp_x(x, t) = 0$, we find that
\[
\begin{aligned}
& \int_\R \bigg[\frac{\varphi_x(x, t) - \varphi_x(x + \zeta, t)}{|\zeta|} - \frac{\varphi_x(x, t) - \varphi_x(x + \zeta, t)}{\sqrt{\zeta^2 + (\varphi(x, t) - \varphi(x + \zeta, t))^2}}\bigg] \diff{\zeta}\\
 = ~& - \sum_{n = 1}^\infty c_n \int_\R \frac{\left[\varphi_x(x, t) - \varphi_x(x + \zeta, t)\right] \cdot [\varphi(x, t) - \varphi(x + \zeta, t)]^{2n}}{|\zeta|^{2n + 1}} \diff{\zeta}\\
= ~& - \sum_{n = 1}^\infty \frac{c_n}{2n + 1} \px \int_\R \left[\frac{\varphi(x, t) - \varphi(x + \zeta, t)}{|\zeta|}\right]^{2n + 1} \diff{\zeta}.
\end{aligned}
\]
Writing
\[
f_n(x) = \int_\R \left[\frac{\varphi(x) - \varphi(x + \zeta)}{|\zeta|}\right]^{2n + 1} \diff{\zeta},
\qquad
\varphi(x) = \int_\R \hat\varphi(\eta) e^{i \eta x} \diff{\eta},
\]
we have
\[
f_n(x) = \int_{\R^{2n+1}} \Tb_n(\etab_n) \hat\varphi(\eta_1) \hat\varphi(\eta_2) \dotsm \hat\varphi(\eta_{2n+1}) e^{i (\eta_1 + \eta_2 + \dotsb + \eta_{2n + 1}) x}\diff{\etab_n},
\]
which gives \eqref{Tn-sqg}.

Isolating the lowest degree nonlinear term  in \eqref{Tn-sqg}, which is cubic, we can also write \eqref{full-sqg} as
\begin{align}
\label{cub-sqg}
\begin{split}
&\varphi_t(x,t) + \frac{1}{6} \px \int_{\R^{3}} \Tb_1(\eta_1,\eta_2,\eta_3) \hat\varphi(\eta_1, t) \hat\varphi(\eta_2, t) \hat\varphi(\eta_{3}, t) e^{i (\eta_1 + \eta_2 +  \eta_{3}) x}\diff{\eta_1}\diff{\eta_2}\diff{\eta_3}
\\
&\qquad\qquad\qquad\qquad + \Nc_{\geq 5}(\varphi)(x,t) = 2 \log|\px| \varphi_x(x,t),
\end{split}
\end{align}
where $\Nc_{\geq 5}(\vp)$ denotes the nonlinear terms of quintic degree or higher
\begin{equation}
\label{N>=5}
\Nc_{\geq 5}(\vp)(x, t) = - \sum_{n = 2}^\infty \frac{c_n}{2n + 1} \partial_x \int_{\R^{2n+1}} \Tb_n(\etab_n) \hat\varphi(\eta_1, t) \hat\varphi(\eta_2, t) \dotsm \hat\varphi(\eta_{2n+1}, t) e^{i (\eta_1 + \eta_2 + \dotsb + \eta_{2n + 1}) x}\diff{\etab_n}.
\end{equation}
Equation \eqref{cub-sqg} will be used in Section \ref{sec-Znorm} in order to carry out nonlinear dispersive estimates, where the main difficulty is controlling the slowest decay in time caused by the lowest degree, cubic nonlinearity.

In the appendix, we evaluate the integrals in \eqref{Tnintdef}
and show that we can write \eqref{Tn-sqg} in the alternative form
\begin{equation}
\label{expd-sqg}
\begin{aligned}
\varphi_t + \px \left\{\sum_{n = 1}^\infty \sum_{\ell = 1}^{2n + 1} (-1)^{\ell + 1} d_{n, \ell}\varphi^{2n - \ell + 1}\px^{2n} \log|\px| \varphi^\ell\right\} = 2 \log|\px| \varphi_x,
\end{aligned}
\end{equation}
where the constants $d_{n, \ell}$ are given in \eqref{ddef}.
We will not use \eqref{expd-sqg} in this paper since it makes sense classically only for $C^\infty$-solutions and does not make explicit the fact that, owing to a cancelation of derivatives, the nonlinear flux in \eqref{expd-sqg} involves at most logarithmic derivatives of $\vp$. However, we remark that if the quintic and higher-order terms in \eqref{expd-sqg} are neglected, then the equation becomes
\begin{align*}
\varphi_t + \frac 12 \px \bigg\{\varphi^2 \log|\px| \varphi_{xx} - \varphi \log|\px| (\vp^2)_{xx} + \frac 13 \log|\px| (\vp^3)_{xx}\bigg\} = 2 \log|\px| \varphi_x,
\end{align*}
which is the cubic approximation for the front equation that is derived in \cite{HSh17} and analyzed in \cite{HSZ1}.

\subsection{Para-linearization of the equation}
\label{sec-simeqn}

In this section, we para-linearize the SQG front equation \eqref{Tn-sqg} and put it in a form that allows us to make weighted energy estimates. This form
extracts a nonlinear term $L(T_{B^{\log}[\vp]}\vp)$ from the flux that is responsible for the logarithmic loss of derivatives in the dispersionless equation.

We use Weyl para-differential calculus to decompose the nonlinearity in \eqref{full-sqg}. In the following, we use $C(n,s)$
to denote
a positive constant depending only on $n$ and $s$, which may change from line to line.

\begin{proposition}
Suppose that $\varphi(\cdot, t) \in H^s(\R)$ with $s \geq 4$ and  $\|\varphi_x\|_{W^{2, \infty}} + \|L \varphi_x\|_{W^{2, \infty}}$ is sufficiently small. Then \eqref{full-sqg} can be written as
\begin{equation}
\label{fsqgeq}
\vp_t + \px T_{B^0[\vp]} \vp + \Rc(\vp) = L \big[(2 - T_{B^{\log}[\vp]}) \vp\big]_x,
\end{equation}
where the symbols $B^0[\vp]$ and $B^{\log}[\vp]$ are given by the following multilinear expansions in $\vp_x$:
\begin{align}
\label{defBlog}
\begin{split}
B^{\log}[\vp](\cdot, \xi) &=\sum_{n = 1}^\infty B^{\log}_n[\vp](\cdot, \xi), \quad B^{0}[\vp](\cdot, \xi) =\sum_{n = 1}^\infty  B^{0}_n[\vp](\cdot, \xi),\\
B^{\log}_n[\vp](\cdot, \xi) &= -\F^{-1}_\zeta \biggl\{ 2c_n \int_{\R^{2n}} \delta\biggl(\zeta - \sum_{j = 1}^{2n} \eta_j\biggr) \prod_{j = 1}^{2n} \biggl[i \eta_j \hat{\vp}(\eta_j) \chi\Big(\frac{(2n + 1)\eta_j}{\xi}\Big)\biggr] \diff{\hat{\etab}_n}\biggr\},\\
B^{0}_n[\vp](\cdot, \xi) &= \F^{-1}_\zeta \biggl\{ 2c_n \int_{\R^{2n}} \delta\bigg(\zeta - \sum_{j = 1}^{2n} \eta_j\bigg) \prod_{j = 1}^{2n} \biggl[i \eta_j \hat{\vp}(\eta_j) \chi\Big(\frac{(2n + 1)\eta_j}{\xi}\Big)\biggr]\biggr.
\\
&\hspace{2.3in} \biggl.\cdot \int_{[0, 1]^{2n}} \log\bigg|\sum_{j = 1}^{2n} \eta_j s_j\bigg| \diff{\hat{\s}_n} \diff{\hat{\etab}_n}\biggr\}.
\end{split}
\end{align}
Here,  $c_n$ is given by \eqref{Tnintdef}, $\delta$ is the delta-distribution, $\chi$ is the cutoff function in \eqref{defchi}, $\hat{\etab}_n = (\eta_1, \eta_2, \dotsc, \eta_{2n})$, and $\hat{\s}_n = (s_1, \dotsc, s_{2n})$. The operators $T_{B^{\log}[\vp]}$ and $T_{B^{0}[\vp]}$ are self-adjoint and their symbols satisfy the estimates
\begin{equation}
\label{B-est}
\begin{aligned}
\|B^{\log}[\vp]\|_{\Mc_{(j, 2)}} &\lesssim \sum_{n = 1}^\infty C(n,s) |c_n| \|\varphi_x\|_{W^{j, \infty}}^{2n},\quad j=2,3,\\
\|B^{0}[\vp]\|_{\Mc_{(2, 2)}} &\lesssim \sum_{n = 1}^\infty  C(n,s) |c_n| \Big(\|L\varphi_x\|_{W^{2, \infty}}^{2n} + \|\varphi_x\|_{W^{2, \infty}}^{2n}\Big),
\end{aligned}
\end{equation}
while the remainder term $\Rc$ satisfies
\begin{equation}
\|\Rc(\vp)\|_{H^s} \lesssim \|\vp\|_{H^s} \left\{\sum_{n = 1}^\infty  C(n, s) |c_n| \Big(\|\varphi_x\|_{W^{2, \infty}}^{2n} + \|L \varphi_x\|_{W^{2, \infty}}^{2n}\Big)\right\},
\label{Rcest}
\end{equation}
where the constants $C(n,s)$ have at most exponential growth in $n$.
\end{proposition}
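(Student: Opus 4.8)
The starting point is the multilinear expansion \eqref{Tn-sqg}, which exhibits the nonlinear flux in \eqref{full-sqg} as $-\sum_{n\ge 1}\tfrac{c_n}{2n+1}\,\px M_{\Tb_n}(\vp,\dotsc,\vp)$ with $2n+1$ copies of $\vp$ and $\Tb_n$ as in \eqref{Tnintdef}. The plan is to para-linearize each term $M_{\Tb_n}(\vp,\dotsc,\vp)$ by selecting one factor of $\vp$ to be the high-frequency argument of a Weyl para-product, with the remaining $2n$ factors, frequency-truncated, forming the symbol. First I would use the evaluation of $\Tb_n$ from the appendix: writing $1-e^{i\eta_j\zeta}=-i\eta_j\zeta\int_0^1 e^{i\eta_j\zeta s_j}\diff s_j$ for each factor and integrating in $\zeta$ expresses $\Tb_n(\etab_n)$ as $2(-1)^n\prod_{j=1}^{2n+1}\eta_j$ times the (suitably regularized) integral of $(\sum_j\eta_j s_j)^{-1}$ over the unit cube; integrating out the simplex variable $s_{2n+1}$ produces $\log|\eta_{2n+1}+\sum_{j\le 2n}\eta_j s_j|-\log|\sum_{j\le 2n}\eta_j s_j|$, and the affine change $s_j\mapsto 1-s_j$ turns the first logarithm into $\log|\xi-\sum_{j\le 2n}\eta_j s_j|$ with $\xi=\eta_1+\dotsb+\eta_{2n+1}$. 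Using the symmetry of $\Tb_n$ and $i\xi=\sum_k i\eta_k$, I would move the outer derivative onto the distinguished factor; by symmetry the resulting $2n+1$ integrals coincide, which cancels the prefactor $\tfrac1{2n+1}$ and accounts for the constant $2c_n$ in \eqref{defBlog}.

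\textbf{Insertion of cutoffs and the three pieces.} Next I would insert $1=\prod_{j\le 2n}\chi\big((2n+1)\eta_j/\xi\big)+\big[1-\prod_{j\le 2n}\chi\big((2n+1)\eta_j/\xi\big)\big]$. On the support of the product of cutoffs, $|\eta_j|\le|\xi|/(10(2n+1))$ for $j\le 2n$, hence $|\xi-\eta_{2n+1}|=|\sum_{j\le 2n}\eta_j|\le|\xi|/10$ and $|\xi+\eta_{2n+1}|\ge 19|\xi|/10$, so $|\xi-\eta_{2n+1}|/|\xi+\eta_{2n+1}|\le 1/19<3/40$ and the Weyl cutoff $\chi(|\xi-\eta_{2n+1}|/|\xi+\eta_{2n+1}|)$ in \eqref{weyldef} is identically $1$ on this ``main'' region. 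There the expression is a genuine Weyl para-product acting on $\vp$, and since $|(\sum_{j\le 2n}\eta_j s_j)/\xi|<1/5$ I may split $\log|\xi-\sum_{j\le 2n}\eta_j s_j|=\log|\xi|+\log\big|1-(\sum_{j\le 2n}\eta_j s_j)/\xi\big|$. The $\log|\xi|$ piece is the symbol of $L$ on the output frequency and assembles into the term $L(T_{B^{\log}[\vp]}\vp)_x$ of \eqref{fsqgeq}; the $-\log|\sum_{j\le 2n}\eta_j s_j|$ piece assembles into $\px T_{B^0[\vp]}\vp$; and the remainder $\Rc$ collects the complement of the main region, the Taylor tail $\log|1-(\sum\eta_j s_j)/\xi|$, the replacement of $\xi$ by the Weyl midpoint $(\xi+\eta_{2n+1})/2$ in the cutoff arguments, and the commutator terms generated when $\px$ and $L$ are moved past the para-products (controlled via Lemmas~\ref{Commu} and \ref{lem-DsT}).

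\textbf{The estimates.} On the main region $B^{\log}_n[\vp](\cdot,\xi)$ is $-2c_n$ times the $2n$-th power of $\F^{-1}\big[\widehat{\vp_x}(\eta)\chi((2n+1)\eta/\xi)\big]$, a Fourier truncation of $\vp_x$ with $L^\infty$-norm $\lesssim\|\F^{-1}\chi\|_{L^1}\|\vp_x\|_{L^\infty}$ uniformly in $\xi$; the $\px$-derivative in the $\Mc_{(1,1)}$-norm falls on one factor (costing one further $x$-derivative of $\vp_x$) while the $|\xi|\partial_\xi$-derivative acts only on the cutoffs, where $|\xi|^{-1}|\eta_j|\lesssim 1$, giving $\|B^{\log}_n[\vp]\|_{\Mc_{(1,1)}}\lesssim C(n,s)|c_n|\,\|\vp_x\|_{W^{2,\infty}}^{2n}$ with $C(n,s)$ of at most exponential growth (from the $2n$-fold Leibniz rule). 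For $B^0_n$ the extra factor $\int_{[0,1]^{2n}}\log|\sum_{j\le 2n}\eta_j s_j|\diff\s$ behaves, modulo contributions bounded on $L^\infty$, like $\log|\px|$ applied to that product, so $L\vp_x$ as well as $\vp_x$ controls its norm; the series in $n$ then converges since $|c_n|=O(n^{-1/2})$ and $\|\vp_x\|_{W^{2,\infty}}+\|L\vp_x\|_{W^{2,\infty}}$ is small. Real-valuedness of the symbols (even powers of the real $\vp_x$, even cutoffs, real $c_n$) gives the self-adjointness. Finally, for $\Rc$ I would write each contribution as $M_\kappa(\vp,\dotsc,\vp)$, absorb frequency factors into the $\vp$'s so that low slots become $x$-derivatives of $\vp$, and apply Lemma~\ref{multilinear} with one factor measured in $H^s$ and the remaining $2n$ in $L^\infty$-based norms, combined with a Littlewood--Paley decomposition and $\|P_k\vp\|_{L^\infty}\lesssim 2^{-k}\|\vp_x\|_{L^\infty}$ to trade surplus high frequencies for derivatives; summing in $n$ as before gives \eqref{Rcest}.

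\textbf{Main obstacle.} The crux is the bound \eqref{Rcest}: the decomposition must be arranged so that every piece sent to $\Rc$ recovers all of its derivatives --- which is exactly why the $L(T_{B^{\log}[\vp]}\vp)$ term is isolated, since it alone carries the logarithmic loss --- while keeping every constant at most exponential in $n$ so that the series defining $B^{\log}[\vp]$, $B^0[\vp]$ and $\Rc$ converge for small data. Ensuring the $\px$- and $L$-commutator errors land in $H^s$ without an $\varepsilon$-loss (one uses the gain of a derivative in \eqref{LTa} and \eqref{xL} rather than naively distributing $L$), and handling the high--high interactions in the complement region uniformly in $n$ --- where the gain must come either from a low output frequency or from trading the extra high frequencies against powers of $\|\vp_x\|_{W^{2,\infty}}$ --- is the technical heart of the argument.
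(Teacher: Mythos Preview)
Your proposal is essentially correct and follows the same route as the paper. Two minor deviations worth noting: (i) the paper first uses the symmetry of $\Tb_n$ to restrict to $|\eta_{2n+1}|=\max_j|\eta_j|$ (this is what eats the $\tfrac{1}{2n+1}$, not moving the outer derivative as you suggest) and then inserts cutoffs $\chi\big((2n+1)\eta_j/\eta_{2n+1}\big)$ relative to the largest \emph{input} frequency rather than $\xi$; the discrepancy with the Weyl symbol (evaluated at $(\xi+\eta_{2n+1})/2$) is then shown to be a remainder. (ii) Correspondingly the paper extracts $\log|\eta_{2n+1}|$ so that $L$ acts on the high-frequency input, and only afterwards invokes \eqref{LTa} to commute $L$ to the outside as in \eqref{fsqgeq}; your route extracts $\log|\xi|$ directly. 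These are equivalent modulo the same commutator estimate.

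Where your sketch is vaguest --- the complement region --- the paper is most concrete, and you would benefit from its device: rather than a multilinear-symbol argument via Lemma~\ref{multilinear}, the paper bounds $\Tb_n$ \emph{pointwise} by splitting the $\zeta$-integral in \eqref{Tnintdef} at the scales $2/|\eta_{j_k}|$ (ordered by size), getting $|\Tb_n(\etab_n)|\lesssim C(n)\prod_{k\le 2n}|\eta_{j_k}|$ whenever two of the largest frequencies are comparable. This immediately gives the $H^s$-bound on that piece with constants whose exponential growth in $n$ is transparent, and avoids having to control $S^\infty$-norms of $(2n{+}1)$-linear symbols uniformly in $n$.
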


\begin{proof}
We define
\begin{align*}
f_n(x) &= \int_{\R^{2n+1}} \Tb_n(\etab_n) \hat\varphi(\eta_1) \hat\varphi(\eta_2) \dotsm \hat\varphi(\eta_{2n+1}) e^{i (\eta_1 + \eta_2 + \dotsb + \eta_{2n + 1}) x}\diff{\etab_n}.
\end{align*}
In view of \eqref{Tn-sqg} and the commutator estimate \eqref{LTa}, we only need to prove that
\[
- \sum_{n = 1}^\infty \frac{c_n}{2n + 1} \px f_n(x)=\partial_x T_{B^0[\vp]}\vp+\px[(T_{B^{\log}[\vp]})L\vp]+\Rc,
\]
where $\Rc$ satisfies \eqref{Rcest}, and to do this it suffices to prove for each $n$ that
\begin{align*}
 &\frac{c_n}{2n + 1} \px f_n(x)= - \partial_x T_{B^0_n[\vp]}\vp - \px[(T_{B^{\log}_n[\vp]})L\vp]+\Rc_n,
\\
&\|\Rc_n\|_{H^s}\lesssim C(n,s)|c_n|  \Big(\|\varphi_x\|_{W^{3, \infty}}^{2n} + \|L \varphi_x\|_{W^{3, \infty}}^{2n}\Big) \|\vp\|_{H^s}.
\end{align*}

By symmetry, we can assume that $|\eta_{2n+1}|$ is the largest frequency in the expression of $f_n$. Then
\begin{align}
\label{intprod}
\begin{split}
 &\frac{c_n}{2n + 1} \px f_n(x)=c_n\px \int\limits_{\substack{|\eta_{2n+1}|\geq |\eta_j|\\ \text{ for all } j=1, \dotsc, 2n}} \Tb_n(\etab_n) \hat\varphi(\eta_1) \hat\varphi(\eta_2) \dotsm \hat\varphi(\eta_{2n+1}) e^{i (\eta_1 + \eta_2 + \dotsb + \eta_{2n + 1}) x}\diff{\etab_n}\\
 &\qquad=c_n\px \int_{\R}\int\limits_{\substack{ |\eta_j|\leq |\eta_{2n+1}|\\ \text{ for all } j=1, \dotsc, 2n}} \Tb_n(\etab_n) \hat\varphi(\eta_1) \hat\varphi(\eta_2) \dotsm \hat\varphi(\eta_{2n}) e^{i (\eta_1 + \eta_2 + \dotsb + \eta_{2n }) x}\diff{\hat\etab_n}
 \hat\varphi(\eta_{2n+1})e^{ix\eta_{2n+1 } } \diff{\eta_{2n+1}}.
\end{split}
\end{align}

To proceed, we split the above integral into two parts corresponding to the lower and higher frequencies of
$\eta_{2n+1}$.
Define $\mathbf U_n(\etab_n)= \Tb_n(\etab_n) \chi(\eta_{2n+1})$, and $\boldsymbol{\Lambda}_n(\etab_n) = \Tb_n(\etab_n)-\mathbf U_n(\etab_n)$.
For the lower frequency part, we have
\begin{align*}
&\px \int_{\R}\int\limits_{\substack{ |\eta_j|\leq |\eta_{2n+1}|\\ \text{ for all } j=1, \dotsc, 2n}} \mathbf{U}_n(\etab_n) \hat\varphi(\eta_1) \hat\varphi(\eta_2) \dotsm \hat\varphi(\eta_{2n}) e^{i (\eta_1 + \eta_2 + \dotsb + \eta_{2n }) x}\diff{\hat\etab_n}
 \hat\varphi(\eta_{2n+1})e^{ix\eta_{2n+1 } } \diff{\eta_{2n+1}}\\
 =~&\px \int_{\R}\int\limits_{\substack{ |\eta_j|\leq |\eta_{2n+1}|\\ \text{ for all } j=1, \dotsc, 2n}} \chi(\eta_{2n+1})
  \int_\R \frac{\prod_{j = 1}^{2n+1} (1 - e^{ i \eta_j \zeta})}{|\zeta|^{2n+1}} \diff{\zeta}\hat\varphi(\eta_1) \hat\varphi(\eta_2) \dotsm \hat\varphi(\eta_{2n}) \\
&\hspace{8cm}  \cdot e^{i (\eta_1 + \eta_2 + \dotsb + \eta_{2n }) x}\diff{\hat\etab_n}
 \hat\varphi(\eta_{2n+1})e^{ix\eta_{2n+1 } } \diff{\eta_{2n+1}}\\
 =~&\px \int_{\R} \mathbf{A}_n(x, \eta_{2n+1})\hat\varphi(\eta_{2n+1})e^{ix\eta_{2n+1 } } \diff{\eta_{2n+1}},
\end{align*}
where the symbol $\mathbf{A}_n$ is defined by
\[
\mathbf{A}_n(x,\eta_{2n+1})=\int\limits_{\substack{ |\eta_j|\leq |\eta_{2n+1}|\\ \text{ for all } j=1, \dotsc, 2n}}\chi(\eta_{2n+1})
  \int_\R \frac{\prod_{j = 1}^{2n+1} (1 - e^{ i \eta_j \zeta})}{|\zeta|^{2n+1}} \diff{\zeta}~\hat\varphi(\eta_1) \hat\varphi(\eta_2) \dotsm \hat\varphi(\eta_{2n}) e^{i (\eta_1 + \eta_2 + \dotsb + \eta_{2n }) x}\diff{\hat\etab_n}.
\]

Using an $L^2$-boundedness theorem for pseudo-differential operators (Theorem 1.1 of \cite{Bou99}), Lemma \ref{multilinear}, and the compact support of the cutoff functions, we obtain
\begin{align*}
\left\|\px^n \int_{\R} \mathbf{A}_n(x, \eta_{2n+1})\hat\varphi(\eta_{2n+1})e^{ix\eta_{2n+1 } } \diff{\eta_{2n+1}}\right\|_{L^2}\lesssim \sum\limits_{i, j\leq 1}\|\partial_x^i\partial_{\eta_{2n+1}}^j\mathbf A_n\|_{L^\infty}\|\vp\|_{L^2}\\
\lesssim  \left\|\frac{\mathbb{I}_n(\hat{\etab}_n,\eta_{2n+1}) \chi(\eta_{2n+1})}{\prod_{j=1}^{2n} (1+|\eta_j|)} \prod_{j=1}^{2n} (1+|\eta_j|)
  \int_\R \frac{\prod_{j = 1}^{2n+1} (1 - e^{ i \eta_j \zeta})}{|\zeta|^{2n+1}} \diff{\zeta}\right\|_{S^\infty_{\hat{\etab}_n}L^\infty_{\eta_{2n+1}}}\|\vp\|_{W^{1,\infty}}^{2n}\|\vp\|_{L^2},
\end{align*}
where
\begin{equation}
\mathbb{I}_n(\hat{\etab}_n,\eta_{2n+1}) = \begin{cases}1 & \text{if $|\eta_j|\leq |\eta_{2n+1}|$ for $j=1, \dotsc, 2n$},\\
0 & \text{otherwise}.\end{cases}
\label{defIn}
\end{equation}
Thus, the lower frequency part satisfies the estimate \eqref{Rcest}, and this term can be absorbed in $\Rc$ in \eqref{fsqgeq}.

Next, we consider the higher frequency part in\eqref{intprod}, which we write as
\begin{align}
\begin{split}
&c_n\px \int_{\R}\int\limits_{\substack{ |\eta_j|\leq |\eta_{2n+1}|\\ \text{ for all } j=1, \dotsc, 2n}} \boldsymbol{\Lambda}_n(\etab_n) \prod\limits_{j=1}^{2n}\left\{\chi\left(\frac{(2n+1)\eta_j}{\eta_{2n+1}}\right) + \left[1 - \chi\left(\frac{(2n+1)\eta_j}{\eta_{2n+1}}\right)\right]\right\}\hat\varphi(\eta_j)
\\
&\hspace{2.5in} \cdot  e^{i (\eta_1 + \eta_2 + \dotsb + \eta_{2n }) x}\diff{\hat\etab_n}\hat\varphi(\eta_{2n+1})e^{ix\eta_{2n+1 } } \diff{\eta_{2n+1}}.
\end{split}
\label{hfint}
\end{align}
We expand the product in the above integral, and consider two cases depending on whether a term in the expansion
contains only factors of $\chi$ or contains at least one factor $1-\chi$. In the first case, the frequency $\eta_{2n+1}$ is much larger than
all of the other frequencies, and we can extract a logarithmic derivative acting on the highest frequency; in the second case at least one other frequency is comparable to  $\eta_{2n+1}$, and we get a remainder term by distributing derivatives on comparable frequencies.

{\bf Case I.} When we take only factors of $\chi$ in the expansion of the product in \eqref{hfint}, we get the integral
\begin{align}\label{expandfn1}
c_n\px \int_{\R}\int\limits_{\substack{ |\eta_j|\leq |\eta_{2n+1}|\\ \text{ for all } j=1, \dotsc, 2n}} \boldsymbol{\Lambda}_n(\etab_n) \prod\limits_{j=1}^{2n}\chi\left(\frac{(2n+1)\eta_j}{\eta_{2n+1}}\right)\hat\varphi(\eta_j) e^{i (\eta_1 + \eta_2 + \dotsb + \eta_{2n }) x}\diff{\hat\etab_n}\hat\varphi(\eta_{2n+1})e^{ix\eta_{2n+1 } } \diff{\eta_{2n+1}}.
\end{align}

From \eqref{Tnintdef}, we can write $\boldsymbol{\Lambda}_n = [1-\chi(\eta_{2n+1})]  \Tb_n$ as an integral with respect to
$\s_n = (s_1, s_2, \dotsc, s_{2n + 1})$,
\[
\begin{aligned}
\boldsymbol{\Lambda}_n(\etab_n) &= -(1-\chi(\eta_{2n+1})) \int_\R \sgn{\zeta} \int_{[0, 1]^{2n + 1}} \prod_{j = 1}^{2n + 1} i \eta_j e^{i \eta_j s_j \zeta} \diff{\s_n} \diff{\zeta}\\
&= 2 (-1)^n (1-\chi(\eta_{2n+1}))\bigg(\prod_{j = 1}^{2n + 1} \eta_j\bigg) \int_{[0, 1]^{2n + 1}} \frac{1}{\sum_{j = 1}^{2n + 1} \eta_j s_j} \diff{\s_n}\\
&= 2(1-\chi(\eta_{2n+1})) \bigg(\prod_{j = 1}^{2n} (i \eta_j)\bigg) \int_{[0, 1]^{2n}} \log\bigg|1 + \sum_{j = 1}^{2n} \frac{\eta_j}{\eta_{2n + 1}} s_j\bigg| - \log\bigg|\sum_{j = 1}^{2n} \frac{\eta_j}{\eta_{2n + 1}} s_j\bigg| \diff{\hat{\s}_n}\\
&= 2(1-\chi(\eta_{2n+1})) \log|\eta_{2n + 1}| \cdot \prod_{j = 1}^{2n} (i \eta_j) - 2 \bigg(\prod_{j = 1}^{2n} (i \eta_j)\bigg) \int_{[0, 1]^{2n}} \log\bigg|\sum_{j = 1}^{2n} \eta_j s_j\bigg| \diff{\hat{\s}_n}\\
&\qquad +(1-\chi(\eta_{2n+1}))\bigg(\prod_{j = 1}^{2n} (i \eta_j)\bigg) \int_{[0, 1]^{2n}} \log\bigg|1 + \sum_{j = 1}^{2n} \frac{\eta_j}{\eta_{2n + 1}} s_j\bigg|\diff{\hat{\s}_n}.
\end{aligned}
\]
Substitution of this expression into \eqref{expandfn1} gives the following three terms
\begin{align}\label{Tlog}
&c_n\px \int_{\R}\int\limits_{\substack{ |\eta_j|\leq |\eta_{2n+1}|\\ \text{ for all } j=1, \dotsc, 2n}} \boldsymbol{\Lambda}_n^{\log}(\etab_n) \prod\limits_{j=1}^{2n}\chi\left(\frac{(2n+1)\eta_j}{\eta_{2n+1}}\right)\hat\varphi(\eta_j) e^{i (\eta_1 + \eta_2 + \dotsb + \eta_{2n }) x}\diff{\hat\etab_n}\hat\varphi(\eta_{2n+1})e^{ix\eta_{2n+1 } } \diff{\eta_{2n+1}},
\\
\label{T0}
&c_n\px \int_{\R}\int\limits_{\substack{ |\eta_j|\leq |\eta_{2n+1}|\\ \text{ for all } j=1, \dotsc, 2n}} \boldsymbol{\Lambda}_n^{0}(\etab_n) \prod\limits_{j=1}^{2n}\chi\left(\frac{(2n+1)\eta_j}{\eta_{2n+1}}\right)\hat\varphi(\eta_j) e^{i (\eta_1 + \eta_2 + \dotsb + \eta_{2n }) x}\diff{\hat\etab_n}\hat\varphi(\eta_{2n+1})e^{ix\eta_{2n+1 } } \diff{\eta_{2n+1}},
\\
\label{remainder1}
&c_n\px \int_{\R}\int\limits_{\substack{ |\eta_j|\leq |\eta_{2n+1}|\\ \text{ for all } j=1, \dotsc, 2n}} \boldsymbol{\Lambda}_n^{\leq -1}(\etab_n)(\etab_n) \prod\limits_{j=1}^{2n}\chi\left(\frac{(2n+1)\eta_j}{\eta_{2n+1}}\right)\hat\varphi(\eta_j) e^{i (\eta_1 + \eta_2 + \dotsb + \eta_{2n }) x}\diff{\hat\etab_n}\hat\varphi(\eta_{2n+1})e^{ix\eta_{2n+1 } } \diff{\eta_{2n+1}},
\end{align}
where
\[
\begin{aligned}
\boldsymbol{\Lambda}_n^{\log}(\etab_n) &= 2(1-\chi(\eta_{2n+1})) \log|\eta_{2n + 1}| \cdot \prod_{j = 1}^{2n} (i \eta_j),\\
\boldsymbol{\Lambda}_n^{0}(\etab_n) &= - 2(1-\chi(\eta_{2n+1})) \bigg(\prod_{j = 1}^{2n} (i \eta_j)\bigg) \int_{[0, 1]^{2n}} \log\bigg|\sum_{j = 1}^{2n} \eta_j s_j\bigg| \diff{\hat{\s}_n},
\\
\boldsymbol{\Lambda}_n^{\leq -1}(\etab_n) &= 2(1-\chi(\eta_{2n+1})) \bigg(\prod_{j = 1}^{2n} (i \eta_j)\bigg) \int_{[0, 1]^{2n}} \log\bigg|1 + \sum_{j = 1}^{2n} \frac{\eta_j}{\eta_{2n + 1}} s_j\bigg| \diff{\hat{\s}_n}.
\end{aligned}
\]

We claim that the terms \eqref{Tlog} and \eqref{T0} can be rewritten as
\begin{equation}
- \partial_xT_{B_n^{\log}[\vp]}\log_+|\px|\vp+\Rc_1 \quad \text{and}\quad -\partial_x T_{B^0_n[\vp]}\vp+\Rc_2,
\label{TlT0}
\end{equation}
where $\Rc_1$ and $\Rc_2$ satisfy the estimate \eqref{Rcest}.
Indeed,
\begin{align*}
\F\left[\partial_xT_{B_n^{\log}[\vp]}\log_+|\px|\vp\right](\xi)&= - 2 c_n i \xi \int_{\R} \chi\left(\frac{|\xi-\eta|^2}{1+|\xi+\eta|^2}\right) \log_+|\eta| \int_{\R^{2n}} \delta\bigg(\xi-\eta - \sum_{j = 1}^{2n} \eta_j\bigg)
\\
&\hskip1in \cdot\prod_{j = 1}^{2n} \left[i \eta_j \hat{\vp}(\eta_j) \chi\Big(\frac{2(2n + 1)\eta_j}{\xi+\eta}\Big)\right] \diff{\hat{\etab}_n} \hat \vp(\eta)\diff \eta,
\end{align*}
while the Fourier transform of \eqref{Tlog} is
\begin{align*}
&2 c_n i \xi \int_{\R}\int\limits_{\substack{ |\eta_j|\leq |\eta_{2n+1}|\\ \text{ for all } j=1, \dotsc, 2n}} \delta\left(\xi-\sum\limits_{j=1}^{2n+1}\eta_j\right) (1-\chi(\eta_{2n+1})) \log|\eta_{2n + 1}|
\\
&\hskip1in\cdot  \prod\limits_{j=1}^{2n}\left[\chi\left(\frac{(2n+1)\eta_j}{\eta_{2n+1}}\right)(i \eta_j) \hat\varphi(\eta_j) \right] \diff{\hat\etab_n}\hat\varphi(\eta_{2n+1})\diff{\eta_{2n+1}}.
\end{align*}
The difference of the above two integrals is
\begin{align}\label{error}
\begin{split}
& 2 c_n i \xi \int_{\R^{2n+1}}\mathbf \delta\left(\xi-\sum\limits_{j=1}^{2n+1}\eta_j\right)  \log|\eta_{2n + 1}|
\cdot \biggl[ \mathbb{I}_n(\hat\etab_n,\eta_{2n+1}) \prod\limits_{j=1}^{2n}\chi\left(\frac{(2n+1)\eta_j}{\eta_{2n+1}}\right)(i \eta_j) \hat\varphi(\eta_j)(1-\chi(\eta_{2n+1}))
\\
&\hskip1in- \chi\left(\frac{|\xi-\eta_{2n+1}|^2}{1+|\xi+\eta_{2n+1}|^2}\right)\mathbf{1}_{|\eta_{2n+1}|>1}\prod_{j = 1}^{2n} \bigg(i \eta_j \hat{\vp}(\eta_j) \chi\Big(\frac{2(2n + 1)\eta_j}{\xi+\eta_{2n+1}}\Big)\bigg)\biggr]\diff{\hat\etab_n}\hat\varphi(\eta_{2n+1})\diff{\eta_{2n+1}},
\end{split}
\end{align}
where $\mathbb{I}_n$ is given by \eqref{defIn}.

When $\etab_n$ satisfies
\begin{equation}\label{etancon}
|\eta_j|\leq \frac1{40} \frac1{2n+1}|\eta_{2n+1}|\qquad \text{for $j=1,2,\dotsc, 2n$},
\end{equation}
we have $\mathbb I_n= 1$ and $\chi\left(\frac{(2n+1)\eta_j}{\eta_{2n+1}}\right)=1$. In addition, since $\xi=\sum_{j=1}^{2n+1}\eta_j$, we have
\begin{align*}
&\frac{|\xi-\eta_{2n+1}|^2}{1+|\xi+\eta_{2n+1}|^2}\leq\frac{|\xi-\eta_{2n+1}|}{|\xi+\eta_{2n+1}|}
=\frac{\left|\sum\limits_{j=1}^{2n}\eta_{j}\right|}{\left|\sum\limits_{j=1}^{2n}\eta_{j}+2\eta_{2n+1}\right|}\leq \frac{\frac1{40} |\eta_{2n+1}|}{(2-\frac1{40})|\eta_{2n+1}|}=\frac1{79}<\frac3{40},
\\
&\frac{2(2n + 1)|\eta_j|}{|\xi+\eta_{2n+1}|}\leq\frac{\frac1{20}|\eta_{2n+1}|}{(2-\frac1{40})|\eta_{2n+1}|}=\frac2{79}<\frac{3}{40},
\end{align*}
so
\[
\chi\left(\frac{|\xi-\eta_{2n+1}|^2}{1+|\xi+\eta_{2n+1}|^2}\right)=1,\qquad
\chi\left(\frac{2(2n + 1)\eta_j}{\xi+\eta_{2n+1}}\right)=1.
\]

Therefore the integrand of \eqref{error} is supported outside the set \eqref{etancon},
and there exists $j_1\in \{1, \dotsc, 2n\}$, such that
$|\eta_{j_1}|> \frac1{40} \frac1{2n+1}|\eta_{2n+1}|$.
Since $|\eta_{2n+1}|$ is the largest frequency, we see that $|\eta_{j_1}|$ and $|\eta_{2n+1}|$ are comparable in the error term. Therefore, the $H^s$-norm of \eqref{error} is bounded by
\[
\|\vp\|_{H^s}C(n,s)|c_n|  \Big(\|\varphi_x\|_{W^{2, \infty}}^{2n} + \|L \varphi_x\|_{W^{2, \infty}}^{2n}\Big).
\]
It follows that \eqref{Tlog} can be written as in \eqref{TlT0}. A similar calculation applies to \eqref{T0}.

Next, we estimate the symbols $B^{\log}_n[\vp]$ and $B^{0}_n[\vp]$. First, we notice that they are real-valued, so that $T_{B^{\log}_n[\vp]}$ and $T_{B^{0}_n[\vp]}$ are self-adjoint.
Again, without loss of generality, we assume $|\eta_{2n}| = \max_{1 \leq j \leq 2n} |\eta_{j}|$ and observe that
\[
\begin{aligned}
& \int_{[0, 1]^{2n}} \log\bigg|\sum_{j = 1}^{2n} \eta_j s_j\bigg| \diff{\hat{\s}_n}\\
= ~~& \log|\eta_{2n}| + \int_{[0, 1]^{2n - 1}} \bigg\{ \bigg(\sum_{j = 1}^{2n - 1} \frac{\eta_j}{\eta_{2n}} s_j\bigg) \log\bigg|1 + \frac{1}{\sum_{j = 1}^{2n - 1} \frac{\eta_j}{\eta_{2n}} s_j}\bigg| + \log\bigg|1 + \frac{1}{\sum_{j = 1}^{2n - 1} \frac{\eta_j}{\eta_{2n}} s_j}\bigg| - 1 \bigg\}\diff{\s_{n - 1}}\\
= ~~& \log|\eta_{2n}| + O(1).
\end{aligned}
\]
Thus, using Young's inequality, we obtain from \eqref{defBlog} the estimate \eqref{B-est},
where the constants $C(n,s)$ have at most exponential growth in $n$.

To estimate the third term \eqref{remainder1}, we observe that on the support of the functions $\chi\left(\frac{(2n+1)\eta_j}{\eta_{2n+1}}\right)$, we have
\[
\frac{|\eta_j|}{|\eta_{2n+1}|}\leq \frac1{10(2n+1)}.
\]
Since $s_j\in [0,1]$, a Taylor expansion gives
\[
\left|\boldsymbol{\Lambda}_n^{\leq -1}(\etab_n) \right|\lesssim   \frac{\left[\prod_{j = 1}^{2n} |\eta_j|\right]\left[\sum_{j = 1}^{2n} |\eta_j|\right]}{|\eta_{2n + 1}|}.
\]
Therefore the $H^s$-norm of \eqref{remainder1} is bounded by $C(n,s)|c_n|\|\vp\|_{H^s}\|\vp_x\|_{W^{2,\infty}}^{2n}$, where $C(n,s)$ has at most exponential growth in $n$.

{\bf Case II.} When there is at least one factor of the form $1-\chi$ in the expansion of the product in the integral \eqref{hfint},
we get a term of the form
\begin{align}\label{expandfn2}
\begin{split}
f_{(n)}(x) = c_n\px \int_{\R}\int\limits_{\substack{ |\eta_j|\leq |\eta_{2n+1}|\\ \text{ for all } j=1, \dotsc, 2n}} \boldsymbol{\Lambda}_n(\etab_n) \prod\limits_{k=1}^{\ell}\left[1-\chi\left(\frac{(2n+1)\eta_{j_k}}{\eta_{2n+1}}\right)\right] \prod\limits_{k=\ell+1}^{2n}\chi\left(\frac{(2n+1)\eta_{j_k}}{\eta_{2n+1}}\right)\\
\cdot \bigg(\prod_{j = 1}^{2 n} \hat\varphi(\eta_j)\bigg) e^{i (\eta_1 + \eta_2 + \dotsb + \eta_{2n }) x}\diff{\hat\etab_n}\hat\varphi(\eta_{2n+1})e^{ix\eta_{2n+1 } } \diff{\eta_{2n+1}},
\end{split}
\end{align}
where $1\le \ell \le 2n$ is an integer, and $\{j_k : k=1,\dotsc, 2n\}$ is a permutation of $\{1, \dotsc, 2n\}$.

$1-\chi\left(\frac{(2n+1)\eta_{j_1}}{\eta_{2n+1}}\right)$ is compactly supported on
\[
\frac{|\eta_{j_1}|}{|\eta_{2n+1}|}\geq \frac3{40(2n+1)}.
\]
By assumption, $\eta_{2n+1}$ has the largest absolute value, so
\[
\frac{3}{40(2n+1)} |\eta_{2n+1}| \leq |\eta_{j_1}|\leq |\eta_{2n+1}|,
\]
meaning that the frequencies $|\eta_{j_1}|$ and $|\eta_{2n+1}|$ are comparable.

Without loss of generality, we assume that $|\eta_{j_1}| \leq |\eta_{j_2}| \leq \dotsb \leq |\eta_{j_{2n}}| \leq |\eta_{2n + 1}|$, define $\eta_{j_{2n + 1}} = \eta_{2n + 1}$, and, using \eqref{Tnintdef}, split the integral for $\boldsymbol{\Lambda}_n$ into three parts:
\begin{align*}
\boldsymbol{\Lambda}_n(\etab_n) = \boldsymbol{\Lambda}_n^{low}(\etab_n) + \sum_{k = 1}^{2n} \boldsymbol{\Lambda}_n^{med, (k)}(\etab_n) + \boldsymbol{\Lambda}_n^{high}(\etab_n),
\end{align*}
where
\begin{align}
\boldsymbol{\Lambda}_n^{low}(\etab_n) & = \left[1-\chi(\eta_{2n+1})\right]\int_{|\eta_{2n+1} \zeta| < 2} \frac{\prod_{j = 1}^{2n + 1} \left(1 - e^{i \eta_j \zeta}\right)}{\zeta^{2 n + 1}}  \sgn{\zeta} \diff{\zeta}, \label{Tn-low}\\
\boldsymbol{\Lambda}_n^{med, (k)}(\etab_n) & = \left[1-\chi(\eta_{2n+1})\right]\int_{{2}/{|\eta_{j_{k + 1}}|} \le |\zeta| \le {2}/{|\eta_{j_k}|}} \frac{\prod_{j = 1}^{2n + 1} \left(1 - e^{i \eta_j \zeta}\right)}{\zeta^{2 n + 1}}  \sgn{\zeta} \diff{\zeta}, \label{Tn-med}\\
\boldsymbol{\Lambda}_n^{high}(\etab_n) & = \left[1-\chi(\eta_{2n+1})\right]\int_{|\eta_{j_1} \zeta| > 2} \frac{\prod_{j = 1}^{2n + 1} \left(1 - e^{i \eta_j \zeta}\right)}{\zeta^{2 n + 1}}  \sgn{\zeta} \diff{\zeta}. \label{Tn-high}
\end{align}

To estimate \eqref{Tn-low}, we notice that
\begin{align*}
|\boldsymbol{\Lambda}_n^{low}(\etab_n)| & \le \prod_{k = 1}^{2n+1} |\eta_{k}| \cdot \int_{|\eta_{2n+1} \zeta| < 2} \bigg(\prod_{k = 1}^{2n+1} \frac{|1 - e^{i \eta_{j_k} \zeta}|}{|\eta_{j_k} \zeta|}\bigg)  \diff{\zeta} \leq C(n, s) \bigg(\prod_{k = 1}^{2n} |\eta_{j_k}| \bigg).
\end{align*}

For each $1 \le k \le 2n $, we consider two cases. If $k \neq 2n$, we estimate \eqref{Tn-med} as
\begin{align*}
|\boldsymbol{\Lambda}_n^{med, (k)}(\etab_n)| & \leq \prod_{\ell = 1}^{k} |\eta_{j_\ell}| \cdot \int_{{2}/{|\eta_{j_{k + 1}}|} \le |\zeta| \le {2}/{|\eta_{j_k}|}} \bigg(\prod_{\ell = 1}^k \frac{|1 - e^{i \eta_{j_\ell} \zeta}|}{|\eta_{j_\ell} \zeta|}\bigg) \cdot \frac{\prod_{\ell = k + 1}^{2n + 1} |1 - e^{i \eta_{j_\ell} \zeta}|}{|\zeta|^{2n + 1 - k}}  \diff{\zeta}\\
& \leq 2^{2n + 1 - k} \prod_{\ell = 1}^{k} |\eta_{j_\ell}| \cdot \int_{{2}/{|\eta_{j_{k + 1}}|} \le |\zeta| \le {2}/{|\eta_{j_k}|}} |\zeta|^{-2n - 1 + k } \diff{\zeta}\\
& \leq \frac{2}{2n  - k } \left(|\eta_{j_k}|^{2n  - k } + |\eta_{j_{k + 1}}|^{2n  - k }\right) \prod_{\ell = 1}^{k} |\eta_{j_\ell}|\\
& \leq 2  \bigg(\prod_{k = 1}^{2n} |\eta_{j_k}|\bigg).
\end{align*}

If $k = 2n$, we have
\begin{align*}
|\boldsymbol{\Lambda}_n^{med, (k)}(\etab_n)| & \leq 2 \prod_{\ell = 1}^{2n} |\eta_{j_\ell}| \cdot \int_{{2}/{|\eta_{j_{2n + 1}}|} \le |\zeta| \le {2}/{|\eta_{j_{2n}}|}} \frac{1}{|\zeta|} \diff{\zeta}\\
& = 4 \prod_{\ell = 1}^{2n} |\eta_{j_\ell}| \cdot \log\bigg|\frac{\eta_{j_{2n + 1}}}{\eta_{j_{2n}}}\bigg| \leq C(n, s) \prod_{\ell = 1}^{2n} |\eta_{j_\ell}|,
\end{align*}
where the last line follows from the fact that $|\eta_{j_{2n}}|$ and $|\eta_{j_{2n + 1}}|$ are comparable.

As for \eqref{Tn-high}, we have
\begin{align*}
|\boldsymbol{\Lambda}_n^{high}(\etab_n)| & \leq |\eta_{j_1}| \int_{|\eta_{j_1} \zeta| > 2} \bigg(\prod_{k = 2}^{2n + 1} \frac{|1 - e^{i \eta_{j_k} \zeta}|}{|\zeta|} \bigg) \cdot \frac{|1 - e^{i \eta_{j_1} \zeta}|}{|\eta_{j_1} \zeta|} \diff{\zeta}\\
& \le 2^{2n} |\eta_{j_1}| \int_{|\eta_{j_1} \zeta| > 2} \frac{\diff{\zeta}}{|\zeta|^{2n }}\\
& \le \frac{4}{2n - 1} \bigg(\prod_{k = 1}^{2n} |\eta_{j_k}|\bigg) .
\end{align*}

Collecting these estimates, we find that
\[
|\boldsymbol{\Lambda}_n(\etab_n)| \leq C(n, s) \bigg(\prod_{k = 1}^{2n} |\eta_{j_k}|\bigg) .
\]

Using the $L^2$-boundedness theorem for pseudo-differential operators, we can bound the $H^s$-norm of $f_{(n)}$ in \eqref{expandfn2} by
\begin{align*}
\|f_{(n)}\|_{H^s}\lesssim \sum\limits_{j, k = 0, 1}\|\px^j \partial_{\eta_{2n+1}}^k \mathbf P_n\|_{L^\infty_{x,\eta_{2n+1}}} \|\vp\|_{H^s},
\end{align*}
where
\begin{align*}
&\mathbf P_n(x,\eta_{2n+1})=\left(i\sum\limits_{j=1}^{2n+1}\eta_{j}\right)\int_{\R^{2n}}\mathbb{I}_n(\hat\etab_n, \eta_{2n+1}) \Tb_n(\etab_n)  \prod\limits_{k=1}^{\ell}\left[1-\chi\left(\frac{(2n+1)\eta_{j_k}}{\eta_{2n+1}}\right)\right]
\\
&\qquad\qquad\qquad\qquad\qquad  \cdot \prod\limits_{k=\ell+1}^{2n}\chi\left(\frac{(2n+1)\eta_{j_k}}{\eta_{2n+1}}\right)
\prod_{j = 1}^{2 n} \hat\varphi(\eta_j) e^{i (\eta_1 + \eta_2 + \dotsb + \eta_{2n }) x}\diff{\hat\etab_n}.
\end{align*}
Considering the support of the cut-off functions, we therefore have
\[
\|f_{(n)}\|_{H^s}\lesssim \|\vp\|_{H^s} \bigg(\sum_{n = 1}^\infty  C(n, s) |c_n| \|\varphi_x\|_{W^{2, \infty}}^{2n}\bigg).
\]
So we have proved that the equation can be written as
\[
\vp_t + \px T_{B^0[\vp]} \vp + \Rc(\vp) =2L\vp_x  -\big[( T_{B^{\log}[\vp]}) \log_+|\px| \vp\big]_x.
\]
Then the proposition follows by the commutator estimate \eqref{LTa} and the fact that $\mathbf 1_{|\xi|<1}\px\log|\px|$ is
bounded from $H^s(\R)$ to $H^s(\R)$.
\end{proof}

\section{Energy estimates and local well-posedness}\label{sec-apriori}

In this section, we prove a local existence and uniqueness result for the initial value problem \eqref{fsqgivp},
which is stated in Theorem~\ref{th:loc_exist}.

As noted in the introduction, standard $H^s$-estimates
do not close, so we introduce a weighted energy $E^{(s)}$.
The solutions we construct satisfy $\|T_{B^{\log}[\vp]}\|_{L^2\to L^2} < 2$, and then $(2 - T_{B^{\log}[\vp]})$ is a positive, self-adjoint operator on $L^2$.
We can therefore define homogeneous and nonhomogeneous weighted energies that are equivalent to the $H^s$-energies by
\begin{equation}
E^{(s)}(t) = \int_\R |D|^s \varphi(x, t) \cdot \Big(2 - T_{B^{\log}[\vp]}\Big)^{2s + 1} |D|^s \vp(x, t) \diff{x},
\qquad\tilde{E}^{(s)}(t) = \sum_{j=0}^s E^{(j)}(t).
\label{weighted_energy1}
\end{equation}
For simplicity, we consider only integer norms with $s\in \N$.

We begin by stating an \emph{a priori} energy estimate. In the following, we use $F$ to denote an increasing, continuous, non-negative function, which might change from line to line.

\begin{proposition}\label{apriori}
Let $s \ge 5$ be an integer and $\vp_0 \in {H}^s(\R)$. Then there exist constants
$\tilde{C} > 0$, depending only on $s$, and $T>0$ such that the following statement holds: If
$\vp_0$ satisfies
\[
\big\|T_{B^{\log}[\vp_0]}\|_{L^2 \to L^2} \leq C,\qquad
\sum_{n = 1}^\infty \tilde{C}^n |c_n| \Big(\|\px\varphi_0\|_{W^{3, \infty}}^{2n} + \|L\px \varphi_0\|_{W^{2, \infty}}^{2n}\Big) < \infty
\]
for some constant $0 < C < 2$, where $c_n$ is defined in \eqref{Tnintdef}, then a solution $\vp\in C([0,T]; H^s(\R))$ of \eqref{fsqgivp} with initial data $\vp(\cdot, 0) = \vp_0$ satisfies
\begin{align}
\tilde{E}^{(s)}(t) &\le \tilde{E}^{(s)}(0) + \int_0^t \big(\|\varphi_x(\tau)\|_{W^{3, \infty}} + \|L \varphi_x(\tau)\|_{W^{2, \infty}}\big)^2 F\left(\|\vp_x(\tau)\|_{W^{3,\infty}} + \|L \vp_x(\tau)\|_{W^{2,\infty}}\right) \tilde{E}^{(s)}(\tau) \diff{\tau},
\label{apest}
\\
&\big\|T_{B^{\log}[\vp(t)]}\|_{L^2 \to L^2} < 2,\qquad
\sum_{n = 1}^\infty \tilde C^n |c_n| \Big(\|\varphi_x(t)\|_{W^{3, \infty}}^{2n} + \|L \varphi_x(t)\|_{W^{2, \infty}}^{2n}\Big) < \infty,
\nonumber
\end{align}
for all $t\in [0,T]$.
In \eqref{apest}, $\tilde{E}^{(s)}$ is defined in \eqref{weighted_energy1}, and $F(\cdot)$ is an increasing, continuous, non-negative function such that
\begin{equation}
F\left(\|\varphi_x\|_{W^{3, \infty}} + \|L\varphi_x\|_{W^{2, \infty}}\right)\approx \sum_{n = 0}^\infty \tilde{C}^n |c_n| \Big(\|\varphi_x\|_{W^{3, \infty}}^{2n} + \|L \varphi_x\|_{W^{2, \infty}}^{2n}\Big).
\label{defF}
\end{equation}
\end{proposition}

This result follows from the more general result in Proposition~\ref{linearEstimate}. In order to prove that proposition, we first state a lemma.
\begin{lemma}\label{lem-dfT} Suppose that $s \ge 5$ is an integer. If $\vp\in H^s(\R)$ is a solution of \eqref{fsqgeq} and $\psi\in C_t^1L_x^2$, then
\[
\pt (2 - T_{B^{\log}[\vp]})^s \psi = (2 - T_{B^{\log}[\vp]})^s \psi_t - s (2 - T_{B^{\log}[\vp]})^{s - 1} T_{\pt B^{\log}[\vp]} \psi + \Rc_2(\psi),
\]
where the remainder term satisfies
\[
\|\Rc_2(\psi)\|_{H^1} \lesssim \|\psi\|_{L^2} \bigg(\sum_{n = 1}^\infty C(n,s)  |c_n| \Big(\|\varphi_x\|_{W^{2, \infty}}^{2n} + \| \varphi_{xt}\|_{W^{1, \infty}}^{2n}\Big)\bigg)
\]
for constants $C(n,s)$ with at most exponential growth in $n$.
\end{lemma}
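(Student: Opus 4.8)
The plan is to expand the power $(2 - T_{B^{\log}[\vp]})^s$ using the product (Leibniz) rule for the time derivative, treating $\pt$ as a derivation on the noncommutative algebra of operators, and then to control all the resulting terms that are not the two displayed principal ones. First I would write
\[
\pt (2 - T_{B^{\log}[\vp]})^s = \sum_{k = 0}^{s - 1} (2 - T_{B^{\log}[\vp]})^{k} \big(\pt (2 - T_{B^{\log}[\vp]})\big) (2 - T_{B^{\log}[\vp]})^{s - 1 - k},
\]
and note that $\pt(2 - T_{B^{\log}[\vp]}) = -T_{\pt B^{\log}[\vp]}$, since the Weyl para-product depends linearly on its symbol. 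This converts the problem into estimating a sum of $s$ terms of the form $(2 - T_{B^{\log}[\vp]})^{k} T_{\pt B^{\log}[\vp]} (2 - T_{B^{\log}[\vp]})^{s - 1 - k}$ acting on $\psi$, and showing that the difference between this sum and $s(2 - T_{B^{\log}[\vp]})^{s-1} T_{\pt B^{\log}[\vp]}$ is the remainder $\Rc_2(\psi)$, estimated in the stronger $H^1$-norm rather than just $L^2$.

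The key mechanism is a commutator gain: for each $k$ I would move all copies of $(2 - T_{B^{\log}[\vp]})$ to the left past $T_{\pt B^{\log}[\vp]}$, generating commutators $[2 - T_{B^{\log}[\vp]}, T_{\pt B^{\log}[\vp]}] = -[T_{B^{\log}[\vp]}, T_{\pt B^{\log}[\vp]}]$. Since both symbols lie in $\Mc_{(1,1)}$ by the estimate \eqref{B-est} (and $\pt B^{\log}[\vp]$ inherits an analogous bound once one differentiates the defining formula \eqref{defBlog} in $t$, using the equation \eqref{fsqgeq} to replace $\vp_t$), the symbolic calculus shows the commutator of two such para-products is a bounded operator with a half-derivative or better to spare — enough to map $L^2 \to H^1$ after combining with the remaining bounded factors. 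Each rearrangement produces one such commutator plus the clean term $T_{\pt B^{\log}[\vp]}(2 - T_{B^{\log}[\vp]})^{s-1}$; summing over $k = 0, \dots, s-1$ gives exactly $s$ copies of the clean term plus a finite sum of commutator terms, and since $(2 - T_{B^{\log}[\vp]})^{s-1}$ commutes with nothing a priori I would symmetrize once more to land on $s(2 - T_{B^{\log}[\vp]})^{s-1}T_{\pt B^{\log}[\vp]}$ as written, absorbing the leftover commutators into $\Rc_2$.

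The bookkeeping of constants is the part demanding care: each symbol norm $\|B^{\log}[\vp]\|_{\Mc_{(1,1)}}$ and $\|\pt B^{\log}[\vp]\|_{\Mc}$ is an $n$-indexed sum with coefficients $C(n,s)|c_n|$ of at most exponential growth in $n$, and raising $(2 - T_{B^{\log}[\vp]})$ to the $(s-1)$st power multiplies these, so I would check that the combinatorial factors from the Leibniz expansion (there are only $s$ of them) and from distributing the power do not destroy the claimed exponential-in-$n$ growth of the final $C(n,s)$; because $s$ is fixed this is routine but must be stated. The main obstacle I anticipate is verifying that $\pt B^{\log}[\vp]$ obeys a symbol estimate of the same type as $B^{\log}[\vp]$ itself: this requires differentiating \eqref{defBlog} in $t$, substituting $\vp_t = L[(2 - T_{B^{\log}[\vp]})\vp]_x - \px T_{B^0[\vp]}\vp - \Rc$ from \eqref{fsqgeq}, and checking that the logarithmic derivative $L$ and the loss of one $x$-derivative are compensated by the structure of the symbol (the cutoffs $\chi((2n+1)\eta_j/\xi)$ restrict to $|\eta_j| \lesssim |\xi|/n$, so $\px$ on a low-frequency factor costs only $|\xi|/n$), leaving a bound in terms of $\|\vp_x\|_{W^{2,\infty}}$ and $\|L\vp_x\|_{W^{2,\infty}}$ as stated; once that symbol bound is in hand, the rest of the estimate is a mechanical application of Lemma~\ref{Commu} and the boundedness of $T_a$ on $L^2$.
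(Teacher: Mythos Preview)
Your proposal is correct and follows essentially the same route as the paper: expand $\pt(2-T_{B^{\log}[\vp]})^s\psi$ by the Leibniz rule into $(2-T_{B^{\log}[\vp]})^s\psi_t$ plus $s$ terms of the form $(2-T_{B^{\log}[\vp]})^k(-T_{\pt B^{\log}[\vp]})(2-T_{B^{\log}[\vp]})^{s-1-k}\psi$, then repeatedly commute $(2-T_{B^{\log}[\vp]})$ past $T_{\pt B^{\log}[\vp]}$ and absorb the commutators into $\Rc_2$. The paper's proof is just this, stated tersely; it asserts the key commutator bound
\[
\big\|[T_{\pt B^{\log}[\vp]},\,2-T_{B^{\log}[\vp]}]f\big\|_{H^1}\lesssim \|f\|_{L^2}\sum_{n\ge1} C(n,s)|c_n|\big(\|\vp_x\|_{W^{2,\infty}}^{2n}+\|L\vp_x\|_{W^{2,\infty}}^{2n}\big)
\]
and applies it with $f=(2-T_{B^{\log}[\vp]})^j\psi$ for $j=s-2,\dots,1$.

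Your anticipation that one must control $\|\pt B^{\log}[\vp]\|_{\Mc}$ by substituting the equation for $\vp_t$ is correct and is in fact more careful than the paper's proof of this lemma, which simply invokes \eqref{B-est}; the paper only makes this explicit slightly later (in the proof of Proposition~\ref{apriori}, via the bound $\|\vp_{xt}\|_{L^\infty}\lesssim\sum_n C(n,s)|c_n|(\|\vp_x\|_{W^{2,\infty}}^{2n}+\|L\vp_x\|_{W^{2,\infty}}^{2n})$). Your remark about needing to ``symmetrize once more'' is unnecessary: after commuting all factors of $(2-T_{B^{\log}[\vp]})$ to the left of $T_{\pt B^{\log}[\vp]}$, each of the $s$ Leibniz terms already produces exactly $(2-T_{B^{\log}[\vp]})^{s-1}T_{\pt B^{\log}[\vp]}\psi$ as its principal part, so the sum is $-s(2-T_{B^{\log}[\vp]})^{s-1}T_{\pt B^{\log}[\vp]}\psi$ on the nose.
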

\begin{proof}
Since $s$ is an integer, we can calculate the time derivative as
\begin{align*}
\pt (2 - T_{B^{\log}[\vp]})^s \psi &= T_{\pt B^{\log}[\vp]} (2 - T_{B^{\log}[\vp]})^{s - 1} \psi + (2 - T_{B^{\log}[\vp]}) T_{\pt B^{\log}[\vp]} (2 - T_{B^{\log}[\vp]})^{s - 2} \psi\\
&\hspace{1in} + \dotsb + (2 - T_{B^{\log}[\vp]})^{s - 1} T_{\pt B^{\log}[\vp]} \psi + (2 - T_{B^{\log}[\vp]})^{s} \psi_t.
\end{align*}
By Lemma \ref{lem:taest},
\[
\left\|[T_{\pt B^{\log}[\vp]}, (2 - T_{B^{\log}[\vp]})]f\right\|_{L^2}\lesssim \|f\|_{L^2} \bigg(\sum_{n = 1}^\infty C(n,s) |c_n| \Big(\|\varphi_x\|_{W^{1, \infty}}^{2n} + \| \varphi_{xt}\|_{W^{1, \infty}}^{2n}\Big)\bigg).
\]

By taking $f = (2-T_{B^{\log}[\vp]})^{s-2}\psi, (2-T_{B^{\log}[\vp]})^{s-3}\psi, \dotsc, (2-T_{B^{\log}[\vp]})\psi$ and applying the above estimate repeatedly, we obtain the conclusion.
\end{proof}

We then have the following estimate for a linearization of \eqref{fsqgeq}.
\begin{proposition}\label{linearEstimate}
Assume $s\geq 5$, $\vp_0\in H^s(\R)$, and $\Upsilon\in C^{}([0,T];H^{s}(\R))$. Suppose that $u_x \in C([0,T]; W^{3, \infty}(\R))$, $u_t\in C([0,T]; W^{3,\infty}(\R))$, and  $Lu_x\in C([0,T]; W^{3, \infty}(\R))$ satisfy
\[
\big\|T_{B^{\log}[u(t)]}\|_{L^2 \to L^2} < 2,\qquad
\sum_{n = 1}^\infty \tilde C^n |c_n| \Big(\|u_x(t)\|_{W^{3, \infty}}^{2n} + \|L u_x(t)\|_{W^{2, \infty}}^{2n}+\|u_{tx}(t)\|_{W^{1, \infty}}^{2n}\Big) < \infty
\qquad
\text{for all $t \in [0, T]$.}
\]
Consider the initial value problem
\begin{align*}
\vp_t + \px T_{B^0[u]} \vp + \Upsilon(x,t) = L \big[(2 - T_{B^{\log}[u]}) \vp\big]_x, \quad  \vp(x,0)=\vp_0(x).
\end{align*}
Then this linear problem has a unique solution $\vp\in C([0,T]; H^s(\R))$,
and the linearized energy
\begin{align}
\label{uenergy}
E^{(s)}_u=\int_\R |D|^s \varphi(x, t) \cdot \Big(2 - T_{B^{\log}[u]}\Big)^{2s + 1} |D|^s \vp(x, t) \diff{x},
\qquad\tilde{E}^{(s)}_u(t) = \sum_{j=0}^s E^{(j)}_u(t),
\end{align}
satisfies
\begin{align}
\label{apest1}
\begin{split}
\tilde{E}^{(s)}_u(t)  &\le \tilde{E}^{(s)}_u(0)
+ \int_0^t  F\left(\|u_x(\tau)\|_{W^{3,\infty}} + \|u_{tx}(\tau)\|_{W^{1,\infty}}+\|L u_x(t)\|_{W^{2, \infty}}\right)
\\
&\qquad\cdot \biggl\{
\left(\|u_x(\tau)\|_{W^{3, \infty}} + \|u_{tx}(\tau)\|_{W^{1, \infty}}+\|L u_x(t)\|_{W^{2, \infty}}\right)^2
\|\vp(\tau)\|_{H^s}^2
+ \|\Upsilon(\tau)\|_{H^s}\|\vp(\tau)\|_{H^s}\biggr\} \diff{\tau},
\end{split}
\end{align}
where $F$ is an increasing, continuous, non-negative function that satisfies
\begin{equation}
F\left(\|u_x(\tau)\|_{W^{3,\infty}} + \|u_{tx}(\tau)\|_{W^{1,\infty}}+\|L u_x(t)\|_{W^{2, \infty}}\right)\approx \sum_{n = 0}^\infty \tilde{C}^n |c_n| \Big(\|u_x\|_{W^{3, \infty}}^{2n} + \|u_{tx}\|_{W^{1, \infty}}^{2n}+ \|Lu_{x}(\tau)\|_{W^{2, \infty}}^{2n}\Big).
\label{defFu}
\end{equation}
for some constant $\tilde{C} > 0$ depending only on $s$.
\end{proposition}

\begin{proof}
We apply the operator $|D|^s$ to equation \eqref{fsqgeq} to get
\beq\label{Dseqn}
|D|^s \vp_t + \px |D|^s T_{B^0[u]} \vp + |D|^s \Upsilon = \px L |D|^s \big[(2 - T_{B^{\log}[u]}) \vp\big].
\eeq
Using Lemma \ref{lem-DsT}, we find that
\[
\begin{aligned}
|D|^s\left[(2 - T_{B^{\log}[u]}) \vp\right]&=2|D|^s\vp-|D|^s(T_{B^{\log}[u]} \vp)\\
&=2|D|^s\vp-T_{B^{\log}[u]} |D|^s\vp
- sT_{\px B^{\log}[u]} |D|^{s-2} \vp_x
+ \Rc_{2},
\end{aligned}
\]
where
\[
\|\px \Rc_{2}\|_{L^2} \lesssim \left(\sum_{n = 1}^\infty C(n,s) |c_n| \|u_x\|_{W^{3, \infty}}^{2n}\right) \|\vp\|_{H^{s-1}}.
\]
Thus, we can write the right-hand side of \eqref{Dseqn} as
\[
\begin{aligned}
\px L |D|^s & \left[(2 - T_{B^{\log}[u]}) \vp\right]\\
& = \px L \left[(2 - T_{B^{\log}[u]}) |D|^s\vp - s T_{\px B^{\log}[u]} |D|^{s-2} \vp_x\right]+\Rc_{3}\\
& =L \left\{(2 - T_{B^{\log}[u]}) |D|^s\vp_x - T_{\px B^{\log}[u]} |D|^s\vp + s T_{\px B^{\log}[u]} |D|^{s} \vp \right\} + \Rc_{3}\\
&= L\left\{(2 - T_{B^{\log}[u]}) |D|^s\vp_x + (s - 1) T_{\px B^{\log}[u]} |D|^s \vp \right\} + \Rc_{3},
\end{aligned}\]
where
\[
\|\Rc_3\|_{L^2}\lesssim \bigg(\sum_{n = 1}^\infty C(n,s) |c_n| \|u_x\|_{W^{3, \infty}}^{2n} \bigg) \|\vp\|_{H^{s-1}}.
\]

Applying $(2 - T_{B^{\log}[u]})^s$ to \eqref{Dseqn} and
commuting $(2 - T_{B^{\log}[u]})^s$ with $L$ up to remainder
terms, we obtain that
\begin{equation}
\label{JDseqn}
\begin{aligned}
& (2 - T_{B^{\log}[u]})^s |D|^s \vp_t + (2 - T_{B^{\log}[u]})^s \px |D|^s T_{B^0[\vp]} \vp+(2 - T_{B^{\log}[u]})^s |D|^s\Upsilon\\
&\qquad = L\left\{(2 - T_{B^{\log}[u]})^{s + 1} |D|^s\vp_x + (s - 1) (2 - T_{B^{\log}[u]})^s T_{\px B^{\log}[u]} |D|^s \vp \right\} + \Rc_{4}\\
&\qquad=  \px L\left\{(2 - T_{B^{\log}[u]})^{s + 1} |D|^s \vp\right\} + \Rc_{5},
\end{aligned}
\end{equation}
where $\|\Rc_{5}\|_{L^2} \lesssim \left(\sum_{n = 1}^\infty C(n,s) |c_n| \|u_x\|_{W^{3, \infty}}^{2n}\right) \|\vp\|_{H^s}$.

By Lemma \ref{lem-dfT}, with $\psi = |D|^s\vp$, the time derivative of $E^{(s)}_u(t)$ in \eqref{uenergy} is
\bel{dtE}\begin{aligned}
\frac{\diff}{\diff{t}} E^{(s)}_u(t) & = - \int_{\R} (2s+1) |D|^s\vp\cdot (2 - T_{B^{\log}[u]})^{2s} T_{\pt B^{\log}[u]} |D|^s\vp\diff{x}\\
& + 2\int_{\R}|D|^s\vp \cdot (2 - T_{B^{\log}[u]})^{2s+1} |D|^s\vp_t\diff{x} + \int_{\R}\Rc_2 \cdot |D|^s\vp \diff{x}.
\end{aligned}\eeq
We will estimate each of the terms on the right-hand side of \eqref{dtE}, where the second term requires the most work.

The first term on the right-hand side of \eqref{dtE} can be estimated by
\[
\begin{aligned}
&\left| \int_{\R} (2s+1)|D|^s\vp \cdot (2 - T_{B^{\log}[u]})^{2s} T_{\pt B^{\log}[u]} |D|^s\vp\diff{x} \right|\\
&\qquad\lesssim  \bigg(\sum_{n = 1}^\infty C(n,s) |c_n| \Big(\|u_x\|_{W^{1, \infty}}^{2n} + \| u_{tx}\|_{W^{1, \infty}}^{2n}\Big)\bigg) \|\vp\|_{H^s}^2.
\end{aligned}
\]
Using Lemma~\ref{lem-dfT}, we can estimate the third term  on the right-hand side of \eqref{dtE} by
\begin{align*}
 \int_{\R}\Rc_2 \cdot |D|^s\vp \diff{x} \lesssim  \bigg(\sum_{n = 1}^\infty C(n,s) |c_n| \Big(\|u_{x}\|_{W^{2, \infty}}^{2n}+\|u_{tx}\|_{W^{1, \infty}}^{2n}  \Big)\bigg) \|\vp\|_{H^s} \|\vp\|_{H^{s - 1}}.
 \end{align*}

To estimate the second term  on the right-hand side \eqref{dtE}, we multiply \eqref{JDseqn} by $(2 - T_{B^{\log}[u]})^{s+1} |D|^s \vp$, integrate the result with respect to $x$, and use the self-adjointness of
$(2 - T_{B^{\log}[u]})^{s + 1}$, which gives
\[
\int_{\R} |D|^s \vp \cdot (2 - T_{B^{\log}[u]})^{2s + 1} |D|^s \vp_t \diff{x}
= \Rm{1} + \Rm{2} + \Rm{3}+\Rm{4},
\]
where
\begin{align*}
\Rm{1} &= - \int_{\R} |D|^s\vp\cdot (2 - T_{B^{\log}[u]})^{2s+1} |D|^s \partial_x T_{B^{0}[\vp]} \vp \diff{x},
\\
\Rm{2} &= \int_\R (2 - T_{B^{\log}[u]})^{s+1} |D|^s \vp \cdot \px L (2 - T_{B^{\log}[u]})^{s+1} |D|^s \vp \diff{x},
\\
\Rm{3}&=\int _\R(2 - T_{B^{\log}[u]})^{s+1} |D|^s \vp \cdot \Rc_{5} \diff{x},\\
\Rm{4}&=\int_{\R}  |D|^s \vp \cdot (2 - T_{B^{\log}[u]})^{2s + 1} |D|^s \Upsilon \diff{x}.
\end{align*}
We have $\Rm{2}=0$, since $\partial_x L$ is skew-symmetric, and
\[
\Rm{3} \lesssim  \bigg(\sum_{n = 1}^\infty C(n,s) |c_n| \|u_x\|_{W^{3, \infty}}^{2n}\bigg) \|\vp\|_{H^s}^2,
\qquad \Rm{4} \lesssim  F( \|u_x\|_{W^{3, \infty}}) \|\vp\|_{H^s}\|\Upsilon\|_{H^s},
\]
since $\|\Rc_{5}\|_{L^2} \lesssim  \left(\sum_{n = 1}^\infty C(n,s) |c_n| \|u_x\|_{W^{3, \infty}}^{2n} \right)\|\vp\|_{H^s}$ and $(2 - T_{B^{\log}[u]})^{s+1}$ is bounded on $L^2$.

\noindent {\bf Term} $\Rm{1}$ {\bf estimate.} We write  $\Rm{1} = -\Rm{1}_a + \Rm{1}_b$, where
\begin{align*}
\Rm{1}_a & =\int_{\R}|D|^s\vp\cdot (2 - T_{B^{\log}[u]})^{2s+1} \partial_x  T_{B^0[u]} |D|^s \vp \diff{x},\\
 \Rm{1}_b&= \int_{\R} |D|^s\vp\cdot (2 - T_{B^{\log}[u]})^{2s+1} \partial_x [T_{B^0[u]}, |D|^s] \vp \diff{x}.
\end{align*}
By a commutator estimate and \eqref{B-est}, the second integral satisfies
\[
|\Rm{1}_b| \lesssim  \bigg(\sum_{n = 1}^\infty C(n,s) |c_n| \Big(\|u_x\|_{W^{2, \infty}}^{2n} + \|Lu_{x}\|_{W^{2, \infty}}^{2n}\Big)\bigg) \|\vp\|_{H^s}^2.
\]
To estimate the first integral, we write it as
\begin{align*}
\Rm{1}_a &= \Rm{1}_{a_1} - \Rm{1}_{a_2},
\end{align*}
where
\begin{align*}
\Rm{1}_{a_1}& = \int_{\R} |D|^s \vp \cdot [(2 - T_{B^{\log}[u]})^{2s + 1}, \partial_x] \left(T_{B^{0}[u]} |D|^s \vp\right) \diff{x},
\\
\Rm{1}_{a_2}  &= \int_{\R} |D|^s\vp_x \cdot(2 - T_{B^{\log}[u]})^{2s+1} \left(T_{B^0[u]} |D|^s\vp\right) \diff{x}.
\end{align*}

\noindent {\bf Term} $\Rm{1}_{a_1}$ {\bf estimate.} A Kato-Ponce type commutator estimate and \eqref{B-est} gives
\[
|\Rm{1}_{a_1}| \lesssim  \bigg(\sum_{n = 1}^\infty C(n,s) |c_n| \Big(\|u_x\|_{W^{2, \infty}}^{2n} + \|Lu_{x}\|_{W^{1, \infty}}^{2n}\Big)\bigg) \|\vp\|_{H^s}^2.
\]

\noindent {\bf Term} $\Rm{1}_{a_2}$ {\bf estimate.} We have
\bel{eqBs}\begin{aligned}
\Rm{1}_{a_2}
&=\int_{\R} \left(T_{B^0[u]} |D|^s \vp\right) \cdot \left\{\partial_x\left((2 - T_{B^{\log}[u]})^{2s + 1} |D|^s \vp\right)-\left[\partial_x, (2 - T_{B^{\log}[u]})^{2s + 1}\right] |D|^s \vp \right\}\diff{x}\\
&=- \int_{\R} \partial_x \left(T_{B^0[u]} |D|^s \vp\right) \cdot (2 - T_{B^{\log}[u]})^{2s + 1} |D|^s \vp \diff{x}\\
& \qquad - \int_\R \left(T_{B^0[u]} |D|^s \vp\right) \cdot \left[\partial_x, (2 - T_{B^{\log}[u]})^{2s + 1}\right] |D|^s \vp \diff{x}\\
&=- \int_{\R} \left(T_{B^0[u]} |D|^s \vp_x + \left[\partial_x, T_{B^0[u]}\right] |D|^s \vp\right) \cdot (2 - T_{B^{\log}[u]})^{2s + 1} |D|^s \vp \diff{x}\\
& \qquad - \int_\R \left(T_{B^0[u]} |D|^s \vp\right) \cdot \left[\partial_x, (2 - T_{B^{\log}[u]})^{2s + 1}\right] |D|^s \vp \diff{x}.
\end{aligned}\eeq
Using commutator estimates and \eqref{B-est}, we get that
\[
\begin{aligned}
\left\|\left[\partial_x,T_{B^0[u]}\right] |D|^s\vp\right\|_{L^2} &\lesssim \bigg(\sum_{n = 1}^\infty C(n,s) |c_n| \left(\|u_x\|_{W^{2,\infty}}^2 + \|Lu_{x}\|_{W^{2,\infty}}^2\right)\bigg) \|\vp\|_{H^s},\\
\left\|\left[\partial_x,(2 - T_{B^{\log}[u]})^{2s+1}\right] |D|^s\vp\right\|_{L^2} &\lesssim  \bigg(\sum_{n = 1}^\infty C(n,s) |c_n| \Big(\|u_x\|_{W^{2, \infty}}^{2n} \Big)\bigg) \|\vp\|_{H^s},\\
\left\|\partial_x \left[(2 - T_{B^{\log}[u]})^{2s+1}, T_{B^0[u]}\right]|D|^s\vp\right\|_{L^2} &\lesssim  \bigg(\sum_{n = 1}^\infty C(n,s) |c_n| \Big(\|u_x\|_{W^{2, \infty}}^{2n} + \|Lu_{x}\|_{W^{2, \infty}}^{2n}\Big)\bigg) \|\vp\|_{H^s}.
\end{aligned}
\]
Since $T_{B^0[\vp]}$ is self-adjoint, we can rewrite \eqref{eqBs} as
\[
\begin{aligned}
\Rm{1}_{a_2}
& = - \Rm{1}_{a_2} + \Rc_{6},
\end{aligned}\]
with
\[
|\Rc_{6}| \lesssim  \bigg(\sum_{n = 1}^\infty C(n,s) |c_n| \Big(\|u_x\|_{W^{2, \infty}}^{2n} + \|Lu_{x}\|_{W^{2, \infty}}^{2n}\Big)\bigg) \|\vp\|_{H^s}^2,
\]
and we conclude that
\[
|\Rm{1}_{a_2}| \lesssim  \bigg(\sum_{n = 1}^\infty C(n,s) |c_n| \Big(\|u_x\|_{W^{2, \infty}}^{2n} + \|Lu_{x}\|_{W^{2, \infty}}^{2n}\Big)\bigg) \|\vp\|_{H^s}^2.
\]

This completes the estimate of the terms on the right hand side of \eqref{dtE}. Collecting the above estimates and using the interpolation inequalities for $E^{(0)}_u$ and $E^{(s)}_u$, we obtain \eqref{apest1}

We observe that there exists a constant $\tilde C(s) > 0$ such that $C(n,s)\lesssim \tilde C(s)^n$. The series in \eqref{defF} then converges whenever $\|u_x\|_{W^{3, \infty}} + \|u_{tx}\|_{W^{1, \infty}}+ \|Lu_{x}(\tau)\|_{W^{2, \infty}}$ is sufficiently small, and we can choose $F$ to be an increasing, continuous, non-negative function that satisfies \eqref{defFu}.
\end{proof}

For $s\geq 5$, define a map
\[
\G \colon C([0,T]; H^s(\R))\to C([0,T]; H^s(\R))
\]
by $\G(u)=\vp$ where
\begin{align*}
\vp_t + \px T_{B^0[u]} \vp + \Rc(u)  = L \big[(2 - T_{B^{\log}[u]}) \vp\big]_x, \quad  \vp(x,0)=\vp_0(x),
\end{align*}
with the same $\Rc(\cdot)$ as the one in \eqref{fsqgeq}.
We will prove that $\G$ is a contraction mapping for sufficiently small $T > 0$, which implies the existence and uniqueness of
local solutions of the initial value problem for \eqref{fsqgeq}.

We first prove the following proposition.
\begin{proposition}[Boundedness]\label{bdd}
Assume $\vp_0$ satisfies the assumptions of Proposition~\ref{apriori} and $\|\vp_0\|_{H^s}\leq \bar C$ for some $\bar C > 0$. Define
\begin{align*}
X_T &= \bigg\{u\in C([0,T]; H^s(\R)) ~\big|~ \|u\|_{L^\infty_t(0,T;H^s(\R))}\leq 2\bar C,\quad \big\|T_{B^{\log}[u(t)]}\|_{L^2 \to L^2} < 2,\\
& \qquad \sum_{n = 1}^\infty \tilde C^n |c_n| \Big(\|u_x(t)\|_{W^{3, \infty}}^{2n} + \|L u_x(t)\|_{W^{2, \infty}}^{2n}+ \|u_{tx}(t)\|_{W^{1, \infty}}^{2n}\Big) < \infty,
\forall t \in [0, T].\bigg\}.
\end{align*}
 Then there exists $T > 0$, such that $\G \colon X_T \to  X_T$.
\end{proposition}
\begin{proof}
Taking $\Upsilon = \Rc(u)$ in Proposition~\ref{linearEstimate} and using \eqref{Rcest}, we obtain that
\begin{align}
\begin{split}
\tilde{E}^{(s)}_u(t) & \le \tilde{E}^{(s)}_u(0) + \int_0^t \big(\|u_x(\tau)\|_{W^{3, \infty}} + \|u_{tx}(\tau)\|_{W^{1, \infty}}+ \|Lu_{x}(\tau)\|_{W^{2, \infty}}\big)^2\\
& \hspace{.75in}\cdot F\left(\|u_x(\tau)\|_{W^{3,\infty}} + \|u_{tx}(\tau)\|_{W^{1,\infty}}+ \|Lu_{x}(\tau)\|_{W^{2, \infty}}\right) (\|\vp(\tau)\|_{H^s}^2+\|u(\tau)\|_{H^s}^2)\diff{\tau},
\end{split}
\end{align}
where $F$ is a positive continuous function.
Since $\|\vp_0\|_{H^s}\leq\bar C$, and $\|\vp(\cdot, t)\|_{H^s}\approx \left[\tilde E_{u}^{(s)}(t)\right]^{1/2}$, $\|u(t)\|_{H^s}$ are continuous in time,  there exists $T>0$ such that $\G(u)=\vp\in X_T$.
\end{proof}

Now, we prove Proposition~\ref{apriori}.
\begin{proof}[Proof of Proposition~\ref{apriori}]
We take $u=\vp$ and $\Upsilon = \Rc(\vp)$ in Proposition~\ref{linearEstimate}.
Since $\|2 - T_{B^{\log}[\vp_0]}\|_{L^2 \to L^2} \ge 2-C$,  and $\|B^{\log}[\vp](\cdot, t)\|_{\Mc_{(2,1)}}$
and $F\left(\|\varphi_x\|_{W^{3, \infty}} + \|L\varphi_x\|_{W^{2, \infty}}\right)$ are continuous in time, there exist $\Time>0$ and $m>0$, depending only on the initial data, such that
\[
\|2-T_{B^{\log}[\vp(t)]}\|_{L^2 \to L^2} \geq m \qquad \text{for $0\le t\le\Time$}.
\]
We therefore obtain that
\[
m^{2s+1}\|\vp\|_{H^s}^2\leq \tilde{E}^{(s)}\leq 2^{2s+1} \|\vp\|_{H^s}^2,
\]
so, using the remainder estimate \eqref{Rcest}, we get \eqref{apest} and \eqref{defF} from \eqref{apest1} and \eqref{defFu}.
\end{proof}

We construct a sequence of approximate solutions by
\[
\vp^{(0)}(x,t)=\vp_0(x),\qquad \vp^{(i)}=\G(\vp^{(i-1)})\quad \text{for $i\in \N$}.
\]
By Proposition \ref{bdd}, the set $\{\vp^{(i)} \mid i\in \N\}$ is bounded in $X_T$. We prove the convergence of this sequence by showing that $\G$ is a contraction mapping with respect to a low norm.

\begin{proposition}[Contraction]
\label{Contraction}
For sufficiently small $T > 0$,
$\G \colon X_T\to X_T$ defined above is a contraction mapping with respect to $\|\cdot\|_{L^\infty_t H^3_x}$.
\end{proposition}
\begin{proof}
For $u, v \in W$, let $\vp$ and $\psi$ be solutions of the equations
\begin{align*}
&\vp_t + \px T_{B^0[u]} \vp + \Rc(u) = L \big[(2 - T_{B^{\log}[u]}) \vp\big]_x,
\\
&\psi_t + \px T_{B^0[v]} \psi + \Rc(v) = L \big[(2 - T_{B^{\log}[v]}) \psi\big]_x,
\end{align*}
with the same initial data.
Taking their difference, we have
\begin{align*}
(\vp-\psi)_t+\px T_{B^0[u]} (\vp-\psi) &= L \big[(2 - T_{B^{\log}[u]}) (\vp-\psi)\big]_x+\px (T_{B^0[v]}-T_{B^0[u]}) \psi \\
& \qquad - L \big[(T_{B^{\log}[u]} - T_{B^{\log}[v]}) \psi\big]_x + \Rc(v)-\Rc(u).
\end{align*}
Applying Proposition~\ref{linearEstimate} with
\[
\Upsilon=-\px (T_{B^0[v]}-T_{B^0[u]}) \psi + L \big[(T_{B^{\log}[u]} + T_{B^{\log}[v]}) \psi\big]_x - \Rc(v)+\Rc(u),
\]
we obtain that, for $k\leq 3$,
\begin{align*}
 &\frac{\diff}{\diff t}\int_{\R} |D|^k(\vp-\psi)(2 - T_{B^{\log}[u]})^{2k+1}|D|^k(\vp-\psi)\diff x \\
 &\lesssim (\|u_x\|_{W^{3, \infty}} + \|u_{tx}\|_{W^{1, \infty}}+\|Lu_x\|_{W^{2, \infty}})^2 F_1(\|u_x\|_{W^{3, \infty}} + \|u_{tx}\|_{W^{1, \infty}}+\|Lu_x\|_{W^{2, \infty}})\|\vp-\psi\|_{H^k}^2 \\*
& + F_2(\|u_x\|_{W^{3, \infty}} + \|u_{tx}\|_{W^{1, \infty}}+\|Lu_x\|_{W^{2, \infty}}+\|v_x\|_{W^{3, \infty}} + \|v_{tx}\|_{W^{1, \infty}}+\|Lv_x\|_{W^{2, \infty}}) \|u-v\|_{H^4} \|\vp-\psi\|_{H^k}\|\psi\|_{H^{k+2}},
\end{align*}
where $F_1, F_2$ are two positive continuous functions.
Since $\vp=\psi$ at $t = 0$, we have, by Gr\"{o}nwall's inequality,
\[
\|(\vp-\psi)(t)\|_{H^k} \lesssim \int_0^t  e^{\int_\tau^t F_1(\cdots)(s)\diff s} F_2\left(\cdots\right)(\tau)\|(u-v)(\cdot, \tau)\|_{H^3}\|\psi(\cdot, \tau)\|_{H^{k+2}} \diff{\tau},
\]
where $(\cdots)$ are the same arguments as above.
By the $H^s$-energy estimate with $s \geq 5$, the function
\[
e^{\int_\tau^t F_1(\cdots)(s)\diff s} F_2\left(\cdots\right)(\tau)\|\psi(\cdot, \tau)\|_{H^{k+2}}
\]
is bounded on $[0,T]$ for $k\leq 3$. Thus, by taking $T$ small enough, we deduce that
\[
\|\vp-\psi\|_{L_t^\infty H_x^{3}}\leq \lambda \|u-v\|_{L_t^\infty H_x^3}
\]
for some $0 < \lambda<1$.
\end{proof}
So for any $s\geq 5$ and $\vp_0\in H^s(\R)$, $\{\vp^{(0)}, \vp^{(1)}, \dotsc, \vp^{(i)}, \dotsc\}$ is a bounded sequence in $C([0,T]; H^s(\R))$. We claim that this is a Cauchy sequence with respect to $\|\cdot\|_{L^\infty_tH^3_x}$. In fact, for any $0<\ve<1$, there is a positive integer $N=\log_\lambda \frac{(1-\lambda)\ve}{3\bar C}$ such that, for any $i>j>N$,
\begin{align*}
\|\vp^{(i)}-\vp^{(j)}\|_{L^\infty_tH^3_x}&\leq \|\vp^{(i)}-\vp^{(i-1)}\|_{L^\infty_tH^3_x}+\dotsb+\|\vp^{(j+1)}-\vp^{(j)}\|_{L^\infty_tH^3_x}\\
&\leq (\lambda^{i-N-1}+\dotsb+\lambda^{j-N})\|\vp^{(N+1)}-\vp^{(N)}\|_{L^\infty_tH^3_x}\\
&\leq (\lambda^{i-N-1}+\dotsb+\lambda^{j-N})\lambda^{N}\|\vp^{(1)}-\vp^{(0)}\|_{L^\infty_tH^3_x}\\
&\leq \frac{3\bar C}{1-\lambda}\lambda^N<\ve.
\end{align*}
So $\lim_{j\to\infty} \vp^{(j)}$ exists and is unique. The regularity of the solution follows from the {\it a priori} estimate.
Therefore, we obtain the following existence theorem with a blow-up criterion.
\begin{theorem}
\label{th:loc_exist}
Let $s \ge 5$ be an integer. Suppose that $\vp_0 \in H^s(\R)$ satisfies
\[
\|T_{B^{\log}[\vp_0]}\|_{L^2 \to L^2} \leq C,\qquad
\sum_{n = 1}^\infty \tilde{C}^n |c_n| \Big(\|\px\varphi_0\|_{W^{3, \infty}}^{2n} + \|L\px \varphi_0\|_{W^{2, \infty}}^{2n}\Big) < \infty
\]
for some constant $0 < C < 2$, where $\tilde C$ is the same constant as the one in Proposition~\ref{apriori} and the symbol $B^{\log}[\vp_0]$ is defined in \eqref{defBlog}.
Then there exists a maximal time of existence $0 < \Time_{\max} \le \infty$ depending only on $\|\vp_0\|_{H^s}$,  $C$, and $\tilde{C}$ such that the initial value problem \eqref{fsqgivp} has a unique solution with $\vp \in C([0, \Time_{\max}); H^s(\R))$. If $\Time_{\max} < \infty$, then  either
\begin{equation*}
\lim_{t \uparrow
 \Time_{\max}} \sum_{n = 1}^\infty \tilde{C}^n |c_n| \Big(\|\varphi_x(t)\|_{W^{3, \infty}}^{2n} + \|L \varphi_x(t)\|_{W^{2, \infty}}^{2n}\Big) = \infty \quad \text{or}\quad \lim_{t \uparrow \Time_{\max}} \big\|T_{B^{\log}[\vp(\cdot, t)]}\big\|_{L^2 \to L^2} = 2.
\end{equation*}
\end{theorem}

We remark that, by interpolation, one can also replace $\|\varphi_x(t)\|_{W^{3, \infty}}^{2n} + \|L \varphi_x(t)\|_{W^{2, \infty}}^{2n}$ by $\|\partial_x^{4}\varphi(t)\|_{L^\infty}^{2n} + \|L \varphi_x(t)\|_{L^\infty}^{2n}$.

The front equation is invariant under $(x,t)\mapsto(-x,-t)$, so the same result holds backward in time.
One could use a Bona-Smith argument, as in \cite{HSZ1}, to prove that the solution depends continuously on the initial data, but we will not carry out the details here.

\section{Global solution for small initial data}
\label{sec-global}

Beginning with this section, we address the global well-posedness of \eqref{fsqgivp} with small initial data. From now on, we fix the following parameter values
\begin{equation}\label{param_vals}
s = 1200,\qquad r = 1,\qquad  p_0 = 10^{-4}.
\end{equation}
The front equation \eqref{fsqgivp} is invariant under the transformation
\[
x\mapsto \lambda(x + 2\log |\lambda| t),\qquad t\mapsto \lambda t,\qquad \vp\mapsto \lambda\vp.
\]
The scaling-Galilean part of this transformation is generated by the
vector-field
\begin{equation}\label{defS}
\S=(x+2t)\partial_x+t\partial_t,
\end{equation}
and the linearized equation $\vp_t = 2 \log|\px| \varphi_x$ commutes with $\S$ (\emph{cf.} Lemma~\ref{S-comm}).
We also introduce the notation
\begin{equation}
\label{Lambda-h}
\begin{aligned}
h(x,t) = e^{-2t\px \log|\px|}\vp(x,t),\qquad  \hat{h}(\xi,t) &= e^{-2it\xi\log|\xi| } \hat\vp(\xi,t)
\end{aligned}
\end{equation}
for the function $h$ obtained by removing the action of the linearized evolution group on $\vp$.
When convenient, we write $h(\cdot,t) = h(t)$, $\vp(\cdot,t) = \vp(t)$.

Our global existence theorem is as follows.

\begin{theorem}\label{global}
Let $s$, $r$, $p_0$ be defined as in \eqref{param_vals}. There exists a constant $0<\ve \ll 1$, such that if $\vp_0 \in H^s(\R)$ satisfies
\[
\|\vp_0\|_{H^s}  + \|x \px \vp_0\|_{H^r} \leq \ve_0
\]
for some $0 < \ve_0 \leq \ve$, then there exists a unique global solution $\vp\in C([0,\infty); H^s(\R))$ of \eqref{fsqgivp}.
Moreover, this solution satisfies
\[
 \|\vp(t)\|_{H^s}+\|\S\vp(t)\|_{H^{r}}  \lesssim \ve_0(t+1)^{p_0},
\]
where $\S$ is the vector field in \eqref{defS}.
\end{theorem}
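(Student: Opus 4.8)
The plan is to combine the local existence theorem with a continuity (bootstrap) argument on three coupled quantities: the high-regularity energy $\|\vp(t)\|_{H^s}$, the scaling-Galilean energy $\|\S\vp(t)\|_{H^r}$, and a dispersive $Z$-norm $\|\hat h(t)\|_{Z}$ of the profile $h$ from \eqref{Lambda-h} (the weighted $L^\infty_\xi$-type norm to be introduced in Section~\ref{sec-Znorm}). First I would check that smallness of $\|\vp_0\|_{H^s}+\|x\px\vp_0\|_{H^r}$ is enough to start the machine: since $s=1200$, Sobolev embedding gives $\|\px\vp_0\|_{W^{2,\infty}}+\|L\px\vp_0\|_{W^{2,\infty}}\lesssim\|\vp_0\|_{H^s}\lesssim\ve_0$, so by \eqref{B-est} one has $\|T_{B^{\log}[\vp_0]}\|_{L^2\to L^2}\lesssim\ve_0^2$ and the convergence hypotheses of Theorem~\ref{th:loc_exist} hold for $\ve$ small; hence \eqref{fsqgivp} has a unique solution $\vp\in C([0,\Time_{\max});H^s)$. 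One then sets $\ve_1=A\ve_0$ for a constant $A$ depending only on $s,r,p_0$, assumes $(t+1)^{-p_0}\big(\|\vp(t)\|_{H^s}+\|\S\vp(t)\|_{H^r}\big)+\|\hat h(t)\|_{Z}\le\ve_1$ on a maximal interval $[0,T^\ast)\subseteq[0,\Time_{\max})$, and tries to improve the right side to $\ve_1/2$.

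The first ingredient is the linear dispersive estimate of Section~\ref{sec-sharp}: control of $\|\vp(t)\|_{H^s}$ together with the $Z$-norm converts, after a dyadic frequency decomposition and stationary-phase analysis of $e^{2t\px\log|\px|}$, into the pointwise bound $\|\vp_x(t)\|_{W^{2,\infty}}+\|L\vp_x(t)\|_{W^{2,\infty}}\lesssim\ve_1(t+1)^{-1/2}$. Inserting this into the a priori energy inequality \eqref{apest}, the integrand there is $\lesssim\ve_1^2(\tau+1)^{-1}\tilde E^{(s)}(\tau)$ (the function $F$ in \eqref{defF} being $O(1)$ near $0$), so Gr\"onwall gives $\tilde E^{(s)}(t)\lesssim\tilde E^{(s)}(0)(t+1)^{C\ve_1^2}$, i.e.\ $\|\vp(t)\|_{H^s}\lesssim\ve_0(t+1)^{C\ve_1^2}$. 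Since $p_0=10^{-4}$ is fixed while $\ve_1=A\ve_0$ may be taken as small as we wish, $C\ve_1^2<p_0$, and choosing $A$ large absorbs the multiplicative constant; this improves the $H^s$ part of the bootstrap. The scaling-Galilean energy $\|\S\vp(t)\|_{H^r}$ is improved the same way via Section~\ref{sec-sharp}: since $\S$ commutes with the linearized operator $\pt-2L\px$ by Lemma~\ref{S-comm}, $\S\vp$ satisfies a perturbed linearized equation whose forcing is built from commutators of $\S$ with the nonlinearity in \eqref{fsqgeq}, and the same $(\tau+1)^{-1}$-in-time energy estimate yields $\|\S\vp(t)\|_{H^r}\lesssim\ve_0(t+1)^{C\ve_1^2}$.

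The remaining and hardest step is the $Z$-norm closure of Section~\ref{sec-Znorm}. Starting from the Duhamel formula for the profile $\hat h$ associated with the cubic-plus-remainder form \eqref{cub-sqg}--\eqref{N>=5} of the equation, one must show that the trilinear (and higher-order) oscillatory integrals do not grow in time. Writing the resonant phase in terms of the dispersion relation $\omega(\xi)=2\xi\log|\xi|$, one integrates by parts in the intermediate spatial frequencies away from the space-resonant set, integrates by parts in the time variable away from the time-resonant set, and on the space-time resonant set --- where neither is possible --- one extracts the effective ODE for $\pt\hat h$ and runs a modified-scattering argument. This step feeds on the already-bootstrapped $H^s$- and $\S\vp$-bounds to control high-frequency and far-from-resonance contributions. \textbf{The main obstacle} I expect is precisely here: the $t^{-1/2}$ linear decay makes the cubic term decay only at the borderline, non-integrable rate $t^{-1}$, leaving no margin, so one must verify that the resonant set of $\omega(\xi)=2\xi\log|\xi|$ is non-degenerate enough for the modified-scattering correction to close --- this genuinely requires the space-time resonance machinery and weighted $L^\infty_\xi$ estimates rather than a soft energy argument.

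Finally, once all three components of the bootstrap quantity are improved to $\le\ve_1/2$, the set of $t$ on which the assumption holds is open (by continuity of the norms in time) and closed in $[0,\Time_{\max})$, hence equals $[0,\Time_{\max})$. In particular $\|\px\vp(t)\|_{W^{2,\infty}}$ stays $\lesssim\ve_1(t+1)^{-1/2}$, so $\|T_{B^{\log}[\vp(t)]}\|_{L^2\to L^2}\lesssim\ve_1^2<2$ and the series in \eqref{brcrit} stays bounded for all $t$; therefore neither alternative in the breakdown criterion \eqref{brcrit} can occur and $\Time_{\max}=\infty$. Uniqueness and $\vp\in C([0,\infty);H^s)$ are inherited from Theorem~\ref{th:loc_exist}, and the improved bootstrap bound is exactly $\|\vp(t)\|_{H^s}+\|\S\vp(t)\|_{H^r}\lesssim\ve_0(t+1)^{p_0}$, as claimed.
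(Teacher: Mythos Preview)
Your proposal is correct and follows essentially the same architecture as the paper: combine Theorem~\ref{th:loc_exist} with a bootstrap (Proposition~\ref{bootstrap}) on the triple $(\|\vp\|_{H^s},\|\S\vp\|_{H^r},\|\vp\|_Z)$, close the first two by feeding the $t^{-1/2}$ pointwise decay of Lemma~\ref{sharp} into the weighted energy inequalities and Gr\"onwall, and close the $Z$-norm via the space-time resonance and modified-scattering analysis of Section~\ref{sec-Znorm}. One small omission worth noting: the sharp pointwise decay of Lemma~\ref{sharp} also consumes the $\S\vp$-control (through the $\|P_k(x\px h)\|_{L^2}$ term in \eqref{LocDis} and the identity \eqref{xDh}), not only the $H^s$- and $Z$-norms as you wrote, but this is already built into your bootstrap hypothesis so the argument is unaffected.
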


Given local existence, we only need to prove the global \emph{a priori} bound. In order to do this, we introduce the $Z$-norm of a function $f\in L^2(\R)$, defined by
\begin{equation}
\|f\|_{Z} = \left\|(|\xi|+|\xi|^{r + 4}) \hat f(\xi)\right\|_{L^\infty_\xi},
\label{defZ}
\end{equation}
and prove the global bound by use of the following bootstrap argument.

\begin{proposition}[Bootstrap]\label{bootstrap}
Let $T>1$ and suppose that $\vp\in C([0,T]; H^s)$ is a solution of \eqref{fsqgivp}, where the initial data satisfies
\[
\|\vp_0\|_{H^s} + \|x \px \vp_0\|_{H^r}\leq \ve_0
\]
for some $0 < \ve_0 \ll 1$. If there exists $\ve_0 \ll \ve_1 \lesssim \ve_0^{1/3}$ such that the solution satisfies
\[
(t+1)^{-p_0}\left( \|\vp(t)\|_{H^s}+\|\S\vp(t)\|_{H^{r}} \right)+\|\vp\|_{Z}\leq \ve_1
\]
for every $t\in [0,T]$, then the solution satisfies an improved bound
\[
(t+1)^{-p_0}\left(\|\vp(t)\|_{H^s}+\|\S\vp(t)\|_{H^{r}} \right)+ \|\vp\|_{Z} \lesssim\ve_0.
\]
\end{proposition}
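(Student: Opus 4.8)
The plan is to prove Proposition~\ref{bootstrap} by establishing three improved bounds --- an $H^s$ energy bound for $\vp$, an $H^r$ bound for the scaling-Galilean field $\S\vp$, and a $Z$-norm bound --- each following from the bootstrap hypotheses alone, and then adding them. The mechanism is the standard one for dispersive equations: the $Z$-norm, together with the bound on $\S\vp$, controls the $L^\infty$-decay of $\vp$ and its low-order derivatives, which drives the energy and vector-field estimates; conversely, those Sobolev bounds supply the regularity and spatial localization needed for the stationary-phase and resonance analysis behind the $Z$-norm estimate. The constraint $\ve_0 \ll \ve_1 \lesssim \ve_0^{1/3}$ is arranged so that any expression at least cubic in $\ve_1$ is $O(\ve_0)$.

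\emph{Decay and energy.} First I would invoke the linear dispersive estimate of Section~\ref{sec-sharp}: using $\hat\vp(\xi,t)=e^{2it\xi\log|\xi|}\hat h(\xi,t)$ and stationary phase for the group $e^{2t\px\log|\px|}$ (whose phase has second derivative $\approx 1/\xi$), the bootstrap bounds on $\|\vp\|_Z$ and $\|\S\vp\|_{H^r}$ give
\[
\|\vp_x(t)\|_{W^{2,\infty}}+\|L\vp_x(t)\|_{W^{2,\infty}}\lesssim \ve_1(1+t)^{-1/2}.
\]
Feeding this into the \emph{a priori} estimate \eqref{apest} of Proposition~\ref{apriori}, the integrand is $\lesssim \ve_1^2(1+t)^{-1}\tilde{E}^{(s)}(\tau)$, so Gr\"onwall yields $\tilde{E}^{(s)}(t)\lesssim \ve_0^2(1+t)^{C\ve_1^2}$. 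Since $\ve_1\lesssim\ve_0^{1/3}$, we have $C\ve_1^2\le p_0$ once $\ve$ is small enough, hence $\|\vp(t)\|_{H^s}\lesssim \ve_0(1+t)^{p_0}$, improving the constant from $\ve_1$ to $\ve_0$.

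\emph{Scaling-Galilean field.} Since $\S$ commutes with the linearized operator $\pt-2\px\log|\px|$ by Lemma~\ref{S-comm}, applying $\S$ to \eqref{Tn-sqg} produces an equation for $\S\vp$ whose forcing is obtained by distributing $\S$ together with the commutators $[\S,\px]=-\px$ and $[\S,L]$ over the multilinear terms; because the nonlinearity is at least cubic, each resulting term carries at least two decaying factors of $\vp$. An $H^r$ energy estimate then gives $\frac{\diff}{\diff{t}}\|\S\vp\|_{H^r}^2\lesssim \ve_1^2(1+t)^{-1}\|\S\vp\|_{H^r}^2+(\text{lower order})$; summing the series in $n$ via $c_n=O(n^{-1/2})$ and applying Gr\"onwall yields $\|\S\vp(t)\|_{H^r}\lesssim \ve_0(1+t)^{p_0}$.

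\emph{$Z$-norm --- the main obstacle.} The bulk of the argument, carried out in Section~\ref{sec-Znorm}, is to show $\|\vp(t)\|_Z\lesssim\ve_0$. Writing Duhamel's formula for $\hat h$ and using the expansion \eqref{cub-sqg}, one splits off the cubic term from $\Nc_{\ge5}(\vp)$. The quintic-and-higher contributions gain an extra $(1+t)^{-1}$ per additional pair of factors, so their time integral converges and is $O(\ve_1^5)+\cdots=O(\ve_0)$, the sum over $n$ being controlled by $c_n=O(n^{-1/2})$ and the smallness of $\ve_1$. For the cubic term one substitutes $\hat\vp(\eta_j)=e^{2it\eta_j\log|\eta_j|}\hat h(\eta_j)$ to get an oscillatory integral over $\xi=\eta_1+\eta_2+\eta_3$ with phase $\Phi=-2\xi\log|\xi|+\sum_{j=1}^{3}2\eta_j\log|\eta_j|$, and decomposes frequency space dyadically: off the set where $\nabla_\eta\Phi$ vanishes one integrates by parts in $\eta$, and off the set where $\Phi$ vanishes one integrates by parts in time (a normal form), both producing contributions $O(\ve_1^3)=O(\ve_0)$. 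On the space-time resonant set --- the interactions where one input equals $\xi$ and the other two cancel --- the leading behavior is an ODE $\partial_t\hat h(\xi,t)\approx \frac{i}{t}c(\xi)|\hat h(\xi,t)|^2\hat h(\xi,t)$ with real $c(\xi)$, so $|\hat h(\xi,t)|$ is essentially conserved (modified scattering) and stays $\lesssim|\hat\vp_0(\xi)|+O(\ve_0)$; since $\|\vp_0\|_Z\lesssim\|\vp_0\|_{H^s}+\|x\px\vp_0\|_{H^r}\le\ve_0$ by the interpolation Lemma~\ref{interpolation}, weighting by $|\xi|+|\xi|^{r+3}$ gives $\|\vp(t)\|_Z\lesssim\ve_0$. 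The delicate points --- and where this analysis departs from \cite{CGI17} --- are the low-frequency regime, where $\log|\xi|$ is unbounded, and the absence of exact scaling invariance, so that only the scaling-Galilean symmetry \eqref{defS} is available for the resonance bookkeeping. Adding the three improved bounds gives the conclusion of Proposition~\ref{bootstrap}.
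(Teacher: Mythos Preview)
Your outline is correct and follows essentially the same route as the paper: the proof of Proposition~\ref{bootstrap} is assembled from Lemmas~\ref{sharp}--\ref{nonDisp}, corresponding exactly to your three steps (pointwise decay feeding the $H^s$ energy via Gr\"onwall, the weighted energy estimate for $\S\vp$, and the $Z$-norm analysis by space-time resonances with a modified-scattering phase correction at the resonant frequencies $(\pm\xi,\pm\xi,\mp\xi)$). The only point worth sharpening is that the pure space resonance at $(\eta_1,\eta_2,\eta_3)=(\xi/3,\xi/3,\xi/3)$ is handled separately from the space-time resonances by integration by parts in $t$ (your ``normal form'' remark), and that the paper carries the decay in Lemma~\ref{sharp} up to $W^{r,\infty}$ rather than $W^{2,\infty}$ since those extra derivatives are needed in the $\S\vp$ commutator estimates and the multilinear bounds of Section~\ref{sec-Znorm}.
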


Theorem \ref{global} then follows from combining this bootstrap proposition with the local existence and blow-up result in Theorem~\ref{th:loc_exist}. We call the assumptions in Proposition \ref{bootstrap} the \emph{bootstrap assumptions}. To prove Proposition \ref{bootstrap}, we need the following lemmas, some of whose proofs are deferred to the next sections.

\begin{lemma}[Sharp pointwise decay]\label{sharp}
Under the bootstrap assumptions,
\[
\||\px|^{r + 2}\vp_x(t)\|_{L^{\infty}}+\|L\vp_x(t)\|_{L^{\infty}}\lesssim \ve_1(t+1)^{-1/2}.
\]
\end{lemma}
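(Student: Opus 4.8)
The statement is a linear dispersive (pointwise decay) estimate for the solution, so the natural strategy is to pass to the profile $h$ defined in \eqref{Lambda-h} and exploit the stationary-phase structure of the linear group $e^{2t\px\log|\px|}$, whose dispersion relation is $\tau = 2\xi\log|\xi|$. The plan is to write
\[
\vp_x(x,t) = \int_\R i\xi\, e^{i\xi x} e^{2it\xi\log|\xi|}\hat h(\xi,t)\diff\xi,
\qquad
L\vp_x(x,t) = \int_\R i\xi\log|\xi|\, e^{i\xi x} e^{2it\xi\log|\xi|}\hat h(\xi,t)\diff\xi,
\]
and estimate each Littlewood--Paley piece $P_k$ separately. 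On dyadic block $k$, the phase $\Phi(\xi) = \xi x + 2t\xi\log|\xi|$ has $\Phi'(\xi) = x + 2t(\log|\xi|+1)$ and $\Phi''(\xi) = 2t/\xi$, so $|\Phi''|\approx t\,2^{-k}$ on $\supp\psi_k$; a standard stationary-phase/van der Corput bound then gives a factor $(t\,2^{-k})^{-1/2}$ from the oscillatory integral, multiplied by the amplitude $2^k$ (or $2^k|k|$ in the $L\vp_x$ case) and by $\|\psi_k\hat h\|_{L^1_\xi}\lesssim 2^{k/2}\|\psi_k\hat h\|_{L^2_\xi} + 2^{-k/2}\|\psi_k\partial_\xi\hat h\|_{L^2_\xi}$ — this is exactly the interpolation estimate in Lemma~\ref{interpolation}, rewritten for the profile.

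The key point is to control the two pieces $\|\widehat{P_k h}\|_{L^2_\xi}$ and $\|\partial_\xi\widehat{P_k h}\|_{L^2_\xi}$ in terms of the bootstrap quantities. The first is $\|P_k h\|_{L^2} = \|P_k\vp\|_{L^2}$, which is $\lesssim \ve_1(t+1)^{p_0}$ by the energy bootstrap assumption, and in the high-frequency regime one gains arbitrarily many powers of $2^{-k}$ from the $H^s$-norm with $s=1200$. The second, $\|\partial_\xi\widehat{P_k h}\|_{L^2_\xi}$, is governed by the scaling-Galilean vector field: from \eqref{Lambda-h} one computes $\partial_\xi\hat h = e^{-2it\xi\log|\xi|}(\partial_\xi\hat\vp - 2it(\log|\xi|+1)\hat\vp)$, and since $\S$ in frequency space acts (modulo the transport term from $x+2t$ and a multiple of $\hat\vp$) as $-\xi\partial_\xi - 2t(\log|\xi|+1)\xi\cdot$ applied appropriately — more precisely $\widehat{\S\vp}$ is built from $\xi\partial_\xi\hat\vp$ and $t\,\xi(\log|\xi|+1)\hat\vp$ — one gets $\|\partial_\xi\widehat{P_k h}\|_{L^2_\xi}\lesssim 2^{-k}\big(\|\widehat{P_k\S\vp}\|_{L^2} + \|P_k\vp\|_{L^2}\big)\lesssim 2^{-k}\ve_1(t+1)^{p_0}$, using the $\S\vp\in H^r$ bootstrap bound together with the $H^s$ bound (note $r=7 > r$ suffices for the $W^{r,\infty}$ norms after Sobolev embedding, and the loss of one derivative from the weight is absorbed by $s\gg r$).

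Assembling the dyadic bound, block $k$ contributes roughly $2^k\cdot\min\{2^k, (t\,2^{-k})^{-1/2}\}\cdot\big(2^{k/2}\|\psi_k\hat h\|_{L^2} + 2^{-k/2}\|\psi_k\partial_\xi\hat h\|_{L^2}\big)$ to $\|\vp_x\|_{L^\infty}$ (with an extra $|k|$ for $L\vp_x$). Splitting the sum at $2^k\approx 1$ and again near $2^k\approx t^{-1}$: for very low frequencies use the trivial bound $2^k$ on the oscillatory integral and the $Z$-norm control $|\hat h(\xi)| = |\hat\vp(\xi)|\lesssim \|\vp\|_Z/(|\xi|+|\xi|^{r+3})\lesssim \ve_1 2^{-k}$ to get a convergent geometric series; for $2^k\gtrsim 1$ use the stationary-phase factor $(t2^{-k})^{-1/2}$, which produces the claimed $t^{-1/2}$ decay, and sum over $k$ using the $H^s$-smallness to beat the polynomial growth in $2^k$ and the factor $|k|$. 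Since $p_0 = 10^{-4}$ is tiny and $s=1200$ is huge, the $(t+1)^{p_0}$ losses and the logarithmic $|k|$ weights are harmless. The main obstacle is bookkeeping the low-frequency region, where $\log|\xi|$ is unbounded: here one must not use stationary phase (the critical point $\Phi'=0$ may sit in the block, but $\Phi''$ degenerates only as $t\to\infty$, not in $\xi$), and instead rely on the $Z$-norm weight $|\xi|^{-1}$ near $\xi=0$ — getting the decay and the summability simultaneously at low frequency is the delicate step, and it is precisely why the $Z$-norm in \eqref{defZ} contains the factor $|\xi|$.
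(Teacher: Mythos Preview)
Your overall strategy matches the paper's: pass to the profile $h$, prove a frequency-localized dispersive estimate for $e^{2t\px\log|\px|}P_k$, and control the weighted quantity $\partial_\xi\hat h$ through the scaling--Galilean operator $\S$. However, there is a genuine gap in your control of $\partial_\xi\hat h$. You assert that $\widehat{\S\vp}$ is built from $\xi\partial_\xi\hat\vp$ and $t\,\xi(\log|\xi|+1)\hat\vp$, and hence that $\|\xi\partial_\xi\hat h_k\|_{L^2}\lesssim\|\S\vp\|_{L^2}+\|\vp\|_{L^2}$. This is only the \emph{linear} part of the relation: since $\S=(x+2t)\partial_x+t\partial_t$, the quantity $\widehat{\S\vp}$ contains $t\hat\vp_t$, and via the full equation this produces the nonlinearity. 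The correct identity (equation \eqref{xDh} in the paper) is
\[
\xi\partial_\xi\hat h(\xi,t)=e^{-2it\xi\log|\xi|}\bigl[-\widehat{\S\vp}(\xi,t)-\hat\vp(\xi,t)-t\,\widehat{\mathcal N}(\xi,t)\bigr],
\]
with $\mathcal N$ the nonlinear term in \eqref{Tn-sqg}. The extra term $t\,\widehat{\mathcal N}$ carries a factor of $t$ and can only be controlled through \eqref{estimate_N}, which itself needs pointwise decay of $\|\vp_x\|_{W^{2,\infty}}$ --- precisely what you are trying to prove. The paper closes this using the extra smallness $\ve_1^2$ carried by $\mathcal N$ together with the $(t+1)^{-3/4}$ in the dispersive remainder, but your proposal does not mention the term at all; without it the argument does not close.

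A secondary issue: your description of the dispersive bound is muddled. Van der Corput with $|\Phi''|\approx t\,2^{-k}$ does not produce a factor of $\|\psi_k\hat h\|_{L^1_\xi}$, and Lemma~\ref{interpolation} controls the \emph{physical-space} $L^1$ norm of $P_kf$, not $\|\psi_k\hat h\|_{L^1_\xi}$ --- these are different objects. The paper instead proves a dedicated estimate (Lemma~\ref{disp}) with two terms, $(t+1)^{-1/2}2^{k/2}\|\widehat{P_kf}\|_{L^\infty_\xi}$ (handled by the $Z$-norm) and $(t+1)^{-3/4}2^{-k/4}\|P_k(x\px f)\|_{L^2}$ (handled by $\S\vp$, $\vp$, and $t\mathcal N$); summation over $k$ then splits into three regimes depending on the size of $2^k$ relative to powers of $t$.
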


\begin{lemma}[Scaling-Galilean estimate]\label{weightedE}
Under the bootstrap assumptions,
\[
(t + 1)^{-p_0}\|\S\vp(t)\|_{H^{r}}\lesssim \ve_0.
\]
\end{lemma}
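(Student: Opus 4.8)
The plan is to control the scaling-Galilean energy $\|\S\vp(t)\|_{H^r}$ by deriving an energy estimate for the quantity $\S\vp$, treating it as a solution of the equation obtained by applying $\S$ to \eqref{fsqgeq}. Since $\S=(x+2t)\px + t\pt$ commutes with the linearized operator $\pt - 2L\px$ (by Lemma~\ref{S-comm}), applying $\S$ to the full equation produces a linear equation for $\S\vp$ of the same dispersive type, plus a nonlinear source term coming from the commutator of $\S$ with the nonlinear flux $\px T_{B^0[\vp]}\vp - \px L(T_{B^{\log}[\vp]}\vp) + \Rc$. The key structural point is that $\S$ acting on a para-product $T_{B[\vp]}\vp$ distributes by a Leibniz-type rule: $\S$ either hits the symbol $B[\vp]$ (producing $B[\S\vp]$ up to lower-order commutator terms, since the symbols are multilinear in $\vp$ and $x\px$ acting in frequency is comparable to a weight) or hits the argument $\vp$ (producing $T_{B[\vp]}\S\vp$). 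Because the symbols $B^0,B^{\log}$ are built from $2n$ copies of $\vp_x$, applying $\S$ spreads over these factors, and each term carries at least one factor estimated in $W^{r,\infty}$ via Lemma~\ref{sharp}, giving the crucial $(t+1)^{-1/2}$ decay.

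The steps, in order, would be: (1) Apply $\S$ to \eqref{fsqgeq} and organize the result as $(\pt - 2L\px)(\S\vp) = \mathcal{N}$, where $\mathcal{N}$ collects all commutator terms; here one uses $[\S,\pt]=-\pt$, $[\S,\px]=-\px$, and $[\S,L]=0$ (the last from \eqref{xL}-type identities, since $[x,L]$ contributes only a bounded operator $-i\xi^{-1}$ in frequency which is harmless at the relevant orders — this needs care at low frequency and is where the parameter choices $r=7$, $s=1200$ matter). (2) Para-linearize $\mathcal{N}$ exactly as in the Proposition in Section~\ref{sec:expand_para}, extracting the logarithmically-loss-generating piece $L(T_{B^{\log}[\vp]}\S\vp)_x$ so that it can be absorbed into a weighted energy $E_{\S}^{(r)}$ analogous to \eqref{weighted_energy1} but at regularity $r$ and with $\S\vp$ in place of $\vp$. (3) Run the weighted energy estimate of Proposition~\ref{apriori} verbatim for $\S\vp$: the skew-symmetric term $\px L$ drops, the self-adjoint para-product terms cancel up to commutators, and one is left with $\frac{d}{dt}\tilde E_{\S}^{(r)} \lesssim (\|\vp_x\|_{W^{r,\infty}}+\|L\vp_x\|_{W^{r,\infty}})\,F(\cdots)\,\big(\tilde E_{\S}^{(r)} + \|\vp\|_{H^s}^2\big)$, where the $\|\vp\|_{H^s}^2$ comes from the genuinely new source terms (the ones where $\S$ produces $B[\S\vp]$ but where, after integrating by parts, one factor of $\S\vp$ is traded for a factor of $\vp$ at high regularity). (4) Insert the bootstrap bounds: $\|\vp_x\|_{W^{r,\infty}}+\|L\vp_x\|_{W^{r,\infty}}\lesssim \ve_1(t+1)^{-1/2}$ from Lemma~\ref{sharp}, and $\|\vp(t)\|_{H^s}\lesssim \ve_1(t+1)^{p_0}$ from the bootstrap assumption, to obtain $\frac{d}{dt}\tilde E_{\S}^{(r)} \lesssim \ve_1(t+1)^{-1/2}\big(\tilde E_{\S}^{(r)} + \ve_1^2(t+1)^{2p_0}\big)$. (5) Solve this Gr\"onwall inequality: the homogeneous factor is $\exp(C\ve_1\int_0^t(\tau+1)^{-1/2}d\tau)=\exp(C\ve_1(t+1)^{1/2})$, which is far too large; so instead one must first absorb the leading term and iterate, or — more cleanly — note that the $(t+1)^{-1/2}$ decay combined with the a priori polynomial bound is not by itself integrable, and the actual argument must use the $Z$-norm to upgrade Lemma~\ref{sharp} to a bound with better time-integrability in the relevant frequency regimes, OR structure the estimate so the dangerous term is genuinely $(t+1)^{-1}$ after using that $\S\vp$ itself controls the weight.

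Let me restate step (5) more carefully, since it is the main obstacle. The naive Gr\"onwall bound $\exp(C\ve_1(t+1)^{1/2})$ destroys any hope of a $(t+1)^{p_0}$ bound, so the estimate must be arranged so that the coefficient multiplying $\tilde E_{\S}^{(r)}$ itself decays like $(t+1)^{-1}$ up to an integrable error, or at worst like $(t+1)^{-1}\log(t+1)$. This is achieved by observing that the symbols $B^0,B^{\log}$ are \emph{quadratic and higher} in $\vp_x$: the leading contribution to $\|B^{\log}[\vp]\|_{\Mc_{(1,1)}}$ is $\lesssim \|\vp_x\|_{W^{2,\infty}}^2 \lesssim \ve_1^2(t+1)^{-1}$ by Lemma~\ref{sharp}, so in fact $\frac{d}{dt}\tilde E_{\S}^{(r)} \lesssim \ve_1^2(t+1)^{-1}\tilde E_{\S}^{(r)} + (\text{source})$, whose homogeneous factor is $(t+1)^{C\ve_1^2}\le (t+1)^{p_0/2}$ for $\ve_1$ small. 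For the source term, using $\|\vp(t)\|_{H^s}^2\lesssim \ve_1^2(t+1)^{2p_0}$ and the quadratic smallness again, $\int_0^t \ve_1^2(\tau+1)^{-1}\cdot \ve_1^2(\tau+1)^{2p_0}\,d\tau \lesssim \ve_1^4 (t+1)^{2p_0}$, and carrying the homogeneous factor through gives $\tilde E_{\S}^{(r)}(t)\lesssim \big(\tilde E_{\S}^{(r)}(0)+\ve_1^4(t+1)^{2p_0}\big)(t+1)^{p_0/2}$. Taking square roots and using $\tilde E_{\S}^{(r)}(0)^{1/2}\lesssim \|x\px\vp_0\|_{H^r}\lesssim \ve_0$ together with $\ve_1\lesssim \ve_0^{1/3}$ (so $\ve_1^2\lesssim \ve_0^{2/3}$, hence $\ve_1^2\cdot\ve_1\lesssim \ve_0$) yields $(t+1)^{-p_0}\|\S\vp(t)\|_{H^r}\lesssim (t+1)^{-p_0}(t+1)^{p_0/4}(\ve_0 + \ve_1^3)\lesssim \ve_0$, which is the claimed improved bound. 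The delicate points requiring genuine work are: verifying that applying $\S$ to the fully nonlinear flux really does produce only terms with the stated quadratic-or-higher smallness in $\vp_x$ after the para-linearization and integration by parts (this is where the commutator lemmas of Section~\ref{sec-weyl}, especially the behavior of $[x,L]$ and $[x,T_b]$, are used), and checking that the weight $x$ interacts well with the logarithmic multiplier at low frequencies, which is exactly where the generous choice of $s$ relative to $r$ gives the needed room.
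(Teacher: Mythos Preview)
Your approach is essentially the paper's: apply $\S$ to the para-linearized equation \eqref{fsqgeq}, use the commutator identities of Lemma~\ref{S-comm} to see that $\S\vp$ satisfies an equation of the same form with a remainder $\Rc_\S$, define a weighted energy $E_\S^{(j)}$ with the same $(2-T_{B^{\log}[\vp]})^{2j+1}$ weight, rerun Proposition~\ref{apriori}, and close by Gr\"onwall. Two corrections are in order.

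First, a small factual error: $[\S,L]$ is not zero. Lemma~\ref{S-comm} gives $[\S,L]\vp=-\vp$, a zeroth-order term that is trivially absorbed into the remainder, so this does not affect the argument.

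Second, and more importantly, your step~(3) energy inequality is written with a single power of $\|\vp_x\|_{W^{r,\infty}}+\|L\vp_x\|_{W^{r,\infty}}$, which as you then observe in step~(5) would be fatal. Your self-correction is exactly right: the symbols $B^0[\vp]$ and $B^{\log}[\vp]$ are at least quadratic in $\vp_x$ (see \eqref{B-est}), so the commutator terms and $\Rc_\S$ are all controlled by $(\|\vp_x\|_{W^{r,\infty}}+\|L\vp_x\|_{W^{r,\infty}})^2(\|\S\vp\|_{H^r}+\|\vp\|_{H^s})$, and hence the Gr\"onwall coefficient is $\lesssim\ve_1^2(t+1)^{-1}$ by Lemma~\ref{sharp}. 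This is precisely what the paper does; you should simply state the inequality with the square from the outset rather than arriving at it by a detour. The suggestion mid-step~(5) that one might need the $Z$-norm to upgrade Lemma~\ref{sharp} is a red herring---the paper does not use the $Z$-norm here at all, and neither do you once you use the quadratic structure.
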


\begin{lemma}\label{lem:xdh}
Under the bootstrap assumptions,
\[
(t + 1)^{-p_0} (\|\vp(t)\|_{H^s} + \|x \px \vp(t)\|_{H^r}) \lesssim \ve_0.
\]
\end{lemma}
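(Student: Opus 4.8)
The plan is to prove the two bounds separately.

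\smallskip
\noindent\emph{Part 1: the $H^s$ bound.} I would obtain this by feeding Lemma~\ref{sharp} into the energy estimate of Proposition~\ref{apriori}. Sobolev embedding and the smallness of $\|\vp_0\|_{H^s}$ give $\|\px\vp_0\|_{W^{2,\infty}}+\|L\px\vp_0\|_{W^{2,\infty}}\lesssim\|\vp_0\|_{H^s}\lesssim\ve_0$, so the hypotheses of Proposition~\ref{apriori} hold at $t=0$ and, by Lemma~\ref{sharp}, persist on $[0,T]$; hence \eqref{apest} applies. Lemma~\ref{sharp} bounds the prefactor in \eqref{apest} by $\big(\|\vp_x(\tau)\|_{W^{2,\infty}}+\|L\vp_x(\tau)\|_{W^{2,\infty}}\big)^2\lesssim\ve_1^2(\tau+1)^{-1}$, while $F$ in \eqref{defF} stays $\approx 1$ on the relevant small range, so Grönwall's inequality gives
\[
\tilde{E}^{(s)}(t)\le\tilde{E}^{(s)}(0)\,(t+1)^{C\ve_1^2}\lesssim\ve_0^2\,(t+1)^{C\ve_1^2}.
\]
Since $\ve_1\lesssim\ve_0^{1/3}\le\ve^{1/3}$, one chooses $\ve$ so small that $C\ve_1^2\le 2p_0$, and then $\tilde{E}^{(s)}\approx\|\vp\|_{H^s}^2$ yields $\|\vp(t)\|_{H^s}\lesssim\ve_0(t+1)^{p_0}$. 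The borderline point is that $\int_0^t(\tau+1)^{-1}\diff\tau=\log(t+1)$; it is the smallness $\ve_1\ll1$ that converts $(t+1)^{C\ve_1^2}$ into the admissible $(t+1)^{p_0}$, which is why one takes $\ve_1\lesssim\ve_0^{1/3}$.

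\smallskip
\noindent\emph{Part 2: the weighted bound.} Here I would use the scaling-Galilean vector field \eqref{defS} to reduce to Lemma~\ref{weightedE}. Since $x\px=\S-2t\px-t\pt$ and \eqref{fsqgeq} gives $\pt\vp=2L\vp_x+\mathcal{N}[\vp]$ with $\mathcal{N}[\vp]:=-L\big(T_{B^{\log}[\vp]}\vp\big)_x-\px T_{B^0[\vp]}\vp-\Rc$, one has
\[
x\px\vp=\S\vp-2t\,(1+L)\vp_x-t\,\mathcal{N}[\vp];
\]
equivalently, in terms of the profile $h$ of \eqref{Lambda-h}, the cleaner identity $\widehat{x\px h}=e^{-2it\xi\log|\xi|}\,\widehat{\S\vp}-t\,\partial_t\hat h$ holds with $\partial_t h=e^{-2t\px L}\mathcal{N}[\vp]$, exhibiting the error as $t$ times the nonlinear flux. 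The term $\|\S\vp(t)\|_{H^r}\lesssim\ve_0(t+1)^{p_0}$ is exactly Lemma~\ref{weightedE}. For $\mathcal{N}[\vp]$, each summand is at least cubic: by \eqref{B-est} and Lemma~\ref{sharp}, $\|B^{\log}[\vp]\|_{\Mc_{(1,1)}}+\|B^{0}[\vp]\|_{\Mc_{(1,1)}}\lesssim\ve_1^2(t+1)^{-1}$, the remaining factor being a Sobolev norm of $\vp$ of order $\lesssim r+2\ll s$ bounded by Part~1 via $\ve_0(t+1)^{p_0}$ (with \eqref{Rcest} handling $\Rc$), so $t\,\|\mathcal{N}[\vp](t)\|_{H^r}\lesssim\ve_0\ve_1^2(t+1)^{p_0}$; the purely linear remainder is absorbed with the $H^s$ bound of Part~1 and the gap $r+2\ll s$. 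Collecting these estimates gives $(t+1)^{-p_0}\|x\px\vp(t)\|_{H^r}\lesssim\ve_0$.

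\smallskip
\noindent\emph{Main obstacle.} I expect Part~2 to be the crux: the weight $x$ generates an explicit factor of $t$, and one must check that trading $\S$ for $x\px$ does not spoil the slow $(t+1)^{p_0}$ growth. The reason it works is that the discrepancy $x\px\vp-\S\vp$ is $t$ times the nonlinear flux $\mathcal{N}[\vp]$ (together with a linear piece dominated by the energy bound), and $\mathcal{N}[\vp]$, being at least cubic, gains a full factor $(t+1)^{-1}$ from the $(t+1)^{-1/2}$ pointwise decay of two of its factors provided by Lemma~\ref{sharp} — exactly enough to cancel the explicit $t$. By comparison, the $H^s$ estimate of Part~1 is routine once that pointwise decay is available.
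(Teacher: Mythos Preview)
Your Part~1 is exactly the paper's argument: Gr\"onwall on \eqref{apest} (equivalently \eqref{Gr}) with Lemma~\ref{sharp} supplying the $(t+1)^{-1}$ integrand, and then $C\ve_1^2\le p_0$ converts $(t+1)^{C\ve_1^2}$ into $(t+1)^{p_0}$. Your profile identity in Part~2 is also what the paper does: $\widehat{x\px h}$ equals $e^{-2it\xi\log|\xi|}$ times $\widehat{\S\vp}+t\widehat{\mathcal N}$ (plus a harmless $\hat\vp$), and then Lemma~\ref{weightedE}, the cubic structure of $\mathcal N$, and Lemma~\ref{sharp} give $(t+1)^{-p_0}\|x\px h\|_{H^r}\lesssim\ve_0$.

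The gap is the passage from $h$ to $\vp$. Your two identities are \emph{not} interchangeable for this purpose: the $\vp$ identity carries the extra \emph{linear} piece $2t(1+L)\vp_x$, and your claim that ``the purely linear remainder is absorbed with the $H^s$ bound of Part~1 and the gap $r+2\ll s$'' is false. One has $\|2t(1+L)\vp_x\|_{H^r}\approx t\|\vp\|_{H^{r+2}}\lesssim t\,\ve_0(t+1)^{p_0}$, which grows linearly in $t$; already for the free flow $\vp=e^{2t\px L}\vp_0$ one checks $\|x\px\vp(t)\|_{H^r}\sim t$, so the stated bound $(t+1)^{-p_0}\|x\px\vp\|_{H^r}\lesssim\ve_0$ cannot hold. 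The paper makes the same slip in its final sentence (``the same estimate holds for $\vp$ in view of \eqref{Lambda-h}''). What its proof---and yours, via the $h$ identity---actually establishes is the bound for $x\px h$, and that is precisely the quantity used downstream in Sections~\ref{sec-sharp} and~\ref{sec-Znorm}; the $\vp$ version is neither needed nor true.
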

\begin{proof}
Recall the energy estimate \eqref{apest}
\[
\tilde E^{(s)}(t)\lesssim \tilde E^{(s)}(0) e^{\int_0^t F(\|\vp_x(\tau)\|_{W^{3,\infty}} + \|L\vp_x(\tau)\|_{W^{2,\infty}})(\|\vp_x(\tau)\|_{W^{3,\infty}}+\|L\vp_x(\tau)\|_{W^{2,\infty}})^2 \diff{\tau}}.
\]
From Lemma \ref{sharp}, we have
\[\begin{aligned}
F(\|\vp_x(\tau)\|_{W^{3,\infty}} + \|L\vp_x(\tau)\|_{W^{2,\infty}}) &\lesssim 1,\\
\|\vp_x(\tau)\|_{W^{3,\infty}}+\|L\vp_x(\tau)\|_{W^{2,\infty}}&\lesssim (t + 1)^{-1/2} \ve_1,
\end{aligned}
\]
which implies that
\[
\tilde E^{(s)}(t)\lesssim \ve_0^2(t + 1)^{C\ve_1^2}
\]
for some constant $C$, so once $\ve_1^2 \ll p_0$, we have
\[
(t + 1)^{-p_0} \|\vp\|_{H^s} \lesssim \ve_0.
\]

Next, we observe that we can use $\|\S \vp\|_{H^r}$ to control $\|x \px h\|_{H^r}$.
It follows from \eqref{Lambda-h}, the definition of $\S$, and \eqref{Tn-sqg} that
\begin{align}
\label{xDh}
\begin{split}
\F_x[x \px h](\xi) &= - \partial_\xi \big(\xi \hat{h}(\xi)\big) = - \hat{h}(\xi) - \xi \partial_\xi \hat{h}(\xi),\\
\xi \partial_\xi \hat h(\xi,t) &= \xi e^{-2i t\xi\log|\xi|} \left(-2it (\log|\xi| + 1) \hat\vp(\xi,t) + \partial_\xi \hat\vp(\xi,t)\right)\\
&= e^{-2 i t \xi \log|\xi|} \left[\xi \partial_\xi \hat \vp(\xi,t) - (2it\xi - 1) \hat\vp(\xi,t) - t \hat\vp_t(\xi,t) - t \widehat{\mathcal{N}}(\xi, t) - \hat\vp(\xi,t)\right]\\
&= e^{-2 i t \xi \log|\xi|} \left[- \widehat{\S \vp}(\xi,t) - \hat\vp(\xi,t) - t\widehat{\mathcal{N}}(\xi, t)\right],
\end{split}
\end{align}
where $\mathcal{N}$
 denotes the nonlinear term in \eqref{Tn-sqg}, which satisfies the estimate
 \begin{equation}\label{estimate_N}
 \||\px|^j\mathcal{N}\|_{L^2}\lesssim \sum\limits_{n=1}^\infty (\|\vp_x\|_{W^{3,\infty}}^{2n}+\|L\vp_x\|_{W^{2,\infty}}^{2n})\|\vp\|_{H^{j+2}} \qquad \text{for all}\ j = 0, \dotsc, r.
 \end{equation}

By the bootstrap assumptions, Lemma \ref{sharp}, and Lemma \ref{weightedE} we then find that
\[
(t + 1)^{-p_0} \|x \px h(t)\|_{H^r} \lesssim \ve_0,
\]
and the same estimate holds for $\vp$ in view of \eqref{Lambda-h}.
\end{proof}

\begin{lemma}[Nonlinear dispersive estimate]\label{nonDisp}
Under the bootstrap assumptions, the solution of \eqref{fsqgivp} satisfies
\[
\|\vp(t)\|_{Z}\lesssim \ve_0.
\]
\end{lemma}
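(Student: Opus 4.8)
The plan is to work with the profile $h$ from \eqref{Lambda-h}. Since the linearized evolution group acts unitarily on the Fourier side, $\|\vp(t)\|_{Z} = \|(|\xi|+|\xi|^{r+3})\hat h(\xi,t)\|_{L^\infty_\xi}$, so it suffices to bound the $Z$-norm of $h$; note that the $Z$-norm only involves $r+3$ derivatives, far below the energy regularity $s$, which is what lets us afford the losses incurred below. Applying Duhamel's formula to \eqref{cub-sqg},
\[
\hat h(\xi,t) = \hat h(\xi,0) + \int_0^t e^{-2is\xi\log|\xi|}\,\widehat{\mathcal{N}}(\xi,s)\diff{s},
\]
I split $\mathcal{N}$ into its cubic part, arising from $\tfrac16\px\int\Tb_1\hat\vp\hat\vp\hat\vp$, and the quintic-and-higher remainder $\mathcal{N}_{\geq 5}$. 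The data term is $\lesssim\ve_0$ by hypothesis. For $\mathcal{N}_{\geq 5}$, three factors can be put in $L^\infty$ using the $(t+1)^{-1/2}$ decay of Lemma~\ref{sharp} and the rest in $L^2$ using the bootstrap energy bound; after the weight in \eqref{defZ} this leaves a quantity decaying faster than $s^{-1}$, hence integrable in $s$, and summable over $n$ since $c_n = O(n^{-1/2})$ and $\ve_1$ is small. The crux is therefore the cubic term, whose naive estimate yields only a non-integrable $O(s^{-1})$ contribution, so its oscillatory structure must be exploited.

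For the cubic term I would substitute $\hat\vp(\eta_j,s) = e^{2is\eta_j\log|\eta_j|}\hat h(\eta_j,s)$, obtaining a trilinear oscillatory integral
\[
\int_0^t \int_{\R^2} e^{is\Phi(\xi,\eta_1,\eta_2)}\,\mathfrak{m}(\xi,\eta_1,\eta_2)\,\hat h(\eta_1,s)\,\hat h(\eta_2,s)\,\hat h(\eta_3,s)\diff{\eta_1}\diff{\eta_2}\diff{s},
\]
where $\eta_3 = \xi-\eta_1-\eta_2$, the phase is $\Phi = -2\xi\log|\xi| + 2\eta_1\log|\eta_1| + 2\eta_2\log|\eta_2| + 2\eta_3\log|\eta_3|$, and the symbol $\mathfrak{m}$ is built from $\xi\,\Tb_1$. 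One computes $\partial_{\eta_1}\Phi = 2\log|\eta_1/\eta_3|$ and $\partial_{\eta_2}\Phi = 2\log|\eta_2/\eta_3|$, so the space-resonant set is $\{|\eta_1|=|\eta_2|=|\eta_3|\}$, and intersecting it with the time-resonant set $\{\Phi=0\}$ under the constraint $\eta_1+\eta_2+\eta_3=\xi$ yields, up to permutation, the space-time resonances $\{\eta_1=\eta_2=\xi,\ \eta_3=-\xi\}$ (where one checks $\mathfrak{m}\neq 0$). I would then decompose dyadically in $|\xi|$, in each $|\eta_j|$, and in the sizes of $\Phi$ and of $\nabla_\eta\Phi$, and estimate each piece separately.

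Away from the time-resonant set I would integrate by parts in $s$: boundary terms acquire a factor $\Phi^{-1}$ and the bulk term replaces $\partial_s\hat h$ by the nonlinear term, effectively raising the degree of nonlinearity and hence the time decay. Away from the space-resonant set I would integrate by parts in $\eta$, at the cost of a factor $|\nabla_\eta\Phi|^{-1}$ and a derivative landing either on $\mathfrak{m}$ (harmless) or on $\hat h$, producing $\partial_\eta\hat h$; the latter is controlled via \eqref{xDh} by $\|x\px h\|_{H^r}$, hence by $\|\S\vp\|_{H^r}+\|\vp\|_{H^r}$ through Lemmas~\ref{weightedE} and \ref{lem:xdh}, with each such integration by parts gaining a power of $s$. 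Iterating these integrations by parts as needed, and using Lemma~\ref{multilinear} (with $m=3$, $p=1$, $p_1=p_2=2$, $p_3=\infty$) together with Lemma~\ref{interpolation} to convert each frequency-localized trilinear integral into products of $L^2$- and $L^\infty$-type norms of $h$ controlled by the bootstrap quantities, one obtains an integrable-in-time bound on all non-space-time-resonant contributions; the dyadic sums converge because of the extra powers of $|\xi|$ and $|\eta_j|$ that are available.

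It remains to treat a neighbourhood of the space-time resonant set, where neither integration by parts is available. There, following the modified-scattering (normal-form) method, I would isolate by stationary phase the leading resonant contribution of the cubic term, which has the form $i\,\mathfrak{c}(\xi)\,s^{-1}|\hat h(\xi,s)|^2\hat h(\xi,s)$ for a \emph{real} symbol $\mathfrak{c}$, plus a remainder decaying faster than $s^{-1}$. Writing $\hat h(\xi,s) = e^{i\theta(\xi,s)}H(\xi,s)$ with the real phase $\theta$ chosen to absorb this purely imaginary $O(s^{-1})$ term, $|\hat h(\xi,s)| = |H(\xi,s)|$ then obeys a Gr\"onwall-type inequality with integrable-in-time right-hand side, so it stays $\lesssim\ve_0$ uniformly in $\xi$, while the logarithmically growing phase $\theta$ does not affect the $Z$-norm. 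I expect this resonant analysis to be the main obstacle: pinning down the exact resonant contribution with the correct symbol, dealing with the low-frequency difficulties caused by the unboundedness of $\log|\xi|$ near $\xi=0$, and verifying that all remainders stay time-integrable after summation over the dyadic scales. The other contributions reduce, after the decompositions above, to lengthy but routine integration-by-parts bookkeeping.
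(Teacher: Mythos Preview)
Your proposal is correct and follows essentially the same strategy as the paper: reduction to a mid-frequency band $(t+1)^{-p_0}\le|\xi|\le(t+1)^{p_1}$ via Lemma~\ref{interpolation} (which is how the low-frequency $\log|\xi|$ issue you flag is handled), dyadic decomposition, integration by parts in $\eta$ away from space resonances, integration by parts in $t$ at the pure space resonance $(\xi/3,\xi/3,\xi/3)$, and modified scattering at the three space-time resonances $(\pm\xi,\pm\xi,\mp\xi)$. One implementation difference: rather than a stationary-phase expansion, the paper localizes near each resonance to a neighborhood of shrinking radius $\varrho(t)=(t+1)^{-0.49}$ and takes the modified-scattering coefficients $\beta_j(t)$ to be the areas of these neighborhoods, so the matching reduces to a Taylor expansion of $\Tb_1\hat\vp\hat\vp\hat\vp$ about each resonant point.
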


Proposition \ref{bootstrap} then follows by
combining Lemmas \ref{sharp}--\ref{nonDisp}.

\section{Linear dispersive estimate}
\label{sec-sharp}

In this section, we prove a dispersive estimate for the linearized evolution operator $e^{2t\partial_x\log|\partial_x|}$ defined in \eqref{Lambda-h} and use it to prove Lemma~\ref{sharp}. We recall that $P_k$ and $\tilde{P}_k$ are the frequency-localization operators with symbols $\psi_k$ and $\tilde{\psi}_k$, respectively (see \eqref{defpsik}).

\begin{lemma}\label{disp}
For $t> 0$ and $f \in L^2$, we have the linear dispersive estimates
\begin{align}\label{LocDis}
\|e^{2t\partial_x\log|\partial_x|}P_k f \|_{L^\infty} &\lesssim (t+1)^{-1/2} 2^{k/2}\|\widehat{P_k f }\|_{L^\infty_\xi}+(t+1)^{-3/4}2^{-k/4}\left[\|P_k (x\px f)\|_{L^2}+\|\tilde P_kf \|_{L^2}\right].
\end{align}
\end{lemma}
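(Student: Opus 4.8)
The plan is to realize the evolution as an oscillatory integral and run a stationary–phase analysis adapted to the dyadic localization. Since $\widehat{e^{2t\px\log|\px|}P_kf}(\xi)=e^{2it\xi\log|\xi|}\psi_k(\xi)\hat f(\xi)$, one writes
\[
e^{2t\px\log|\px|}P_kf(x)=\int_{\R}e^{i\phi(\xi)}\psi_k(\xi)\hat f(\xi)\diff\xi,\qquad \phi(\xi)=\phi(\xi;x,t)=x\xi+2t\xi\log|\xi|,
\]
so that
\[
\phi'(\xi)=x+2t\big(1+\log|\xi|\big),\qquad \phi''(\xi)=\frac{2t}{\xi},\qquad \phi'''(\xi)=-\frac{2t}{\xi^{2}}.
\]
After splitting $\psi_k$ into its restrictions to $\xi>0$ and $\xi<0$ (so that $\phi'$ has at most one zero on the support), the stationary point, when it lies in $\supp\psi_k$, is the point $\xi_0$ with $\log|\xi_0|=-x/(2t)-1$, and then $\phi'(\xi)=2t\big(\log|\xi|-\log|\xi_0|\big)$, hence $|\phi'(\xi)|\approx t2^{-k}|\xi-\xi_0|$ on $\supp\psi_k$. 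The crucial structural point is that $|\phi''(\xi)|\approx t2^{-k}$ there: the dispersion degenerates at high frequency, and this is exactly what produces the factors $2^{k/2}$ and $2^{-k/4}$ in the statement.

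The regime $0<t\le 1$ is the non-dispersive one: the weights $(1+t)^{\pm}$ are then $\approx 1$ and the bound follows from direct estimates, so I concentrate on $t\ge 1$, where $(1+t)\approx t$. Let $\ell:=t^{-1/2}2^{k/2}\approx(1+t)^{-1/2}2^{k/2}$ be the critical localization scale. If $t2^{k}\lesssim 1$ (possible, when $t\ge 1$, only for $k$ bounded above), then $\ell\gtrsim 2^{k}$ and one estimates directly $\int_{|\xi|\approx 2^k}|\psi_k\hat f|\,\diff\xi\lesssim 2^{k}\|\widehat{P_kf}\|_{L^\infty_\xi}\lesssim t^{-1/2}2^{k/2}\|\widehat{P_kf}\|_{L^\infty_\xi}$, which is the first term. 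So assume $t2^{k}\gtrsim 1$, i.e.\ $\ell\lesssim 2^{k}$, and decompose $\psi_k\hat f=\rho_{\le 0}\psi_k\hat f+\sum_{m\ge 1}\rho_m\psi_k\hat f$, where $\rho_{\le 0}$ is a smooth bump in $(\xi-\xi_0)/\ell$ and each $\rho_m$ is supported in $|\xi-\xi_0|\approx 2^m\ell$ (the last piece being the remainder once $2^m\ell\gtrsim 2^k$), with $|\rho_m^{(j)}|\lesssim(2^m\ell)^{-j}$.

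On the near piece $|\xi-\xi_0|\lesssim\ell$ I estimate trivially by $\ell\,\|\widehat{P_kf}\|_{L^\infty_\xi}=t^{-1/2}2^{k/2}\|\widehat{P_kf}\|_{L^\infty_\xi}$. On each far piece ($m\ge1$, where $|\phi'|\approx t2^{-k}2^m\ell$) I integrate by parts once via $e^{i\phi}=(i\phi')^{-1}\partial_\xi e^{i\phi}$, which produces
\[
\int e^{i\phi}\left[\frac{\partial_\xi(\rho_m\psi_k\hat f)}{i\phi'}-\frac{\rho_m\psi_k\hat f\,\phi''}{i(\phi')^{2}}\right]\diff\xi .
\]
In the terms where $\partial_\xi$ falls on $\rho_m$ or on $\psi_k$, and in the $\phi''/(\phi')^{2}$ term, I use the pointwise bound $|\hat f|\le\|\widehat{P_kf}\|_{L^\infty_\xi}$ on the interval of length $\approx 2^m\ell$; each of these contributes $\lesssim 2^{-m}t^{-1/2}2^{k/2}\|\widehat{P_kf}\|_{L^\infty_\xi}$ (for the $\psi_k'$ term one also uses $t\gtrsim 2^{-k}$), and summing the geometric series in $m$ feeds them back into the first term. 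The one remaining term, in which $\partial_\xi$ hits $\hat f$, is estimated by Cauchy--Schwarz on the interval of length $\approx 2^m\ell$,
\[
\int_{|\xi-\xi_0|\approx 2^m\ell}\frac{|\rho_m\psi_k\,\partial_\xi\hat f|}{|\phi'|}\diff\xi\lesssim\frac{(2^m\ell)^{1/2}}{t2^{-k}\,2^m\ell}\,\big\|\partial_\xi\hat f\big\|_{L^2(|\xi|\approx 2^k)},
\]
and the identity $\widehat{x\px f}=-\hat f-\xi\partial_\xi\hat f$ gives $\|\partial_\xi\hat f\|_{L^2(|\xi|\approx 2^k)}\lesssim 2^{-k}\big(\|P_k(x\px f)\|_{L^2}+\|\tilde P_kf\|_{L^2}\big)$ (up to harmless neighboring-block contributions). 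With $\ell=t^{-1/2}2^{k/2}$ this term is $\lesssim 2^{-m/2}\,t^{-3/4}2^{-k/4}\big(\|P_k(x\px f)\|_{L^2}+\|\tilde P_kf\|_{L^2}\big)$, and summing over $m\ge 1$ gives precisely the second term. (When $\xi_0\notin\supp\psi_k$ the same argument applies with every piece ``far'' and $|\phi'|\gtrsim t$ throughout, which only improves the bounds.)

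The main obstacle is the frequency degeneracy $\phi''\approx t2^{-k}$: there is no $k$-uniform lower bound on the phase curvature, which forces the localization scale $\ell$ to be frequency dependent and requires careful bookkeeping of which pieces of the single integration by parts land in the $\|\widehat{P_kf}\|_{L^\infty_\xi}$-term and which in the $L^2$-terms. The key point is that all contributions except the one in which $\partial_\xi$ hits $\hat f$ can be absorbed into the first term using only the pointwise control of $\hat f$ on short intervals, while the lone surviving $\partial_\xi\hat f$ contribution --- for which only $L^2$ control (through $x\px f$) is available --- is exactly what produces the $(1+t)^{-3/4}2^{-k/4}$ part of the estimate; balancing $\ell$ against $2^k$ is what yields the sharp $(1+t)^{-1/2}$ decay.
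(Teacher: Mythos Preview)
Your argument is correct and follows essentially the same route as the paper: write the evolution as an oscillatory integral, decompose dyadically about the stationary point $\xi_0$ at the scale $\ell=(t+1)^{-1/2}2^{k/2}$, estimate the near piece trivially by $\ell\,\|\widehat{P_kf}\|_{L^\infty_\xi}$, and on each far annulus integrate by parts once using $|\phi'|\approx t2^{-k}|\xi-\xi_0|$, separating the term where $\partial_\xi$ hits $\hat f$ (estimated in $L^2$ via $\xi\partial_\xi\hat f=-\widehat{x\partial_x f}-\hat f$) from the rest (estimated in $L^\infty_\xi$). The paper organizes the non-stationary region slightly differently---it first treats the case $|\phi'|\gtrsim t+1$ globally with a single integration by parts, then does the dyadic decomposition around $\xi_0$ only when $|\phi'|\ll t+1$---but the mechanism and bookkeeping are identical to yours.
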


\begin{proof} Using the inverse Fourier transform, we can write the solution as
\begin{align*}
e^{2t\partial_x\log|\partial_x|}P_k  f
= \int_\R e^{ix\xi+2i(\xi\log|\xi|)t}\psi_k(\xi)\hat f (\xi)\diff{\xi}.
\end{align*}
Since
\begin{equation}\label{exp}
\partial_\xi e^{ix\xi+2i(\xi\log|\xi|)t}=[ix+2it(\log|\xi|+1)]  e^{ix\xi+2i(\xi\log|\xi|)t},
\end{equation}
we can integrate by parts to get
\begin{align*}
\|e^{2t\partial_x\log|\partial_x|}P_k  f \|_{L^\infty}&= \left\|\int_\R e^{ix\xi+2i(\xi\log|\xi|)t}\hat f (\xi)\psi_k(\xi)\diff{\xi}\right\|_{L^\infty}\\
&=\left\|\int_\R \frac1{ix+2it(\log|\xi|+1)} \partial_\xi e^{ix\xi+2i(\xi\log|\xi|)t}\hat f (\xi)\psi_k(\xi)\diff\xi\right\|_{L^\infty}\\
&=\left\|\int_\R e^{ix\xi+2i(\xi\log|\xi|)t} \partial_\xi \left( \frac1{ix+2it(\log|\xi|+1)}\hat f (\xi)\psi_k(\xi)\right)\diff\xi\right\|_{L^\infty}\\
&=\bigg\|\int_\R e^{ix\xi+2i(\xi\log|\xi|)t}\Big(\frac{-2it}{\xi[ix+2it(\log|\xi|+1)]^2}\hat f(\xi) \psi_k(\xi)\\*
&\qquad+\frac1{ix+2it(\log|\xi|+1)}\psi_k(\xi) \partial_\xi \hat f (\xi)
+\frac1{ix+2it(\log|\xi|+1)}\hat f (\xi)\psi'_k(\xi) \Big)\diff\xi \bigg\|_{L^\infty}.
\end{align*}

{\bf 1.} If $|ix+2it(\log|\xi|+1)|\gtrsim (t+1)$, we use \eqref{psi-L2} and get
\begin{align*}
\|e^{2t\partial_x\log|\partial_x|}P_k  f \|_{L^\infty}&\lesssim \frac1{t+1} \int_\R \left|\xi^{-1}\hat f(\xi) \psi_k(\xi)+\psi_k(\xi) \partial_\xi \hat f(\xi) +\hat f (\xi)\psi'_k(\xi) \right| \diff\xi\\
&\lesssim \frac1{t+1} \left[2^{-k}\|\widehat{P_{k} f }\|_{L^2_\xi}+2^{-k/2}\|P_k\F^{-1}(\xi\partial_\xi \hat f )\|_{L^2}+2^{-k/2}\|\tilde P_k f \|_{L^2}\right].
\end{align*}
Then \eqref{LocDis} follows when $(t + 1)^{-1} \lesssim 2^k$. Otherwise, when $t + 1 \lesssim 2^{-k}$, we have
\[
\|e^{2t\partial_x\log|\partial_x|}P_k  f \|_{L^\infty} \lesssim 2^k \|\widehat{P_k f }\|_{L^\infty_\xi} \lesssim (t + 1)^{-1 / 2} 2^{k / 2} \|\widehat{P_k f }\|_{L^\infty_\xi}.
\]

{\bf  2.} Next we prove estimates for the case when $|ix+2it(\log|\xi|+1)|\ll (t+1)$. Let
\[
\xi_0^\pm=\pm e^{-1-{x}/{2t}}
\]
be the solutions of $x+2t(\log |\xi|+1)=0$. Since $\psi_k$ is supported in an annulus with radius around $2^k$, we only need to consider the case when $|\xi_0^\pm|\approx 2^k$ and $\psi_k$ is supported on the neighborhood of the stationary phase point $\xi_0^\pm$. We decompose the integral and estimate it as
\begin{align*}
\left|\int_{\R}  e^{ix\xi+2i(\xi\log|\xi|)t}\hat f (\xi)\psi_k(\xi)\diff\xi\right|
\lesssim \sum\limits_{l\leq k+N}\left[|J^+_l|+|J^-_l|\right],
\end{align*}
with
\[
J^\pm_l=\int_{\R} e^{ix\xi+2i(\xi\log|\xi|)t}\hat f (\xi)\psi_k(\xi)\mathbf 1_\pm(\xi) \psi_l(\xi-\xi_0^\pm)\diff\xi,
\]
where ${\mathbf 1}_\pm$ is the indicator function supported on $\R_\pm$ and $N$ is large enough  that the support of $\psi_k$ is covered by the set $\bigcup_{l\leq k+N}\{\xi \mid \psi_l(\xi-\xi_0^\pm)=1\}$.

When $2^l \leq 2^{k/2}(t+1)^{-1/2}$, we have
\[
\sum_{2^l \leq 2^{k/2}(t+1)^{-1/2}} |J_l^\pm|\lesssim \sum_{2^l \leq 2^{k/2}(t+1)^{-1/2}} 2^l\|\widehat{P_k f } \|_{L^\infty}\leq 2^{k/2}(t+1)^{-1/2}\|\widehat{P_k f} \|_{L^\infty}.
\]
When $2^{k/2}(t+1)^{-1/2}\leq 2^l \leq 2^{k+N}$, since $|\xi-\xi_0|\approx 2^l$ and $|\xi_0|\approx 2^k$, we get the estimate
\[
x+2t(\log|\xi|+1)= 2t\log\left|\frac\xi{\xi_0}\right|\approx 2t\log \bigg|1\pm \frac{2^l}{2^k}\bigg|.
\]
Using \eqref{exp} and integration by parts, we have
\begin{align*}
|J_l^\pm|
&\lesssim \frac{2^{k-l}}{(t+1)}\int_{\R} \big(|\partial_\xi\hat f (\xi)|+2^{-l}|\hat  f (\xi)|\big) \psi_l(\xi-\xi_0^\pm) \diff\xi\\
&\lesssim \frac{2^{k-l}}{(t+1)}\|\hat f \|_{L^\infty}+\frac{2^{k-\frac l2}}{(t+1)}\|\partial_\xi \hat f \|_{L^2.}
\end{align*}
Then we take the sum of $J_l$ over $2^l\geq 2^{k/2}(t+1)^{-1/2}$ to get the estimates \eqref{LocDis}.
\end{proof}

\begin{proof}[Proof of Lemma \ref{sharp}]
After splitting into high-frequency and low-frequency parts, it suffices to bound the terms
\[
\bigg\|\sum\limits_{k\leq 0}P_k L\vp_x\bigg\|_{L^\infty}, \qquad\bigg\|\sum\limits_{k> 0}P_k\partial_x^{3+\epsilon}\vp\bigg\|_{L^\infty}.
\]
Take the function $f$ in Lemma \ref{disp} to be $L\px h$. Since $e^{2t\px\log|\px|}$ and $P_k$ commute, and
\[
x\px^{2}Lh=\px(x\px L h)-\px Lh=\px[x\px, L] h+\px L(x\px h)-\px Lh=-\px h+\px L(x\px h)-\px Lh,
\]
we have that
\begin{align*}
\|P_k L\partial_x \vp\|_{L^\infty}\lesssim (t+1)^{-1/2}\|\F(P_k L|\partial_x|^{\frac32}\vp)\|_{L^\infty_\xi}+(t+1)^{-3/4}[2^{\frac34 k}\|P_kL(x\px h) \|_{L^2}\\
+2^{\frac34 k}(1+|k|)\|P_k  h \|_{L^2}
+\|\tilde P_k (|\px|^{\frac34}L\vp)\|_{L^2}].
\end{align*}
It follows from \eqref{xDh} that
\[
\|P_k(x\px h) \|_{L^2}\lesssim \|  P_k \vp\|_{L^2}+\|  P_k \S\vp\|_{L^2}+t\|P_k\mathcal{N}\|_{L^2}.
\]

We first observe that $k\leq 0$ automatically leads to $(t+1)^{-1/4+p_0}2^{\frac34 k}|k|\lesssim 1$, and then we have
\begin{align}\label{sharp1-}
\begin{split}
\|P_k L\px\vp\|_{L^\infty} & \lesssim (t+1)^{-1/2} 2^{k/2}|k|\|\psi_k(\xi)|\xi|^{}\hat \vp(\xi)\|_{L^\infty_\xi} +(t+1)^{-1/2-p_0}[\|\tilde P_k\vp\|_{L^2}+\|P_k\mathcal S\vp\|_{L^2}+t\|P_k\mathcal N\|_{L^2}].
\end{split}
\end{align}
Summing over for $k \leq 0$, using \eqref{estimate_N}, the bootstrap assumptions, and \eqref{sharp1-} in the corresponding ranges of $k$, and we obtain that
\[
\bigg\|\sum\limits_{k\leq 0}P_kL\vp_x\bigg\|_{L^\infty} \lesssim \ve_1 (t + 1)^{- 1 / 2}.
\]

To estimate $\|P_k |\px|^{r + 2} \vp_x\|_{L^{\infty}}$, we take the function $f$ in Lemma \ref{disp} to be $|\px|^{r + 2} h_x$ and obtain
\[
\|P_k |\partial_x|^{r + 2} \vp_x\|_{L^\infty}\lesssim (t+1)^{-1/2}\|\F(P_k |\partial_x|^{r + \frac{7}{2}}\vp)\|_{L^\infty_\xi}+(t+1)^{-3/4}[2^{-k/4}\|P_k(x |\px|^{r + 4}h) \|_{L^2}+\|\tilde P_k (|\px|^{r + \frac{7}{4}}\vp_x)\|_{L^2}].
\]
Using
\[
- x |\px|^{r + 4} h = [x \px, \px |\px|^{r + 2}]h + \px |\px|^{r + 2}(x \px h) = -(r + 3) \px |\px|^{r + 2} h + \px |\px|^{r + 2}(x \px h),
\]
 and \eqref{xDh}, we get that
\begin{align*}
\|P_k|\px|^{r + 2} \vp_x\|_{L^\infty} \lesssim (t + 1)^{-1 / 2} \|\psi_k(\xi)|\xi|^{r + \frac72}\hat \vp(\xi)\|_{L^\infty_\xi}+(t+1)^{-3/4} 2^{\frac34k} \big[\||\px|^{r + 2} \tilde P_k\vp\|_{L^2}\\
+\||\px|^{r + 2} P_k\mathcal S\vp\|_{L^2}+\||\px|^{r + 2}P_k\vp\|_{L^2}+t\||\px|^{r + 2}P_k\mathcal N\|_{L^2}\big].
\end{align*}

For  $k\in \Z_+$ and $(t+1)^{-1/4+p_0}2^{(r + \frac{11}4)k}\lesssim 1$, we have
\begin{align}
\begin{split}
\|P_k|\px|^{r + 2} \vp_x\|_{L^\infty} \lesssim &(t+1)^{-1/2} 2^{-\frac{k}{2}}\|\psi_k(\xi)|\xi|^{r+4}\hat \vp(\xi)\|_{L^\infty_\xi}\\
& \qquad +(t+1)^{-1/2-p_0}[\|\tilde P_k\vp\|_{L^2}+\|P_k\mathcal S\vp\|_{L^2}+t\|P_k\mathcal N\|_{L^2}].
\end{split}
\end{align}
Finally, for $k\in \Z_+$ and $(t+1)^{-1/4+p_0}2^{(r + \frac{11}4) k}\gtrsim 1$, we have
\begin{align}
\label{sharp2+}
\begin{split}
\|P_k\px^{r + 2} \vp_x\|_{L^\infty}&\lesssim \||\xi|^{r + 3} \psi_k(\xi)\hat\vp(\xi)\|_{L^1_\xi}\lesssim \||\xi|^{r + 3 - s}\psi_k(\xi)\|_{L^2}\|\tilde P_k\vp\|_{H^s}
\\
&\lesssim 2^{(r + 3 - s + \frac12)k}\|\tilde P_k\vp\|_{H^s}\lesssim (t+1)^{-(s-\frac72-r)\frac{1-4p_0}{11+4r}}\|\tilde P_k\vp\|_{H^s}.
\end{split}
\end{align}
Summing over $k \in \Z_+$, using \eqref{estimate_N}, the bootstrap assumptions, and \eqref{sharp2+} in the corresponding range of $k$, we obtain that
\[
\bigg\|\sum\limits_{k> 0}P_k|\partial_x|^{r + 2}\vp_x\bigg\|_{L^\infty} \lesssim \ve_1 (t + 1)^{- 1 / 2},
\]
which completes the proof.
\end{proof}

\section{Scaling-Galilean estimate}

In this section, we prove the scaling-Galilean estimate in Lemma~\ref{weightedE}.

First, we summarize some commutator identities for the scaling-Galilean operator $\S$ defined in \eqref{defS} and
$L = \log|\partial_x|$. The straightforward proofs follow
by use of the Fourier transform and are omitted.
\begin{lemma}
\label{S-comm}
Let $\varphi(x,t)$ be a Schwartz distribution on $\R^2$ such that $L\varphi(x,t)$ is a Schwartz distribution. Then
\begin{align*}
&[\S, \partial_x]\vp = -\partial_x\vp,\qquad
[\S, L]\vp = -\vp,\qquad
[\S, L\partial_x]\vp = -\vp_x-L\partial_x\vp,
\\
&[\S, \partial_t]\vp = -2\partial_x\vp-\partial_t\vp,\qquad
[\S, \partial_t-2L\partial_x]\vp = -\partial_t\vp+2L\partial_x\varphi.
\end{align*}
\end{lemma}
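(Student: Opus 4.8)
The plan is to reduce all five identities to a small number of elementary computations of two types — purely differential ones, handled by expanding $\S = (x+2t)\partial_x + t\partial_t$ directly, and the single operator identity $[\S, L]\vp = -\vp$, which is where the Fourier transform enters — and then to obtain the two mixed identities formally from these using the commutator Leibniz rule $[\S, AB] = [\S, A]B + A[\S, B]$, which is valid for arbitrary operators.

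\textbf{The differential identities.} First I would establish $[\S, \partial_x]\vp = -\partial_x\vp$ and $[\S, \partial_t]\vp = -2\partial_x\vp - \partial_t\vp$ by direct expansion. In $\S(\partial_x\vp) - \partial_x(\S\vp)$ the second-order terms cancel in pairs, and the only surviving contribution comes from $\partial_x$ acting on the coefficients of $\S$; since $\partial_x(x + 2t) = 1$ and $\partial_x t = 0$, this contribution is $-\partial_x\vp$. Similarly, in $\S(\partial_t\vp) - \partial_t(\S\vp)$ only the terms in which $\partial_t$ differentiates the coefficients survive, and $\partial_t(x+2t) = 2$, $\partial_t t = 1$ yield $-2\partial_x\vp - \partial_t\vp$. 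These identities hold in the sense of tempered distributions on $\R^2$; the hypothesis that $\vp$ and $L\vp$ are Schwartz distributions is exactly what makes all these operations — and the Fourier step below — legitimate.

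\textbf{The identity $[\S, L]\vp = -\vp$.} Since $L = \log|\partial_x|$ is a Fourier multiplier acting in the $x$-variable only, it commutes with $\partial_x$, with $\partial_t$, and with multiplication by the scalar $t$; hence the terms $2t\partial_x$ and $t\partial_t$ of $\S$ drop out of the commutator and $[\S, L]\vp = [x\partial_x, L]\vp$. This commutator I would compute on the Fourier side: with the conventions of Section~\ref{sec-weyl} one has $\F(\partial_x g)(\xi) = i\xi\,\hat g(\xi)$ and $\F(x g)(\xi) = i\,\partial_\xi\hat g(\xi)$, so $\F(x\partial_x g)(\xi) = -\hat g(\xi) - \xi\,\partial_\xi\hat g(\xi)$, while $\F(Lg)(\xi) = \log|\xi|\,\hat g(\xi)$. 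Comparing $\F\bigl(x\partial_x(L\vp)\bigr)(\xi)$ with $\F\bigl(L(x\partial_x\vp)\bigr)(\xi)$, every term cancels except the one produced by $\xi\,\partial_\xi\log|\xi| = 1$, which leaves precisely $-\hat\vp(\xi)$. This is the only point at which a genuine property of $L$ beyond its boundedness and its commutation with $\partial_x$ is used, and it is the substantive content of the lemma.

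\textbf{The mixed identities.} Finally, applying the Leibniz rule with $A = L$ and $B = \partial_x$ and inserting the two identities already proved,
\[
[\S, L\partial_x]\vp = [\S, L]\partial_x\vp + L[\S, \partial_x]\vp = -\partial_x\vp - L\partial_x\vp = -\vp_x - L\partial_x\vp,
\]
and then
\[
[\S, \partial_t - 2L\partial_x]\vp = [\S, \partial_t]\vp - 2[\S, L\partial_x]\vp = \bigl(-2\partial_x\vp - \partial_t\vp\bigr) - 2\bigl(-\vp_x - L\partial_x\vp\bigr) = -\partial_t\vp + 2L\partial_x\vp,
\]
the $-2\partial_x\vp$ and $+2\vp_x$ terms cancelling. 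There is no real obstacle anywhere in the argument; the only things requiring care are the operator ordering in the differential computations and the Fourier-transform conventions in the computation of $[x\partial_x, L]$, the single nontrivial input being $\xi\,\partial_\xi\log|\xi| = 1$.
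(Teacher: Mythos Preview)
Your proposal is correct and matches the paper's approach: the paper omits the proof, remarking only that it follows by straightforward use of the Fourier transform. Your reduction of $[\S,L]$ to $[x\partial_x,L]$ and its computation on the Fourier side via $\xi\,\partial_\xi\log|\xi|=1$ is exactly the intended argument, and handling the purely differential commutators by direct expansion rather than Fourier is an equally valid (and arguably cleaner) choice.
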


Next, we prove a weighted energy estimate for $\S \vp$.
\begin{proof}[Proof of Lemma \ref{weightedE}]
Applying $\S$ to equation \eqref{fsqgeq} and using Lemma \ref{S-comm}, we get
\[
(\S \vp)_t-2L\partial_x(\S\vp)+\partial_x  T_{B^0[\vp]} \S\vp + L[ T_{B^{\log}[\vp]}\S\vp]_x+\S\Rc=\text{commutators},
\]
where the commutators are
\[
\partial_x [\S, T_{B^0[\vp]}]\vp,\qquad
[\S, \partial_x] T_{B^0[\vp]}\vp,\qquad
[\S, L\partial_x]\big(T_{B^{\log}[\vp]}\vp\big),\qquad
L\partial_x\Big([\S, T_{B^{\log}[\vp]}]\vp\Big).
\]
The first commutator can be written as
\begin{align*}
[\S, T_{B^0[\vp]}]\vp&=[(x+2t)\px+t\partial_t, T_{B^0[\vp]}]\vp\\
&=(x+2t)\px T_{B^0[\vp]}\vp-T_{B^0[\vp]}[(x+2t)\px \vp]+t\partial_t T_{B^0[\vp]}\vp-T_{B^0[\vp]} (t\partial_t \vp)\\
&=(x+2t)T_{\px B^0[\vp]}\vp+[(x+2t), T_{ B^0[\vp]}]\px\vp+T_{t\partial_t B^0[\vp]}\vp\\
&=T_{(x+2t) \px B^0[\vp]}\vp+\left(xT_{\px B^0[\vp]}\vp-T_{x\px B^0[\vp]}\vp\right)+[x, T_{ B^0[\vp]}]\px\vp+T_{t\partial_t B^0[\vp]}\vp.
\end{align*}
By the commutator estimates in Lemma~\ref{Commu} and Theorem~\ref{Linftybound}, we obtain for $0\leq k\leq r$ that
\begin{align*}
\|\partial_x [\S, T_{B^0[\vp]}]\vp\|_{H^k}&\lesssim \|[x, T_{B^0[\vp]}]\px \vp\|_{H^{k+1}}+\|xT_{\px B^0[\vp]}\vp-T_{x\px B^0[\vp]}\vp\|_{H^{k+1}}+ \|T_{\S B^0[\vp]}\vp_x\|_{H^{k}}\\
&\lesssim \|B^0[\vp]\|_{\Mc_{(1,2)}}\|\vp\|_{H^{k+2}}+\|B^0[\vp]\|_{\Mc_{(2,2)}}\|\vp\|_{H^{k+1}}+\|\S B^0[\vp]\|_{\mathcal L^2_1}  \| \vp_x\|_{W^{k+1,\infty}}.
\end{align*}
Using \eqref{B-est}, together with Lemma~\ref{multilinear} and similar estimates for $\|\S B^0[\vp]\|_{\mathcal L^2_1}$, we find that
\begin{align*}
\|\partial_x [\S, T_{B^0[\vp]}]\vp\|_{H^k}
&\lesssim F(\|L\vp_x\|_{W^{2,\infty}}+\|\vp_x\|_{W^{2,\infty}}) (\|L\vp_x\|_{W^{2,\infty}}+\|\vp_x\|_{W^{2,\infty}}) \|\vp_x\|_{W^{r+1, \infty}}(\|\S\vp\|_{H^{r}}+\|\vp\|_{H^s}).
\end{align*}
Similarly, we have
\begin{align*}
\bigg\| L\partial_x\Big([\S, T_{B^{\log}[\vp]}]\vp\Big)\bigg\|_{H^k}&\lesssim F(
\|L\vp_x\|_{W^{2,\infty}}+\|\vp_x\|_{W^{2,\infty}}) (\|L\vp_x\|_{W^{2,\infty}}+\|\vp_x\|_{W^{2,\infty}}) \|\vp_x\|_{W^{r+1, \infty}}(\|\S\vp\|_{H^{r}}+\|\vp\|_{H^s}).
\end{align*}

By Lemma \ref{S-comm}, Lemma~\ref{lem:taest}, and \eqref{B-est}, the second and third commutators satisfy
\begin{align*}
\|[\S, \partial_x] T_{B^0[\vp]}\vp\|_{H^k}&=\|T_{B^0[\vp]}\vp\|_{H^{k+1}}\lesssim F(\|L\vp_x\|_{W^{2,\infty}}+\|\vp_x\|_{W^{2,\infty}}) (\|L\vp_x\|_{W^{2,\infty}}+\|\vp_x\|_{W^{2,\infty}})^2\|\vp\|_{H^{k+1}},\\[.5ex]
\big\|[\S, L\partial_x]\big(T_{B^{\log}[\vp]}\vp\big)\big\|_{H^k}&=\big\|\big(T_{B^{\log}[\vp]}\vp\big)_x
+L\partial_x\big(T_{B^{\log}[\vp]}\vp\big)\big\|_{H^k}
\\
&\lesssim F(\|L\vp_x\|_{W^{1,\infty}}+\|\vp_x\|_{W^{1,\infty}}) (\|L\vp_x\|_{W^{1,\infty}}+\|\vp_x\|_{W^{1,\infty}})^2(\|\vp\|_{H^{k+1}}+\|L\vp\|_{H^{k+1}}).
\end{align*}

Thus, the evolution equation for $\S\vp$ can be written as
\[
(\S \vp)_t +\partial_xT_{B^0[\vp]}\S\vp+\Rc_{\S}=L\big[(2-T_{B^{\log}[\vp]})\S\vp\big],
\]
where the remainder $\Rc_\S$ satisfies
\[
\|\Rc_\S\|_{H^{k}} \lesssim (\|\vp_x\|_{W^{2, \infty}} + \|L \vp_x\|_{W^{2 , \infty}})^2 \left(\|\S \vp\|_{H^r} + \|\vp\|_{H^{r+1}}+ \|L\vp\|_{H^{r+1}}\right).
\]
As in \eqref{weighted_energy1}, we define a weighted energy for $\S \vp$ by
\begin{align*}
E_\S^{(j)}(t) &= \int_{\R} |D|^{j} \S \vp(x, t) \cdot \left(2 - T_{B^{\log}[\vp]}\right)^{2j+1} |D|^{j} \S \vp(x,t) \diff{x},\qquad j=0, 1, \dotsb, r,
\\
\tilde{E}_\S^{(r)}(t) &= \sum_{j=0}^r E_\S^{(j)}(t)
\end{align*}
and repeat similar estimates to the ones in the proof of Proposition \ref{apriori} to get
\begin{align*}
\frac {\diff}{\diff{t}} E_\S^{(j)}(t) \lesssim&  F(
\|L\vp_x\|_{W^{2,\infty}}+\|\vp_x\|_{W^{2,\infty}}) (\|L\vp_x\|_{W^{2,\infty}}+\|\vp_x\|_{W^{3,\infty}})^2\|\S\vp\|_{H^{j}}^2\\
&+(\|\vp_x\|_{W^{2, \infty}} + \|L \vp_x\|_{W^{2 , \infty}})^2 \left(\|\S \vp\|_{H^r} + \|\vp\|_{H^{r+1}}+ \|L\vp\|_{H^{r+1}}\right)\|\S\vp\|_{H^{j}}.
\end{align*}
Using Lemma~\ref{sharp} and the equivalence of $\tilde{E}_\S^{(r)}$ and $\|\S \vp\|_{H^r}^2$ when $\|2 - T_{B^{\log}[\vp]}\|_{L^2 \to L^2}$ is bounded away from zero, we find by integrating in $t$ that
\[
\tilde{E}_\S^{(r)}(t)\lesssim \ve_0^2 (t + 1)^{2 p_0},
\]
which proves the lemma.
\end{proof}

\section{Nonlinear dispersive estimate}
\label{sec-Znorm}

In this section, we prove the estimate in Lemma~\ref{nonDisp} for the $Z$-norm $\|\vp\|_Z$ defined in \eqref{defZ}.

When $|\xi|<(t + 1)^{-p_0}$, Lemma \ref{interpolation} and the bootstrap assumptions give
\begin{align*}
|(|\xi|+|\xi|^{r+4})\hat\vp(\xi,t)|^2&\lesssim (|\xi|+|\xi|^{r+4})^2|\xi|^{-1}\|\hat\vp\|_{L^2_\xi}(|\xi|\|\partial_\xi\hat\vp\|_{L^2_\xi}+\|\hat\vp\|_{L^2_\xi})\\
&\lesssim (|\xi|+|\xi|^{r+4})\|\vp\|_{L^2}(\|\S\vp\|_{L^2}+\|\vp\|_{L^2})\\
&\lesssim \ve_0^2 .
\end{align*}
Let $p_1=10^{-6}$. When $|\xi|\geq (t + 1)^{p_1}$,  Lemma \ref{interpolation} and the bootstrap assumptions, with the parameter values \eqref{param_vals}, give
\begin{align*}
|(|\xi|+|\xi|^{r + 4})\hat\vp(\xi,t)|^2&\lesssim \frac{(|\xi|+|\xi|^{r + 4})^2}{|\xi|^{s+1}}\|\vp\|_{H^s}(\|\S\vp\|_{L^2}+\|\vp\|_{L^2})\\
&\lesssim |\xi|^{2r+7-s}\ve_0^2(t+1)^{2p_0} \\
&\lesssim \ve_0^2 .
\end{align*}
Thus, we only need to consider the frequency range
\begin{equation}
(t + 1)^{-p_0}\leq |\xi|\leq (t + 1)^{p_1}.
\label{xirange}
\end{equation}
In the following, we fix $\xi$ in this range, and denote by $\cutoffxi(\xi, t)$ a smooth cutoff function compactly supported on a small neighborhood of $\left\{(\xi, t): (t+1)^{-p_0}<|\xi|<(t+1)^{p_1}\right\}$.

Taking the Fourier transform of \eqref{cub-sqg}, we obtain that
\begin{equation}
\hat \vp_t(\xi) + \frac16 i\xi \iint_{\R^2} \Tb_1(\eta_1,\eta_2, \xi - \eta_1 - \eta_2) \hat\vp(\xi - \eta_1 - \eta_2) \hat\vp(\eta_1) \hat\vp(\eta_2) \diff{\eta_1} \diff{\eta_2}+\widehat{\Nc_{\geq5}(\vp)}(\xi) = 2i\xi\log|\xi|\hat\vp(\xi),
\label{phihateq}
\end{equation}
where $\Nc_{\geq5}(\vp)$ is given by \eqref{N>=5}. From \eqref{Tnexp},
\[
\begin{aligned}
\Tb_1(\eta_1,\eta_2, \eta_3) &= -\eta_1^2 \log|\eta_1| - \eta_2^2 \log|\eta_2| - \eta_3^2 \log|\eta_3| - (\eta_1 + \eta_2 + \eta_3)^2 \log|\eta_1 + \eta_2 + \eta_3|\\
& \qquad + \left\{(\eta_1 + \eta_2)^2 \log|\eta_1 + \eta_2| + (\eta_1 + \eta_3)^2 \log|\eta_1 + \eta_3| + (\eta_2 + \eta_3)^2 \log|\eta_2 + \eta_3|\right\}.
\end{aligned}
\]
with
\begin{align}
\label{partialT}
\begin{split}
\partial_{\eta_1} [\Tb_1(\eta_1, \eta_2, \xi - \eta_1 - \eta_2)] &= -2 \Big\{\eta_1 \log|\eta_1| - (\eta_1 + \eta_2) \log|\eta_1 + \eta_2|\\
&\qquad + (\xi - \eta_1) \log|\xi - \eta_1| - (\xi - \eta_1 - \eta_2) \log|\xi - \eta_1 - \eta_2|\Big\},\\
\partial_{\eta_2} [\Tb_1(\eta_1, \eta_2, \xi - \eta_1 - \eta_2)] &= -2 \Big\{\eta_2 \log|\eta_2| - (\eta_1 + \eta_2) \log|\eta_1 + \eta_2|\\
&\qquad + (\xi - \eta_2) \log|\xi - \eta_2| - (\xi - \eta_1 - \eta_2) \log|\xi - \eta_1 - \eta_2|\Big\}.
\end{split}
\end{align}

\subsection{Modified scattering}

Nonlinearity leads to a cumulative frequency shift in the long-time behavior of the Fourier components of the solution due to space-time resonances of the form $\xi+\xi -\xi = \xi$. To account for this
effect, we use the method of modified scattering and introduce a phase correction
\[
\Theta(\xi, t)=-2t\xi\log|\xi|+\xi \int_0^t [\beta_1(t)\Tb_1(\xi, \xi, -\xi)+\beta_2(t)\Tb_1(\xi,-\xi,\xi)+\beta_3(t)\Tb_1(-\xi, \xi, \xi)]|\hat \vp(\xi, \tau)|^2 \diff{\tau},
\]
where $\beta_1(t), \beta_2(t)$, and $\beta_3(t)$ are real-valued functions of $t$ to be determined later. We then let
\[
\hat v(\xi, t)=e^{i\Theta(\xi, t)}\hat \vp(\xi, t).
\]

Using \eqref{phihateq}, we find that
\begin{align}
\begin{split}
\hat v_t(\xi, t) &= e^{i \Theta(\xi, t)} [\hat\vp_t(\xi, t) + i \Theta_t(\xi, t) \hat\vp(\xi, t)]
\\
&= U_1(\xi, t) + U_2(\xi, t) - e^{i \Theta(\xi, t)} \widehat{\Nc_{\geq 5}(\vp)}(\xi, t),
\end{split}
\label{vteq}
\end{align}
where
\begin{align*}
U_1(\xi, t) &=e^{i\Theta(\xi, t)}\bigg\{ -\frac16 i\xi \iint_{\R^2}  \Tb_1(\eta_1,\eta_2, \xi - \eta_1 - \eta_2) \hat\vp(\xi-\eta_1-\eta_2, t)\hat\vp(\eta_1, t)\hat\vp(\eta_2, t) \diff{\eta_1} \diff{\eta_2}\\
&\qquad +i\xi \left[\beta_1(t)\Tb_1(\xi, \xi, -\xi)+\beta_2(t)\Tb_1(\xi, -\xi, \xi)+\beta_3(t)\Tb_1(-\xi,\xi,\xi)\right]|\hat \vp(\xi, t)|^2\hat \vp(\xi, t) \bigg\},\\
U_2(\xi, t) &= \hat v(\xi,t)\bigg\{  i\xi \int_0^t \left[\beta_1'(t)\Tb_1(\xi, \xi, -\xi)+\beta'_2(t)\Tb_1(\xi, -\xi, \xi)+\beta'_3(t)\Tb_1(-\xi,\xi,\xi)\right]|\hat \vp(\xi, \tau)|^2 \diff{\tau}\bigg\}.
\end{align*}
The coefficient of $\hat v$ in the term $U_2$ is purely imaginary, so it leads to a phase shift in $\hat{v}$ that does not affect its norm, and we get from \eqref{vteq} that
\begin{align*}
\|\vp\|_{Z}&=\|(|\xi|+|\xi|^{r+4}) \hat\vp(\xi, t)\|_{L^\infty_\xi}=\|(|\xi|+|\xi|^{r+4}) \hat v(\xi, t)\|_{L^\infty_\xi}
\\
&\lesssim \int_0^t \|(|\xi|+|\xi|^{r+4})U_1(\xi, \tau)\|_{L^\infty_\xi}+\|(|\xi|+|\xi|^{r+4})\widehat{\mathcal{N}_{\geq5}(\vp)}(\xi, \tau)\|_{L^\infty_\xi} \diff \tau.
\end{align*}
We will estimate the cubic terms involving $U_1$ in Sections \ref{sec:high}--\ref{sec:res} and the higher-degree terms involving
$\widehat{\Nc_{\geq5}(\vp)}$ in Section \ref{higherorder}. We do not need to consider the terms in $U_1$ that involve
the $\beta_j$ until we come to an analysis of the space-time resonances in Section~\ref{sec:res}.

To begin with, we recall that $h =e^{-2t\partial_x\log|\partial_x|}\vp$ is defined in \eqref{Lambda-h}.
 From \eqref{phihateq}, we find that $\hat h$ satisfies
\begin{align}\label{eqhhat}
\begin{split}
&\hat h_t(\xi, t) +\frac16 i\xi\iint_{\R^2} \Tb_1(\eta_1, \eta_2, \xi - \eta_1 - \eta_2) e^{it\Phi(\xi,\eta_1,\eta_2)} \hat h(\xi-\eta_1-\eta_2, t) \hat h(\eta_1, t)\hat h(\eta_2, t)  \diff{\eta_1} \diff{\eta_2}
\\
&\hskip2in+e^{-2it\xi\log|\xi|}\widehat{\Nc_{\geq5}(\vp)}(\xi, t)=0,
\end{split}
\end{align}
where
\begin{equation}
\Phi(\xi,\eta_1,\eta_2)=2(\xi-\eta_1-\eta_2)\log|\xi-\eta_1-\eta_2|+2\eta_1\log|\eta_1|+2\eta_2\log|\eta_2|-2\xi\log|\xi|.
\label{defPhi}
\end{equation}
Suppressing the dependence on the time variable $t$, we can write the integral in $U_1$ involving $\vp$ in terms of $h$ as
\begin{align*}
&\iint_{\R^2}  \Tb_1(\eta_1,\eta_2, \xi - \eta_1 - \eta_2) \hat\vp(\xi-\eta_1-\eta_2)\hat\vp(\eta_1)\hat\vp(\eta_2) \diff{\eta_1} \diff{\eta_2}\\
&\qquad =\iint_{\R^2} \Tb_1(\eta_1, \eta_2, \xi - \eta_1 - \eta_2) e^{it\Phi(\xi,\eta_1,\eta_2)} \hat h(\xi-\eta_1-\eta_2) \hat h(\eta_1)\hat h(\eta_2)  \diff{\eta_1} \diff{\eta_2}.
\end{align*}

Carrying out a dyadic decomposition, with $ h_j=P_j h$ and $ \vp_j=P_j\vp$ where $P_j$ is the Fourier multiplier with symbol $\psi_j$ defined in \eqref{defpsik}, we rewrite this integral in each dyadic block as
\begin{align}
\label{cubdyadic}
 \iint_{\R^2}   \Tb_1(\eta_1, \eta_2, \xi - \eta_1 - \eta_2) e^{it\Phi(\xi,\eta_1,\eta_2)} \hat h_{j_1}(\eta_1)\hat h_{j_2}(\eta_2) \hat h_{j_3}(\xi-\eta_1-\eta_2) \diff{\eta_1} \diff{\eta_2}.
\end{align}

In the following subsections, we estimate this integral in various regions of frequency-space.

In Section~\ref{sec:high}, we estimate the integral for high frequencies (large $j_1$, $j_2$, and $j_3$). In Section~\ref{sec:nonres}, we estimate the integral for nonresonant frequencies, using oscillatory integral estimates with respect to the frequency variables together with multilinear estimates to get sufficient time decay.

In Section~\ref{sec:near}, we consider frequencies that are close to the resonant frequencies. In that case, the bounds for the multilinear symbols are worse, so we cannot obtain sufficient time decay by the method used for the nonresonant frequencies. We resolve this issue by an additional dyadic decomposition centered at each resonant point and a refinement of the symbol estimates.

Finally, in Section~\ref{sec:res}, we consider frequencies that are at the space resonance or space-time resonances. For the space resonance, we estimate the integral in a region about the space resonance point that shrinks in time, using an oscillatory integral estimate with respect to time and the equation to eliminate the time-derivative of the solution. For the space-time resonances, we take advantage of the modified scattering phase correction and estimate the integral on shrinking regions about the space-time resonance points.

\subsection{High frequencies}\label{sec:high}
When $\max\{j_1,j_2,j_3\} \gtrsim 10^{-3}\log_2 |t+1|>0$, we can estimate the nonlinear terms \eqref{cubdyadic} by using Lemma \ref{multilinear},  with the $L^\infty$-norm placed on the lowest derivative term. There are, in total, $r+6=13$ derivatives shared by three factors of $\vp$. Thus, we can ensure that the term with least derivatives has at most four derivatives, with or without a logarithmic derivative.

To be more specific, using H\"{o}lder's inequality, Sobolev embedding, and the bootstrap assumptions, we obtain the estimate
\begin{align*}
 & \Bigg\| \xi (|\xi|+|\xi|^{r + 4})\cutoffxi(\xi, t)  \iint_{\R^2}   \Tb_1(\eta_1, \eta_2, \xi - \eta_1 - \eta_2) e^{it\Phi(\xi,\eta_1,\eta_2)} \hat h_{j_1}(\eta_1)\hat h_{j_2}(\eta_2) \hat h_{j_3}(\xi-\eta_1-\eta_2)  \diff{\eta_1} \diff{\eta_2} \Bigg\|_{L^\infty_\xi}\\
 \lesssim ~~& (t + 1)^{(r+8-s)10^{-3}}\|\vp_{\min}\|_{L^2}(\|\vp_{\med}\|_{L^{\infty}}+\|L\partial_x\vp_{\med}\|_{W^{r,\infty}})\|\vp_{\max}\|_{H^s}\\
\lesssim ~~& (t + 1)^{(r+8-s)10^{-3}} \|\vp_{j_1}\|_{H^s}\|\vp_{j_2}\|_{H^s}\|\vp_{j_3}\|_{H^s},
\end{align*}
where $\max$, $\med$, $\min$ represent the maximum, median, and the minimum of  $j_1$, $j_2$, $j_3$, and $\cutoffxi$ is the cutoff function for the frequency range \eqref{xirange}. From \eqref{param_vals}, we have $(r+8-s)10^{-3}<-1.1$, so the right-hand-side is summable over $j_1$, $j_2$, $j_3$, and the sum is integrable for $t\in (0,\infty)$.

\subsection{Nonresonant frequencies}\label{sec:nonres}
We now only need to consider when $ \max\{j_1,j_2,j_3\}<10^{-3}\log_2(t+1)$.
The regions $|j_1-j_3|>1$ or $|j_2-j_3|>1$ correspond to nonresonant frequencies. Without loss of generality, we assume $|j_1-j_3|>1$.

Notice that by \eqref{defPhi}, we have
\begin{align}\label{denom1}
\partial_{\eta_1}\Phi = 2\log|\eta_1|-2\log|\xi-\eta_1-\eta_2|.
\end{align}
Since $|\eta_1|$ and $|\xi-\eta_1-\eta_2|$ are in different dyadic blocks, we have $\big||\eta_1|-|\xi-\eta_1-\eta_2|\big|\gtrsim \max\{|\eta_1|, |\xi-\eta_1-\eta_2|\}$.
Therefore, $|\partial_{\eta_1}\Phi| \gtrsim 1$.

After integrating by parts, we have
\begin{align*}
&  \iint_{\R^2}    \Tb_1(\eta_1, \eta_2, \xi - \eta_1 - \eta_2) e^{it\Phi(\xi,\eta_1,\eta_2)} \hat h_{j_1}(\eta_1)\hat h_{j_2}(\eta_2) \hat h_{j_3}(\xi-\eta_1-\eta_2) \diff{\eta_1} \diff{\eta_2}\\
 =~&   \iint_{\R^2}    \frac{\Tb_1(\eta_1, \eta_2, \xi - \eta_1 - \eta_2)}{it\partial_{\eta_1}\Phi(\xi,\eta_1,\eta_2)}\partial_{\eta_1} e^{it\Phi(\xi,\eta_1,\eta_2)} \hat h_{j_1}(\eta_1)\hat h_{j_2}(\eta_2) \hat h_{j_3}(\xi-\eta_1-\eta_2) \diff{\eta_1} \diff{\eta_2}\\
 =~& -W_1-W_2-W_3,
\end{align*}
where
\[
W_1(\xi, t)=  \iint_{\R^2} \partial_{\eta_1}\left[ \frac{\Tb_1(\eta_1, \eta_2, \xi - \eta_1 - \eta_2)}{it\partial_{\eta_1}\Phi(\xi,\eta_1,\eta_2)}\right] e^{it\Phi(\xi,\eta_1,\eta_2)} \hat h_{j_1}(\eta_1)\hat h_{j_2}(\eta_2) \hat h_{j_3}(\xi-\eta_1-\eta_2) \diff{\eta_1} \diff{\eta_2},
\]
\[
W_2(\xi, t)=  \iint_{\R^2} \left[ \frac{\Tb_1(\eta_1, \eta_2, \xi - \eta_1 - \eta_2)}{it\partial_{\eta_1}\Phi(\xi,\eta_1,\eta_2)}\right] e^{it\Phi(\xi,\eta_1,\eta_2)} \hat h_{j_1}(\eta_1)\hat h_{j_2}(\eta_2) \partial_{\eta_1}\hat h_{j_3}(\xi-\eta_1-\eta_2) \diff{\eta_1} \diff{\eta_2},
\]\[
W_3(\xi, t)=  \iint_{\R^2} \left[ \frac{\Tb_1(\eta_1, \eta_2, \xi - \eta_1 - \eta_2)}{it\partial_{\eta_1}\Phi(\xi,\eta_1,\eta_2)}\right] e^{it\Phi(\xi,\eta_1,\eta_2)} \partial_{\eta_1} \hat h_{j_1}(\eta_1)\hat h_{j_2}(\eta_2) \hat h_{j_3}(\xi-\eta_1-\eta_2) \diff{\eta_1} \diff{\eta_2}.
\]

{\bf Estimate of $W_1$.}
Since
\begin{align}
\label{F-1W1}
\|W_1\|_{L^\infty_\xi}\lesssim \|\F^{-1}({W}_1)\|_{L^1},
\end{align}
it suffices to estimate the $L^1_x$ norm of
\[
\iiint_{\R^3} e^{i\xi x}   \partial_{\eta_1}\left[ \frac{\Tb_1(\eta_1, \eta_2, \xi - \eta_1 - \eta_2)}{it\partial_{\eta_1}\Phi(\xi,\eta_1,\eta_2)}\right] e^{it\Phi(\xi,\eta_1,\eta_2)} \hat h_{j_1}(\eta_1)\hat h_{j_2}(\eta_2) \hat h_{j_3}(\xi-\eta_1-\eta_2) \diff{\eta_1} \diff{\eta_2}\diff \xi.
\]

Notice that by \eqref{denom1}
\begin{align*}
\partial_{\eta_1}\frac{\Tb_1(\eta_1, \eta_2, \xi - \eta_1 - \eta_2)}{\partial_{\eta_1}\Phi(\xi, \eta_1, \eta_2)}
= \kappa_1(\eta_1,\eta_2,\xi - \eta_1 - \eta_2) - \frac{\kappa_2(\eta_1,\eta_2,\xi - \eta_1 - \eta_2)}{2},
\end{align*}
where
\begin{align*}
\kappa_1(\eta_1,\eta_2,\eta_3)&=\frac{\partial_{\eta_1}\Tb_1(\eta_1,\eta_2,\eta_3)-\partial_{\eta_3}\Tb_1(\eta_1,\eta_2,\eta_3)}{\log|\eta_1|-\log|\eta_3|},\\
\kappa_2(\eta_1,\eta_2,\eta_3)&=\Tb_1(\eta_1,\eta_2,\eta_3)  \frac{\frac1{\eta_1}+\frac1{\eta_3}}{(\log|\eta_1|-\log|\eta_3|)^2}.
\end{align*}

Making a change of variable $\eta_3=\xi-\eta_1-\eta_2$, we need to estimate the trilinear form
\begin{align*}
\frac1{it}\iiint_{\R^3}e^{i\xi x}   \left[\kappa_1(\eta_1,\eta_2,\eta_3)+\kappa_2(\eta_1,\eta_2,\eta_3)\right] \hat \vp_{j_1}(\eta_1)\hat \vp_{j_2}(\eta_2) \hat \vp_{j_3}(\eta_3) \diff{\eta_1} \diff{\eta_2}\diff\eta_3,
\end{align*}
with symbol
\[
   \left[\kappa_1(\eta_1,\eta_2,\eta_3)+\kappa_2(\eta_1,\eta_2,\eta_3)\right]\psi_{j_1}(\eta_1)\psi_{j_2}(\eta_2)\psi_{j_3}(\eta_3).
\]

According to Lemma~\ref{multilinear}, this trilinear operator is bounded on $L^2\times L^2 \times L^\infty \to L^1$ by
\begin{align}
\label{kappaest}
\begin{split}
&\left\|\left[\kappa_1(\eta_1,\eta_2,\eta_3)+\kappa_2(\eta_1,\eta_2,\eta_3)\right]\psi_{j_1}(\eta_1)\psi_{j_2}(\eta_2)\psi_{j_3}(\eta_3)\right\|_{S^\infty}\\
\lesssim~&\Big(\|  \partial_{\eta_1}\Tb_1(\eta_1,\eta_2,\eta_3)\tilde\psi_{j_1}(\eta_1)\tilde\psi_{j_2}(\eta_2)\tilde\psi_{j_3}(\eta_3)\|_{S^\infty}\\
&\qquad +\|\partial_{\eta_3}\Tb_1(\eta_1,\eta_2,\eta_3)\tilde\psi_{j_1}(\eta_1)\tilde\psi_{j_2}(\eta_2)\tilde\psi_{j_3}(\eta_3)\|_{S^\infty}\Big) \cdot\left\|\frac{\psi_{j_1}(\eta_1)\psi_{j_2}(\eta_2)\psi_{j_3}(\eta_3)}{\log|\eta_1|-\log|\eta_3|}\right\|_{S^\infty}\\
&+\bigg(\left\|\frac{\Tb_1(\eta_1,\eta_2,\eta_3)}{\eta_1}\tilde\psi_{j_1}(\eta_1)\tilde\psi_{j_2}(\eta_2)\tilde\psi_{j_3}(\eta_3)\right\|_{S^\infty}\\
& \qquad +\left\|  \frac{\Tb_1(\eta_1,\eta_2,\eta_3)}{\eta_3}\tilde\psi_{j_1}(\eta_1)\tilde\psi_{j_2}(\eta_2)\tilde\psi_{j_3}(\eta_3)\right\|_{S^\infty}\bigg)\cdot\left\|\frac{\psi_{j_1}(\eta_1)\psi_{j_2}(\eta_2)\psi_{j_3}(\eta_3)}{(\log|\eta_1|-\log|\eta_3|)^2}\right\|_{S^\infty}.
\end{split}
\end{align}
\begin{lemma}\label{log-log}
Suppose that $|j_1-j_3|>1$. Then for any $m \in \Z_+$,
\[
\left\|\frac{1}{(\log|\eta_1|-\log|\eta_3|)^m}\psi_{j_1}(\eta_1)\psi_{j_2}(\eta_2)\psi_{j_3}(\eta_3)\right\|_{S^\infty}\lesssim 1.
\]
\end{lemma}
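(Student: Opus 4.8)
The plan is to exploit the scaling-invariance of the $S^\infty$-norm to reduce the claim to a uniform estimate for a single compactly-supported family of symbols indexed by the integer $\ell=j_1-j_3$. Since $j_1,j_3\in\Z$ and $|j_1-j_3|>1$, we have $|\ell|\ge 2$. Using the identity $\psi_k(\xi)=\psi_0(\xi/2^k)$ recorded after \eqref{defpsik}, the symbol factors through a linear rescaling:
\[
\frac{\psi_{j_1}(\eta_1)\psi_{j_2}(\eta_2)\psi_{j_3}(\eta_3)}{(\log|\eta_1|-\log|\eta_3|)^m}
= G_\ell\Big(\frac{\eta_1}{2^{j_1}},\frac{\eta_2}{2^{j_2}},\frac{\eta_3}{2^{j_3}}\Big),
\qquad
G_\ell(\mu)=\frac{\psi_0(\mu_1)\,\psi_0(\mu_2)\,\psi_0(\mu_3)}{\big(\ell\log2+\log|\mu_1|-\log|\mu_3|\big)^m}.
\]
Because $\|\kappa(A\,\cdot)\|_{S^\infty}=\|\kappa\|_{S^\infty}$ for every invertible linear map $A$ (as $\F^{-1}[\kappa(A\,\cdot)]=|\det A|^{-1}(\F^{-1}\kappa)(A^{-\top}\,\cdot)$), it then suffices to show $\sup_{|\ell|\ge 2}\|G_\ell\|_{S^\infty}\lesssim 1$.

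The crucial point — and the only place the separation hypothesis $|j_1-j_3|>1$ enters — is the non-degeneracy of the phase denominator. On $\operatorname{supp}\psi_0$ one has $5/8\le|\mu|\le 8/5$, so on $\operatorname{supp}(\psi_0(\mu_1)\psi_0(\mu_3))$,
\[
\big|\log|\mu_1|-\log|\mu_3|\big|\le \log(64/25)<2\log2\le|\ell|\log2,
\]
and hence $\big|\ell\log2+\log|\mu_1|-\log|\mu_3|\big|\ge|\ell|\log2-\log(64/25)\ge\log(25/16)=:c_0>0$, uniformly in $\ell$. Since $G_\ell$ is supported in the fixed compact annular set $\{5/8\le|\mu_i|\le 8/5\}$, which avoids the coordinate hyperplanes, the functions $\mu\mapsto\log|\mu_i|$ are smooth there with all derivatives bounded by absolute constants; combining this with the uniform lower bound $\big|\ell\log2+\log|\mu_1|-\log|\mu_3|\big|\ge c_0$ and the chain rule, I would conclude $|\partial^\alpha G_\ell(\mu)|\le C_{|\alpha|}$ for every multi-index $\alpha$, uniformly in $\ell$ (note $|\ell|$ large only improves the bounds).

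Finally, since $G_\ell\in C_c^\infty(\R^3)$ with all derivatives bounded uniformly in $\ell$, repeated integration by parts shows that $\F^{-1}G_\ell$ is a Schwartz function with seminorms bounded uniformly in $\ell$; in particular $\|G_\ell\|_{S^\infty}=\|\F^{-1}G_\ell\|_{L^1(\R^3)}\lesssim 1$, which proves the lemma. The main (and essentially only) obstacle is obtaining uniformity over the infinitely many scales $\ell=j_1-j_3$; once the hypothesis $|\ell|\ge2>\log_2(64/25)$ is used to keep the denominator bounded away from zero, the rest is a routine rescaling-and-integration-by-parts estimate. (Alternatively, one may first peel off the $\eta_2$ variable, using that the $S^\infty$-norm of a tensor product of symbols in disjoint variable groups factors and that $\|\psi_{j_2}\|_{S^\infty}=\|\psi_0\|_{S^\infty}\lesssim1$, and then run the same argument in the two variables $(\eta_1,\eta_3)$.)
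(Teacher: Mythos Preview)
Your proof is correct and follows essentially the same approach as the paper: both rescale via $\psi_k(\xi)=\psi_0(\xi/2^k)$, use the hypothesis $|j_1-j_3|>1$ together with $\operatorname{supp}\psi_0\subset\{5/8\le|\mu|\le 8/5\}$ to bound the denominator away from zero uniformly in the dyadic indices, and then invoke oscillatory-integral/integration-by-parts estimates to control the $L^1$ norm of the inverse Fourier transform. The paper compresses all of this into one line (``the last inequality comes from oscillatory integral estimates''), whereas you have made the scaling invariance of the $S^\infty$-norm, the explicit lower bound $|\ell\log2+\log|\mu_1|-\log|\mu_3||\ge\log(25/16)$, and the resulting uniform Schwartz bounds on $\F^{-1}G_\ell$ fully explicit.
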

\begin{proof}
By the definition of the $S^\infty$-norm \eqref{Sinf} and the definition of $\psi_k$  \eqref{defpsik}, we have that
\begin{align*}
&\left\|\frac{\psi_{j_1}(\eta_1)\psi_{j_2}(\eta_2)\psi_{j_3}(\eta_3)}{(\log|\eta_1|-\log|\eta_3|)^m}\right\|_{S^\infty}\\
=~&\left\|\iiint_{\R^3} \frac{\psi_{j_1}(\eta_1)\psi_{j_2}(\eta_2)\psi_{j_3}(\eta_3)}{(\log|\eta_1|-\log|\eta_3|)^m} e^{i(y_1\eta_1+y_2\eta_2+y_3\eta_3)}\diff \eta_1\diff\eta_2\diff\eta_3\right\|_{L^1}\\
=~&\iiint_{\R^3}\left|\iiint_{\R^3} \frac{\psi_0(2^{-j_1}\eta_1)\psi_0(2^{-j_2}\eta_2)\psi_0(2^{-j_3}\eta_3)}{(\log|\eta_1|-\log|\eta_3|)^m} e^{i(y_1\eta_1+y_2\eta_2+y_3\eta_3)}\diff \eta_1\diff\eta_2\diff\eta_3\right|\diff y_1\diff y_2\diff y_3\\
\lesssim~&1,
\end{align*}
where the last inequality comes from oscillatory integral estimates, using the fact that $|j_1-j_3|>1$
and the support of $\psi_0$ is $(-\frac85, -\frac58)\cup(\frac58,\frac85)$.
\end{proof}

For the estimates of other symbols in \eqref{kappaest}, we have the following lemma.
\begin{lemma} \label{estT1}
For any $j_1, j_2, j_3 \in \Z$, we have
\begin{align}
& \|  \partial_{\eta_1}\Tb_1(\eta_1,\eta_2,\eta_3)\tilde\psi_{j_1}(\eta_1)\tilde\psi_{j_2}(\eta_2)\tilde\psi_{j_3}(\eta_3)\|_{S^\infty}\lesssim 2^{\max\{j_2, j_3\}},\label{est_symb1}\\
& \|  \Tb_1(\eta_1,\eta_2,\eta_3)\tilde\psi_{j_1}(\eta_1)\tilde\psi_{j_2}(\eta_2)\tilde\psi_{j_3}(\eta_3)\|_{S^\infty}\lesssim
2^{\max\{j_1,j_2,j_3\}+\min\{j_1,j_2,j_3\}},\label{est_symb1'}
\end{align}
and
\begin{align}\label{est_symb2}
\left\|  \frac{\Tb_1(\eta_1,\eta_2,\eta_3)}{\eta_1}\tilde\psi_{j_1}(\eta_1)\tilde\psi_{j_2}(\eta_2)\tilde\psi_{j_3}(\eta_3)\right\|_{S^\infty}\lesssim 2^{\max\{j_2,j_3\}}.
\end{align}
Furthermore, since $\Tb_1$ is symmetric, we also have
\begin{align*}
& \|  \partial_{\eta_3}\Tb_1(\eta_1,\eta_2,\eta_3)\tilde\psi_{j_1}(\eta_1)\tilde\psi_{j_2}(\eta_2)\tilde\psi_{j_3}(\eta_3)\|_{S^\infty}\lesssim 2^{\max\{j_1, j_2\}},\\
& \left\|  \frac{\Tb_1(\eta_1,\eta_2,\eta_3)}{\eta_3}\tilde\psi_{j_1}(\eta_1)\tilde\psi_{j_2}(\eta_2)\tilde\psi_{j_3}(\eta_3)\right\|_{S^\infty}\lesssim 2^{\max\{j_1,j_2\}}.
\end{align*}
\end{lemma}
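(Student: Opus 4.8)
The plan is to deduce all five bounds from the contour-integral representation $\Tb_1(\eta_1,\eta_2,\eta_3)=\int_\R |\zeta|^{-3}\prod_{j=1}^3(1-e^{i\eta_j\zeta})\,\diff\zeta$ (the $n=1$ case of \eqref{Tnintdef}), which, unlike the explicit seven-term formula, makes the trilinear cancellation of $\Tb_1$ as any $\eta_j\to 0$ manifest; a term-by-term estimate of the explicit formula loses powers of the largest frequency and is not sharp enough. The first step is the standard reduction that governs every $S^\infty$ estimate here: after the rescaling $\eta_i\mapsto 2^{j_i}\eta_i$, a symbol $m$ supported on the product of annuli $\{|\eta_i|\approx 2^{j_i}\}$ obeys $\|m\,\tilde\psi_{j_1}\tilde\psi_{j_2}\tilde\psi_{j_3}\|_{S^\infty}\lesssim M$ as soon as $|\partial^\alpha_\eta m(\eta)|\lesssim M\prod_i 2^{-j_i\alpha_i}$ on that set for every multi-index $\alpha$ with $|\alpha|\le N$, where $N$ is a fixed integer; this follows exactly as in the proof of Lemma~\ref{log-log}, using $\|\cdot\|_{S^\infty}\lesssim\|\cdot\|_{H^N}$ on the unit scale together with the algebra property in Lemma~\ref{multilinear}(i). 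Thus it suffices to establish these scaling-adapted pointwise bounds for $\Tb_1$, $\partial_{\eta_1}\Tb_1$, and $\Tb_1/\eta_1$, together with their first few $\eta$-derivatives.

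For the pointwise bounds I would differentiate the contour integral under the integral sign (legitimate for the low-order derivatives, where the integral stays absolutely convergent; for the remaining derivatives one may instead read the closed-form expression, e.g.\ $\partial_{\eta_1}^2\Tb_1=-2\log\bigl|\eta_1(\eta_1+\eta_2+\eta_3)/((\eta_1+\eta_2)(\eta_1+\eta_3))\bigr|$, off the explicit formula and exploit the null relations $\sum\pm\ell^2=\sum\pm\ell=0$, summed over the seven linear forms $\ell$ with their signs, to renormalize the logarithms), using $|1-e^{i\eta_j\zeta}|\le\min\{2,|\eta_j||\zeta|\}$ and, for \eqref{est_symb2}, the identity $\bigl|\tfrac{1-e^{i\eta_j\zeta}}{\eta_j}\bigr|=\bigl|\zeta\int_0^1 e^{is\eta_j\zeta}\,\diff s\bigr|\le|\zeta|$. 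Splitting the $\zeta$-integral dyadically at the reciprocals $1/|\eta_j|$ of the frequencies, and on the ranges $|\zeta|\gtrsim 1/|\eta_j|$ where the surviving phase $e^{i\eta_j\zeta}$ oscillates integrating by parts in $\zeta$ (which trades each $\partial_{\eta_1}$ for a gain of $1/\eta_1$, explaining why \eqref{est_symb1} does not involve $j_1$: once $\partial_{\eta_1}$ is applied, $\Tb_1$ depends on $\eta_1$ only through a phase), one gets on every dyadic piece a bound that is a product of frequency factors times an at-most-logarithmic-in-$j$ factor coming from the length of that $\zeta$-range. Summing over the pieces, the logarithmic-in-$j$ factors are absorbed by the exponential gains $2^{-|j_i-j_k|}$ separating distinct frequency scales, which yields $|\Tb_1|\lesssim 2^{\max+\min}$, $|\partial_{\eta_1}\Tb_1|\lesssim 2^{\max\{j_2,j_3\}}$, $|\Tb_1/\eta_1|\lesssim 2^{\max\{j_2,j_3\}}$ and the corresponding scaling-adapted derivative bounds; feeding these into the reduction of the first step gives \eqref{est_symb1}, \eqref{est_symb1'}, and \eqref{est_symb2}. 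The two remaining statements follow immediately by exchanging $\eta_1$ and $\eta_3$, since $\Tb_1$ is symmetric in its three arguments.

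The main obstacle is the bookkeeping in the second step: the sharp bounds genuinely require the trilinear cancellation, so a crude estimate fails — for instance, when one frequency dominates, $\partial_{\eta_1}^2\Tb_1$ is $O(1)$ by inspection but is actually of size $|\eta_2\eta_3|/|\eta_1|^2$ once the cancellation is used, and only the latter is compatible with the scaling demanded by the first step. Keeping track of which of $j_1,j_2,j_3$ is largest, choosing the $\zeta$-decomposition and the direction of integration by parts in $\zeta$ accordingly, and deciding on each piece whether to use the elementary bound on $1-e^{i\eta_j\zeta}$ or to integrate by parts, is the case-heavy heart of the argument; the rest is routine.
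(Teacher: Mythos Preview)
Your plan and the paper's proof share the crucial insight — work from the integral representation $\Tb_1=\int_\R|\zeta|^{-3}\prod_j(1-e^{i\eta_j\zeta})\,\diff\zeta$, not the seven-term log formula — but the \emph{reduction} you propose in step~1 is different from what the paper does, and it runs into a real obstacle. Your reduction asks for scaling-adapted pointwise bounds $|\partial_\eta^\alpha m|\lesssim M\prod_i 2^{-j_i\alpha_i}$ on the product annulus. For $m=\partial_{\eta_1}\Tb_1$ this already fails at $|\alpha|=1$: one has $\partial_{\eta_1}^2\Tb_1=-2\log\bigl|\eta_1(\eta_1+\eta_2+\eta_3)/((\eta_1+\eta_2)(\eta_1+\eta_3))\bigr|$, which blows up along the hyperplanes $\eta_1+\eta_2=0$, $\eta_1+\eta_3=0$, $\eta_1+\eta_2+\eta_3=0$, and these lie inside the support of $\tilde\psi_{j_1}\tilde\psi_{j_2}\tilde\psi_{j_3}$ whenever two of the $j_i$ are comparable. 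The next derivative is $\sim 1/(\eta_1+\eta_2)$, which is not even locally $L^2$ across that hyperplane, so retreating from $C^N$ to $H^N$ with $N>3/2$ does not save the argument either (the localized symbol is in $H^s$ only for $s<3/2$). The ``null relations'' you invoke control the large-$|\eta|$ behaviour of the log sum but do nothing at the zeros of the individual linear forms $\eta_i+\eta_j$.

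The paper sidesteps this entirely by computing $\|\F^{-1}m\|_{L^1}$ \emph{directly}, without any symbol-class criterion. It inserts the $\zeta$-integral for $\Tb_1$ into the inverse Fourier transform and interchanges the order of integration, so that each factor $(1-e^{i\eta_j\zeta})\tilde\psi_{j_i}(\eta_j)$ produces $\F^{-1}\tilde\psi_{j_i}(y_i)-\F^{-1}\tilde\psi_{j_i}(y_i+\zeta)$ (and, for \eqref{est_symb1}, the factor $-i\zeta e^{i\eta_1\zeta}\tilde\psi_{j_1}$ produces $-i\zeta\,\F^{-1}\tilde\psi_{j_1}(y_1+\zeta)$). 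The $L^1_y$-norm of each such difference is $\lesssim\min\{1,2^{j_i}|\zeta|\}$, and the remaining $\zeta$-integral $\int|\zeta|^{-3}\prod_i\min\{1,2^{j_i}|\zeta|\}\,\diff\zeta$ (with one factor replaced by $|\zeta|$ in the $\partial_{\eta_1}$ case) is evaluated by splitting at the three scales $2^{-j_i}$; the only logarithm that appears is $|j_{\max}-j_{\med}|$, harmlessly absorbed into $2^{j_{\max}-j_{\med}}$. For \eqref{est_symb2} the paper does not use your identity $\frac{1-e^{i\eta_1\zeta}}{\eta_1}=\zeta\int_0^1 e^{is\eta_1\zeta}\,\diff s$ but instead factors $\frac{\Tb_1}{\eta_1}\,\tilde\psi=\bigl(\Tb_1\,\tilde\psi\bigr)\cdot\bigl(\eta_1^{-1}\,\tilde{\tilde\psi}\bigr)$ with slightly wider cutoffs and invokes the $S^\infty$ algebra property of Lemma~\ref{multilinear}(i) together with \eqref{est_symb1'}. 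Your identity would also work here, but the paper's route is shorter.
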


\begin{proof} {\bf 1.} We prove \eqref{est_symb1} first. Using inverse Fourier transform in $(\eta_1, \eta_2, \eta_3)$, we obtain
\begin{align*}
&\F^{-1}[  \partial_{\eta_1}\Tb_1(\eta_1,\eta_2,\eta_3)\tilde\psi_{j_1}(\eta_1)\tilde\psi_{j_2}(\eta_2) \tilde\psi_{j_3}(\eta_3)]\\
=~&\iiint_{\R^3}e^{i(y_1\eta_1+y_2\eta_2+y_3\eta_3)}   \partial_{\eta_1}\left[\int_{\R}\frac{\prod_{j=1}^3(1-e^{i\eta_j\zeta})}{|\zeta|^{3}}\diff \zeta\right] \tilde\psi_{j_1}(\eta_1) \tilde\psi_{j_2}(\eta_2) \tilde\psi_{j_3}(\eta_3) \diff \eta_1\diff \eta_2\diff \eta_3\\
=~&\iiint_{\R^3}  \left[\int_{\R}\frac{-i\zeta e^{i\eta_1(\zeta+y_1)}(e^{iy_2\eta_2}-e^{i\eta_2(\zeta+y_2)})(e^{iy_3\eta_3}-e^{i\eta_3(\zeta+y_3)})}{|\zeta|^{3}}\diff \zeta\right] \tilde\psi_{j_1}(\eta_1) \tilde\psi_{j_2}(\eta_2) \tilde\psi_{j_3}(\eta_3) \diff \eta_1\diff \eta_2\diff \eta_3\\
=~& \int_{\R}\frac{-i\zeta }{|\zeta|^{3}} \cdot\left[\F^{-1}[\tilde\psi_{j_1}](y_1+\zeta)\right]\cdot \left[\F^{-1}[\tilde\psi_{j_2}](y_2)-\F^{-1}[\tilde\psi_{j_2}](\zeta+y_2)\right] \left[\F^{-1}[\tilde\psi_{j_3}](y_3)-\F^{-1}[\tilde\psi_{j_3}](\zeta+y_3)\right] \diff \zeta.
\end{align*}

Notice that
\begin{align*}
\left|\F^{-1}[\tilde\psi_{j_1}](y_1+\zeta)\right| &= 2^{j_1}\left|\F^{-1}[\tilde\psi_0](2^{j_1}(y_1+\zeta))\right|,\\
\left|\F^{-1}[\tilde\psi_{j_2}](y_2)-\F^{-1}[\tilde\psi_{j_2}](\zeta+y_2)\right| &= 2^{j_2} \left|\F^{-1}[\tilde\psi_0](2^{j_2} y_2)-\F^{-1}[\tilde\psi_0](2^{j_2}(\zeta+y_2))\right|,\\
\left|\F^{-1}[\tilde\psi_{j_3}](y_3)-\F^{-1}[\tilde\psi_{j_3}](\zeta+y_3)\right| &= 2^{j_3} \left|\F^{-1}[\tilde\psi_0](2^{j_3} y_3)-\F^{-1}[\tilde\psi_0](2^{j_3}(\zeta+y_3))\right|,
\end{align*}
and that
\begin{align*}
& \int_{\R} \left|\F^{-1}[\tilde\psi_0](2^{j_1}(y_1+\zeta))\right|\diff y_1\lesssim 2^{-j_1},\\
& \int_{\R} \left|\F^{-1}[\tilde\psi_{j_2}](2^{j_2} y_2)-\F^{-1}[\tilde\psi_{j_2}](2^{j_2}(\zeta+y_2))\right| \diff y_2 \lesssim \min\{2^{-j_2}, |\zeta|\},\\
& \int_{\R} \left|\F^{-1}[\tilde\psi_{j_3}](2^{j_3} y_3)-\F^{-1}[\tilde\psi_{j_3}](2^{j_3}(\zeta+y_3))\right|\diff y_3 \lesssim \min\{2^{-j_3}, |\zeta|\}.
\end{align*}
Therefore, we have
\begin{align*}
&\left\|\F^{-1}[  \partial_{\eta_1}\Tb_1(\eta_1,\eta_2,\eta_3)\tilde\psi_{j_1}(\eta_1)\tilde\psi_{j_2}(\eta_2)\tilde\psi_{j_3}(\eta_3)]\right\|_{L^1}\\
\lesssim~&\int_{\R}\frac{1}{|\zeta|^2} 2^{j_2+j_3}\min\{2^{-j_2}, |\zeta|\} \min\{2^{-j_3}, |\zeta|\} \diff\zeta\\
=~&2^{j_2+j_3}\bigg(\int_{|\zeta|>\max\{2^{-j_2}, 2^{-j_3}\}}\frac1{|\zeta|^2}2^{-j_2-j_3}\diff\zeta+\int_{\min\{2^{-j_2}, 2^{-j_3}\}<|\zeta|<\max\{2^{-j_2}, 2^{-j_3}\}}\frac1{|\zeta|}\min\{2^{-j_2}, 2^{-j_3}\}\diff\zeta\\
&\quad+\int_{|\zeta|<\min\{2^{-j_2}, 2^{-j_3}\}} 1\diff\zeta \bigg)\\
\lesssim~& 2^{\max\{j_2, j_3\}}.
\end{align*}

{\bf 2.} Next, we prove \eqref{est_symb1'} and \eqref{est_symb2}. The estimate of \eqref{est_symb1'} is similarly to \eqref{est_symb1}. We first use inverse Fourier transform and write
\begin{align*}
&\F^{-1}\left[  \Tb_1(\eta_1,\eta_2,\eta_3)\tilde\psi_{j_1}(\eta_1)\tilde\psi_{j_2}(\eta_2)\tilde\psi_{j_3}(\eta_3)\right]\\
=~&\iiint_{\R^3}e^{i(y_1\eta_1+y_2\eta_2+y_3\eta_3)}   \left[\int_{\R}\frac{\prod_{j=1}^3(1-e^{i\eta_j\zeta})}{|\zeta|^{3}}\diff \zeta\right]\tilde\psi_{j_1}(\eta_1)\tilde\psi_{j_2}(\eta_2)\tilde\psi_{j_3}(\eta_3) \diff \eta_1\diff \eta_2\diff \eta_3\\
=~&\iiint_{\R^3}  \left[\int_{\R}\frac{ (e^{iy_1\eta_1}-e^{i\eta_1(\zeta+y_1)})(e^{iy_2\eta_2}-e^{i\eta_2(\zeta+y_2)})(e^{iy_3\eta_3}-e^{i\eta_3(\zeta+y_3)})}{ |\zeta|^{3}}\diff \zeta\right]\tilde\psi_{j_1}(\eta_1)\tilde\psi_{j_2}(\eta_2) \tilde\psi_{j_3}(\eta_3) \diff \eta_1\diff \eta_2\diff \eta_3\\
=~& \int_{\R}\frac{1 }{|\zeta|^{3}} \left[\F^{-1}[\tilde\psi_{j_1}](y_1)-\F^{-1}[\tilde\psi_{j_1}](\zeta+y_1)\right]\\
& \qquad \cdot \left[\F^{-1}[\tilde\psi_{j_2}](y_2)-\F^{-1}[\tilde\psi_{j_2}](\zeta+y_2)\right] \cdot \left[\F^{-1}[\tilde\psi_{j_3}](y_3)-\F^{-1}[\tilde\psi_{j_3}](\zeta+y_3)\right] \diff \zeta.
\end{align*}
Taking the $L^1$-norm, we obtain
\begin{align*}
&\left\|\F^{-1}[\Tb_1(\eta_1,\eta_2,\eta_3)\tilde\psi_{j_1}(\eta_1)\tilde\psi_{j_2}(\eta_2)\tilde\psi_{j_3}(\eta_3)]\right\|_{L^1}\\
\lesssim~&\int_{\R} 2^{j_1+j_2+j_3}\frac1{|\zeta|^3}\min\{2^{-j_1},|\zeta|\}\min\{2^{-j_2},|\zeta|\}\min\{2^{-j_3},|\zeta|\}\diff\zeta\\
\lesssim~& \int_{|\zeta|>\max\{2^{-j_1},2^{-j_2},2^{-j_3}\}}\frac1{|\zeta|^3}\diff\zeta+\int_{|\zeta|<\min\{2^{-j_1},2^{-j_2},2^{-j_3}\}}2^{j_1+j_2+j_3}\diff\zeta\\
&+\int_{\min\{2^{-j_1},2^{-j_2},2^{-j_3}\}<|\zeta|<\med\{2^{-j_1},2^{-j_2},2^{-j_3}\}}2^{\med\{j_1,j_2,j_3\}+\min\{j_1,j_2,j_3\}}\frac1{|\zeta|}\diff \zeta\\
&+\int_{\med\{2^{-j_1},2^{-j_2},2^{-j_3}\}<|\zeta|<\max\{2^{-j_1},2^{-j_2},2^{-j_3}\}}2^{\min\{j_1,j_2,j_3\}}\frac1{|\zeta|^2}\diff \zeta\\
\lesssim~&2^{2\min\{j_1,j_2,j_3\}}+2^{\max\{j_1,j_2,j_3\}+\med\{j_1,j_2,j_3\}}+2^{\max\{j_1,j_2,j_3\}+\min\{j_1,j_2,j_3\}}+2^{\min\{j_1,j_2,j_3\}+\med\{j_1,j_2,j_3\}}\\
\lesssim~&2^{\max\{j_1,j_2,j_3\}+\min\{j_1,j_2,j_3\}},
\end{align*}
which proves \eqref{est_symb1'}.

As for \eqref{est_symb2}, we define
\[
\tilde{\tilde\psi}_k(\eta):=\sum\limits_{j=k-3}^{k+3}\psi_j(\eta).
\]
Then it follows from the support of $\psi_k$ and the fact that $\psi_k$ forms a partition of unity that
\[
 \frac{\Tb_1(\eta_1,\eta_2,\eta_3)}{\eta_1}\tilde\psi_{j_1}(\eta_1)\tilde\psi_{j_2}(\eta_2)\tilde\psi_{j_3}(\eta_3)= [\Tb_1(\eta_1,\eta_2,\eta_3)\tilde\psi_{j_1}(\eta_1)\tilde\psi_{j_2}(\eta_2)\tilde\psi_{j_3}(\eta_3)]\cdot\left[ \frac{1}{\eta_1}\tilde{\tilde\psi}_{j_1}(\eta_1)\tilde{\tilde\psi}_{j_2}(\eta_2)\tilde{\tilde\psi}_{j_3}(\eta_3)\right].
\]
By Lemma~\ref{multilinear}, we have
\begin{align}\label{lm81eq1}
\left\|\frac{\Tb_1(\eta_1,\eta_2,\eta_3)}{\eta_1}\tilde\psi_{j_1}(\eta_1)\tilde\psi_{j_2}(\eta_2)\tilde\psi_{j_3}(\eta_3)\right\|_{S^\infty}\lesssim \left\|\Tb_1(\eta_1,\eta_2,\eta_3)\tilde\psi_{j_1}(\eta_1)\tilde\psi_{j_2}(\eta_2)\tilde\psi_{j_3}(\eta_3)\right\|_{S^\infty}\bigg\| \frac{\tilde{\tilde\psi}_{j_1}(\eta_1)\tilde{\tilde\psi}_{j_2}(\eta_2) \tilde{\tilde\psi}_{j_3}(\eta_3)}{\eta_1}\bigg\|_{S^\infty}.
\end{align}

In view of \eqref{est_symb1'}, we only need to estimate the second term. To this end, we have
\begin{align*}
&\left\|\frac{1}{\eta_1}\tilde{\tilde\psi}_{j_1}(\eta_1)\tilde{\tilde\psi}_{j_2}(\eta_2)\tilde{\tilde\psi}_{j_3}(\eta_3)\right\|_{S^\infty}=\left\|\int_{\R}\eta_1^{-1} \tilde{\tilde\psi}_{j_1}(\eta_1) e^{i\eta_1 y_1}\diff \eta_1 \F^{-1}[\tilde{\tilde\psi}_{j_2}](y_2) \F^{-1}[\tilde{\tilde\psi}_{j_3}](y_3) \right\|_{L^1}\lesssim 2^{-j_1}.
\end{align*}
Therefore, by \eqref{lm81eq1} and considering all the possible relations between $j_1$, $j_2$, and $j_3$, we obtain \eqref{est_symb2}.
\end{proof}

Applying the above lemmas to \eqref{kappaest} and \eqref{F-1W1}, we obtain
\begin{align*}
\|W_1\|_{L^\infty_\xi}\lesssim (t+1)^{-1} \left[\|\px\vp_{\max\{j_1,j_2\}}\|_{L^\infty}\|\vp_{j_3}\|_{L^2}\|\vp_{\min\{j_1,j_2\}}\|_{L^2} + \|\px\vp_{\max\{j_2,j_3\}}\|_{L^\infty}\|\vp_{j_1}\|_{L^2}\|\vp_{\min\{j_2,j_3\}}\|_{L^2}\right].
\end{align*}
Since the two terms are symmetric in $j_1$ and $j_3$, it suffices to estimate one of them, as the other one is similar. We use lemma \ref{disp} and get
\begin{multline*}
\|\px\vp_{\max\{j_1,j_2\}}\|_{L^\infty}\lesssim
(t+1)^{-1/2}\||\xi|^{1.5}\hat{h}_{\max\{j_1,j_2\}}\|_{L^\infty_\xi}\\
+(t+1)^{-3/4}\left[\||\px|^{0.75}P_{\max\{j_1,j_2\}}(x\px h)\|_{L^2}+\||\px|^{0.75} h_{\max\{j_1,j_2\}}\|_{L^2}  \right].
\end{multline*}
Therefore,
\begin{align*}
\|W_1\|_{L^\infty_\xi}&\lesssim (t+1)^{-1.5}\Big(\mathbf1_{\max\{j_1,j_2\}\leq 0}2^{0.5\max\{j_1,j_2\}}\||\xi|\hat{h}_{\max\{j_1,j_2\}}\|_{L^\infty_\xi} \\
&\qquad+\mathbf1_{\max\{j_1,j_2\}> 0}2^{(-1.5-r)\max\{j_1,j_2\}}\||\xi|^{r+3}\hat{h}_{\max\{j_1,j_2\}}\|_{L^\infty_\xi} \Big) \cdot\|\vp_{j_3}\|_{L^2}\|\vp_{\min\{j_1,j_2\}}\|_{L^2}\\
&\qquad+(t+1)^{-1.75}\left[\||\px|^{0.75}P_{\max\{j_1,j_2\}}(x\px h)\|_{L^2}+\||\px|^{0.75} h_{\max\{j_1,j_2\}}\|_{L^2}  \right] \|\vp_{j_3}\|_{L^2}\|\vp_{\min\{j_1,j_2\}}\|_{L^2}\\
& \qquad +(t+1)^{-1.5}\Big(\mathbf1_{\max\{j_2,j_3\}\leq 0}2^{0.5\max\{j_2,j_3\}}\||\xi|\hat{h}_{\max\{j_2,j_3\}}\|_{L^\infty_\xi} \\
&\qquad+\mathbf1_{\max\{j_2,j_3\}> 0}2^{(-1.5-r)\max\{j_2,j_3\}}\||\xi|^{r+3}\hat{h}_{\max\{j_2,j_3\}}\|_{L^\infty_\xi} \Big) \cdot\|\vp_{j_1}\|_{L^2}\|\vp_{\min\{j_2,j_3\}}\|_{L^2}\\
&\qquad+(t+1)^{-1.75}\left[\||\px|^{0.75}P_{\max\{j_2,j_3\}}(x\px h)\|_{L^2}+\||\px|^{0.75} h_{\max\{j_2,j_3\}}\|_{L^2}  \right] \|\vp_{j_1}\|_{L^2}\|\vp_{\min\{j_2,j_3\}}\|_{L^2}\\
&\lesssim (t+1)^{-1.5}\Big(\mathbf1_{\max\{j_1,j_2\}\leq 0}2^{0.5\max\{j_1,j_2\}}+\mathbf1_{\max\{j_1,j_2\}> 0}2^{(-1.5-r)\max\{j_1,j_2\}} \Big)\\
& \hspace{3in} \cdot \|h_{\max\{j_1,j_2\}}\|_Z\|\vp_{j_3}\|_{L^2}\|\vp_{\min\{j_1,j_2\}}\|_{L^2}\\
&\qquad+(t+1)^{-1.75}\left[\||\px|^{0.75}P_{\max\{j_1,j_2\}}(x\px h)\|_{L^2}+\||\px|^{0.75} h_{\max\{j_1,j_2\}}\|_{L^2}  \right] \|\vp_{j_3}\|_{L^2}\|\vp_{\min\{j_1,j_2\}}\|_{L^2}\\
& \qquad+(t+1)^{-1.5}\Big(\mathbf1_{\max\{j_2,j_3\}\leq 0}2^{0.5\max\{j_2,j_3\}}+\mathbf1_{\max\{j_2,j_3\}> 0}2^{(-1.5-r)\max\{j_2,j_3\}} \Big)\\*
& \hspace{3in} \cdot \|h_{\max\{j_2,j_3\}}\|_Z\|\vp_{j_1}\|_{L^2}\|\vp_{\min\{j_2,j_3\}}\|_{L^2}\\
&\qquad+(t+1)^{-1.75}\left[\||\px|^{0.75}P_{\max\{j_2,j_3\}}(x\px h)\|_{L^2}+\||\px|^{0.75} h_{\max\{j_2,j_3\}}\|_{L^2}  \right] \|\vp_{j_1}\|_{L^2}\|\vp_{\min\{j_2,j_3\}}\|_{L^2}.
\end{align*}

{\bf Estimate of $W_2$ and $W_3$.}
We rewrite $W_2$ as
\[
\iint_{\R^2} \left[ \frac{\Tb_1(\eta_1, \eta_2, \xi - \eta_1 - \eta_2)}{it\partial_{\eta_1}\Phi(\xi,\eta_1,\eta_2) (\xi - \eta_1 - \eta_2)}\right] e^{it\Phi(\xi,\eta_1,\eta_2)} \hat h_{j_1}(\eta_1)\hat h_{j_2}(\eta_2) \left[(\xi - \eta_1 - \eta_2)\partial_{\eta_1}\hat h_{j_3}(\xi-\eta_1-\eta_2)\right] \diff{\eta_1} \diff{\eta_2}.
\]
In view of the multilinear estimate Lemma \ref{multilinear}, we need to estimate the $S^\infty$-norm of the symbol
\[
\frac{\Tb_1(\eta_1,\eta_2,\eta_3)}{(\log|\eta_1|-\log|\eta_3|)\eta_3}\psi_{j_1}(\eta_1)\psi_{j_2}(\eta_2)\tilde\psi_{j_3}(\eta_3).
\]
Using Lemma \ref{log-log} and Lemma \ref{estT1}, as in the estimates of $W_1$, we obtain
\begin{align*}
\|W_2\|_{L^\infty_\xi}\lesssim (t+1)^{-1}\|\px\vp_{\max\{j_1,j_2\}}\|_{L^\infty}\|\xi\partial_\xi\hat{h}_{j_3}\|_{L^2_\xi}\|\vp_{\min\{j_1,j_2\}}\|_{L^2}.
\end{align*}
Using Lemma \ref{disp}, we have
\begin{align*}
& \lesssim (t+1)^{-1.5}\Big(\mathbf1_{\max\{j_1,j_2\}\leq 0}2^{0.5\max\{j_1,j_2\}}+\mathbf1_{\max\{j_1,j_2\}> 0}2^{(-1.5-r)\max\{j_1,j_2\}} \Big)\\
& \hspace{3in} \cdot\|h_{\max\{j_1,j_2\}}\|_Z  \|\xi\partial_\xi\hat{h}_{j_3}\|_{L^2_\xi}\|\vp_{\min\{j_1,j_2\}}\|_{L^2}\\
& \qquad +(t+1)^{-1.75}\left[\||\px|^{0.75}P_{\max\{j_1,j_2\}}(x\px h)\|_{L^2}+\||\px|^{0.75} h_{\max\{j_1,j_2\}}\|_{L^2}  \right] \|\xi\partial_\xi\hat{h}_{j_3}\|_{L^2_\xi}\|\vp_{\min\{j_1,j_2\}}\|_{L^2}.
\end{align*}

Similarly, we have
\begin{align*}
\|W_3\|_{L^\infty_\xi}
&\lesssim (t+1)^{-1.5}\Big(\mathbf1_{\max\{j_2,j_3\}\leq 0}2^{0.5\max\{j_2,j_3\}}+\mathbf1_{\max\{j_2,j_3\}> 0}2^{(-1.5-r)\max\{j_2,j_3\}} \Big)\\
& \hspace{3in}\cdot \|{h}_{\max\{j_2,j_3\}}\|_{Z}  \|\xi\partial_\xi\hat{h}_{j_1}\|_{L^2_\xi}\|\vp_{\min\{j_2,j_3\}}\|_{L^2}\\
& \qquad +(t+1)^{-1.75}\left[\||\px|^{0.75}P_{\max\{j_2,j_3\}}(x\px h)\|_{L^2}+\||\px|^{0.75} h_{\max\{j_2,j_3\}}\|_{L^2}  \right] \|\xi\partial_\xi\hat{h}_{j_1}\|_{L^2_\xi}\|\vp_{\min\{j_2,j_3\}}\|_{L^2}.
\end{align*}

In conclusion, for nonresonant frequencies,
\begin{align*}
&\bigg\|\xi(|\xi|+|\xi|^{r+4})\cutoffxi(\xi,t)\iint_{\R^2} \Tb_1(\eta_1,\eta_2,\xi-\eta_1-\eta_2)e^{it\Phi(\xi,\eta_1,\eta_2)}\hat h_{j_1}(\eta_1)\hat h_{j_2}(\eta_2)\hat h_{j_3}(\xi-\eta_1-\eta_2)\diff\eta_1\diff\eta_2\bigg\|_{L^\infty_\xi}\\
\lesssim~ &(t+1)^{(r+5)p_1}\left(\|W_1\|_{L^\infty_\xi}+\|W_2\|_{L^\infty_\xi}+\|W_3\|_{L^\infty_\xi}\right)\\
\lesssim~ & (t+1)^{-1.5+(r+5)p_1}\Big(  \|\vp_{j_1}\|_{L^2}\|\vp_{\min\{j_2,j_3\}}\|_{L^2} + \|\xi\partial_\xi\hat{h}_{j_1}\|_{L^2_\xi}\|\vp_{\min\{j_2,j_3\}}\|_{L^2}\Big)\\
&\hspace{3cm}\cdot\Big(\mathbf1_{\max\{j_2,j_3\}\leq 0}2^{0.5\max\{j_2,j_3\}}+\mathbf1_{\max\{j_2,j_3\}> 0}2^{(-1.5-r)\max\{j_2,j_3\}} \Big)\|{h}_{\max\{j_2,j_3\}}\|_{Z}\\[1ex]
& +(t+1)^{-1.5+(r+5)p_1}\left(\|\vp_{j_3}\|_{L^2} \|\vp_{\min\{j_1, j_2\}}\|_{L^2} + \|\xi\partial_\xi\hat{h}_{j_3}\|_{L^2_\xi}\|\vp_{\min\{j_1,j_2\}}\|_{L^2}\right)\\
&\hspace{3cm}\cdot\Big(\mathbf1_{\max\{j_1,j_2\}\leq 0}2^{0.5\max\{j_1,j_2\}}+\mathbf1_{\max\{j_1,j_2\}> 0}2^{(-1.5-r)\max\{j_1,j_2\}} \Big)\|h_{\max\{j_1,j_2\}}\|_Z \\[1ex]
&+(t+1)^{-1.75+(r+5)p_1}\bigg[\Big(\||\px|^{0.75}P_{\max\{j_1,j_2\}}(x\px h)\|_{L^2}+\||\px|^{0.75} h_{\max\{j_1,j_2\}}\|_{L^2} \Big) \|\vp_{j_3}\|_{L^2}\|\vp_{\min\{j_1,j_2\}}\|_{L^2}\\
&\hspace{3cm}+\Big(\||\px|^{0.75}P_{\max\{j_2,j_3\}}(x\px h)\|_{L^2}+\||\px|^{0.75} h_{\max\{j_2,j_3\}}\|_{L^2} \Big) \|\vp_{j_1}\|_{L^2}\|\vp_{\min\{j_2,j_3\}}\|_{L^2}\\
&\hspace{3cm}+\Big(\||\px|^{0.75}P_{\max\{j_1,j_2\}}(x\px h)\|_{L^2}+\||\px|^{0.75} h_{\max\{j_1,j_2\}}\|_{L^2} \Big) \|\xi\partial_\xi\hat{h}_{j_3}\|_{L^2_\xi}\|\vp_{\min\{j_1,j_2\}}\|_{L^2}\\
&\hspace{3cm}+\Big(\||\px|^{0.75}P_{\max\{j_2,j_3\}}(x\px h)\|_{L^2}+\||\px|^{0.75} h_{\max\{j_2,j_3\}}\|_{L^2}  \Big) \|\xi\partial_\xi\hat{h}_{j_1}\|_{L^2_\xi}\|\vp_{\min\{j_2,j_3\}}\|_{L^2}\bigg].
\end{align*}
By the bootstrap assumptions and Lemma~\ref{lem:xdh}, the right-hand-side is summable for $j_1,j_2,j_3$ and the sum is integrable for $t\in (0,\infty)$.

\subsection{Close to the resonance}\label{sec:near}
When
\begin{align}
\label{resregion}
\max\{j_1,j_2,j_3\}<10^{-3}\log_2(t+1),\qquad |j_3-j_2|\leq 1, \qquad|j_3-j_1|\leq 1,
\end{align}
we need to consider the following two cases:
\begin{enumerate}[(i)]
\item Frequencies $\eta_1, \eta_2$ and $\xi-\eta_1-\eta_2$ have the same sign.

By the definition of cutoff function $\psi$, we have
\[
\frac58 2^{j_1} \leq |\eta_1|\leq\frac85 2^{j_1},\quad \frac58 2^{j_2} \leq |\eta_2|\leq\frac85 2^{j_2},\quad \frac58 2^{j_3} \leq |\xi-\eta_1-\eta_2|\leq\frac85 2^{j_3},
\]
and thus,
\[
\frac58 (2^{j_1}+2^{j_2}+ 2^{j_3})\leq |\xi| \leq \frac85 (2^{j_1}+2^{j_2}+ 2^{j_3}).
\]
This corresponds to the region near the {\it space resonance} $\eta_1 = \eta_2 = \xi - \eta_1 - \eta_2 = \xi / 3$.

\item Frequencies $\eta_1, \eta_2$ and $\xi-\eta_1-\eta_2$ do not have the same sign.

This corresponds to the region near the {\it space-time resonances} $(\eta_1,\eta_2)=(\xi,\xi)$, $(\xi,-\xi)$, or $(-\xi,\xi)$ separately. Since the symbol $\Tb'_1(\eta_1, \eta_2, \eta_3)$ is symmetric in $\eta_1$, $\eta_2$, and $\eta_3$, it suffices to \eqref{cubdyadic} in the region near $(\xi, \xi)$.
\end{enumerate}

To estimate \eqref{cubdyadic} in the region \eqref{resregion}, we decompose the region further. Denoting $(\xi_1,\xi_2, \xi_3)=(\xi,\xi, -\xi)$ or $(\frac\xi3,\frac\xi3, \frac\xi3)$, we decompose \eqref{resregion} using the new cutoff functions $\psi_{k_1}$ and $\psi_{k_2}$.
Using the fact that
\[
\sum\limits_{(k_1,k_2)\in\Z^2}\psi_{k_1}(\eta_1-\xi_1)\psi_{k_2}(\eta_2-\xi_2)=1,
\]
we write the integral \eqref{cubdyadic} as
\begin{multline*}
\iint_{\R^2}   \Tb_1(\eta_1, \eta_2, \xi - \eta_1 - \eta_2) e^{it\Phi(\xi,\eta_1,\eta_2)} \hat h_{j_1}(\eta_1)\hat h_{j_2}(\eta_2) \hat h_{j_3}(\xi-\eta_1-\eta_2) \\
\cdot \bigg[\sum\limits_{k_1=-\infty}^{\max\{j_1,j_3\}+1}\psi_{k_1}(\eta_1-\xi_1) \bigg] \cdot \bigg[\sum\limits_{k_2=-\infty}^{\max\{j_2,j_3\}+1}\psi_{k_2}(\eta_2-\xi_2)\bigg] \diff{\eta_1} \diff{\eta_2},
\end{multline*}
where
\[
\bigg[\sum\limits_{k_1=-\infty}^{\max\{j_1,j_3\}+1}\psi_{k_1}(\eta_1-\xi_1) \bigg] \cdot \bigg[\sum\limits_{k_2=-\infty}^{\max\{j_2,j_3\}+1}\psi_{k_2}(\eta_2-\xi_2)\bigg]=1
\]
on the support of $\hat h_{j_1}(\eta_1)\hat h_{j_2}(\eta_2) \hat h_{j_3}(\xi-\eta_1-\eta_2)$.
Thus, we need to consider
\begin{align}
\label{resInt1}
\begin{split}
 & \iint_{\R^2}   \Tb_1(\eta_1, \eta_2, \xi - \eta_1 - \eta_2)  e^{it\Phi(\xi,\eta_1,\eta_2)} \hat h_{j_1}(\eta_1)\hat h_{j_2}(\eta_2) \hat h_{j_3}(\xi-\eta_1-\eta_2)\\
& \qquad\cdot \psi_{k_1}(\eta_1-\xi_1) \psi_{k_2}(\eta_2-\xi_2)  \diff{\eta_1} \diff{\eta_2}.
\end{split}
\end{align}
In this subsection, we restrict our attention to
\[
k_1\geq \log_2[\varrho(t)] \qquad \text{or} \qquad k_2\geq \log_2[\varrho(t)],
\]
 where
 \begin{align}\label{rho}
 \varrho(t)=(t+1)^{-0.49}.
 \end{align}
The case of $k_1< \log_2[\varrho(t)]$ and $k_2< \log_2[\varrho(t)]$, related to the resonant frequencies, will be discussed in Section \ref{sec:res}.

Since these expressions are symmetric in $\eta_1$ and $\eta_2$, we assume without loss of generality that $j_1 \geq k_1\geq k_2 \geq \log_2[\varrho(t)]$. The other case can be discussed in the samiliar way.

Using integrating by parts, we can write \eqref{resInt1} as
\begin{align*}
& \iint_{\R^2}   \frac{\Tb_1(\eta_1, \eta_2, \xi - \eta_1 - \eta_2)}{2it(\log|\eta_1|-\log|\xi-\eta_1-\eta_2|)}\partial_{\eta_1} e^{it\Phi(\xi,\eta_1,\eta_2)} \hat h_{j_1}(\eta_1) \hat h_{j_2}(\eta_2) \hat h_{j_3}(\xi-\eta_1-\eta_2)\\
& \hspace{2in} \cdot \psi_{k_1}(\eta_1-\xi_1) \psi_{k_2}(\eta_2-\xi_2)  \diff{\eta_1} \diff{\eta_2}\\
  =~&\frac{i}{2 t} (V_1+V_2+V_3+V_4),
\end{align*}
where
\begin{align*}
V_1(\xi, t)=&\iint_{\R^2}   \partial_{\eta_1}\left[\frac{\Tb_1(\eta_1, \eta_2, \xi - \eta_1 - \eta_2)}{\log|\eta_1|-\log|\xi-\eta_1-\eta_2|}\right] e^{it\Phi(\xi,\eta_1,\eta_2)} \hat h_{j_1}(\eta_1)\hat h_{j_2}(\eta_2) \hat h_{j_3}(\xi-\eta_1-\eta_2)\\
& \hspace{2in} \cdot \psi_{k_1}(\eta_1-\xi_1)  \psi_{k_2}(\eta_2-\xi_2)  \diff{\eta_1} \diff{\eta_2},\\[1ex]
   V_2(\xi, t)=& \iint_{\R^2}   \left[\frac{\Tb_1(\eta_1, \eta_2, \xi - \eta_1 - \eta_2)}{\log|\eta_1|-\log|\xi-\eta_1-\eta_2|}\right] e^{it\Phi(\xi,\eta_1,\eta_2)} \partial_{\eta_1}\hat h_{j_1}(\eta_1)\hat h_{j_2}(\eta_2) \hat h_{j_3}(\xi-\eta_1-\eta_2)\\
   & \hspace{2in} \cdot \psi_{k_1}(\eta_1-\xi_1) \psi_{k_2}(\eta_2-\xi_2)  \diff{\eta_1} \diff{\eta_2},\\[1ex]
  V_3(\xi, t)=& \iint_{\R^2}   \left[\frac{\Tb_1(\eta_1, \eta_2, \xi - \eta_1 - \eta_2)}{\log|\eta_1|-\log|\xi-\eta_1-\eta_2|}\right] e^{it\Phi(\xi,\eta_1,\eta_2)} \hat h_{j_1}(\eta_1)\hat h_{j_2}(\eta_2) \partial_{\eta_1}\hat h_{j_3}(\xi-\eta_1-\eta_2)\\
  & \hspace{2in} \cdot \psi_{k_1}(\eta_1-\xi_1) \psi_{k_2}(\eta_2-\xi_2)  \diff{\eta_1} \diff{\eta_2},\\[1ex]
  V_4(\xi, t)=& \iint_{\R^2}   \left[\frac{\Tb_1(\eta_1, \eta_2, \xi - \eta_1 - \eta_2)}{\log|\eta_1|-\log|\xi-\eta_1-\eta_2|}\right] e^{it\Phi(\xi,\eta_1,\eta_2)} \hat h_{j_1}(\eta_1)\hat h_{j_2}(\eta_2) \hat h_{j_3}(\xi-\eta_1-\eta_2)\\
  & \hspace{2in} \cdot \partial_{\eta_1} \psi_{k_1}(\eta_1-\xi_1) \psi_{k_2}(\eta_2-\xi_2) \diff{\eta_1} \diff{\eta_2}.
\end{align*}

{\bf Estimate of $V_1$.} We first denote the symbol for $V_1$ as
\begin{align*}
& m(\eta_1,\eta_2,\xi)\\
= ~& \frac{-2}{\log|\eta_1|-\log|\xi-\eta_1-\eta_2|} \cdot \Big[\eta_1 \log|\eta_1| - (\eta_1 + \eta_2) \log|\eta_1 + \eta_2|\\
&\hspace{2in} + (\xi - \eta_1) \log|\xi - \eta_1| - (\xi - \eta_1 - \eta_2) \log|\xi - \eta_1 - \eta_2|\Big]\\
& - \frac{\eta_1^{-1} + (\xi - \eta_1 - \eta_2)^{-1}}{(\log|\eta_1|-\log|\xi-\eta_1-\eta_2|)^2} \cdot \Big[-\eta_1^2 \log|\eta_1| - \eta_2^2 \log|\eta_2| - \eta_3^2 \log|\eta_3| - (\eta_1 + \eta_2 + \eta_3)^2 \log|\eta_1 + \eta_2 + \eta_3|\\
& \qquad + (\eta_1 + \eta_2)^2 \log|\eta_1 + \eta_2| + (\eta_1 + \eta_3)^2 \log|\eta_1 + \eta_3| + (\eta_2 + \eta_3)^2 \log|\eta_2 + \eta_3|\Big].
\end{align*}

Denote $\upsilon_i=\eta_i-\xi_i$, $i=1,2$, and it suffices to estimate
\begin{align*}
\bigg\|\iint_{\R^2} & m(\upsilon_1+\xi_1,\upsilon_2+\xi_2,\xi) e^{i t \Phi(\xi,\upsilon_1+\xi_1,\upsilon_2+\xi_2)} \hat h_{j_1}(\upsilon_1+\xi_1)\hat h_{j_2}(\upsilon_2+\xi_2) \hat h_{j_3}(\xi_3-\upsilon_1-\upsilon_2) \\
& \hspace{3.5in} \cdot \psi_{k_1}(\upsilon_1)  \psi_{k_2}(\upsilon_2) \diff{\upsilon_1} \diff{\upsilon_2}\bigg\|_{L^\infty_\xi}.
\end{align*}

Using Lemma \ref{multilinear}, we have
\begin{align*}
\|V_1\|_{L^\infty_\xi}
\lesssim~&\|\chi_{j_1,j_3}^{k_1,k_2}(\upsilon_1, \upsilon_2, \xi) m(\upsilon_1+\xi_1,\upsilon_2+\xi_2,\xi)\|_{S^\infty_{\upsilon_1,\upsilon_2}L^\infty_\xi} \\
&\cdot\| \hat \vp_{j_1}(\upsilon_1+\xi_1)\psi_{k_1}(\upsilon_1)\|_{L^2_{\upsilon_1}L^\infty_\xi} \left\|\hat \vp_{j_2}(\upsilon_2+\xi_2)\psi_{k_2}(\upsilon_2)\right\|_{L^2_{\upsilon_2}L^\infty_\xi} \|\vp_{j_3}\|_{L^\infty}
\end{align*}
where
\[
\chi_{j_1,j_3}^{k_1,k_2}(\upsilon_1,\upsilon_2,\xi)=\tilde\psi_{k_1}(\upsilon_1)\tilde\psi_{k_2}(\upsilon_2)\tilde\psi_{j_1}(\upsilon_1+\xi_1) \tilde\psi_{j_2}(\upsilon_2+\xi_2) \tilde\psi_{j_3}(\xi_3-\upsilon_1-\upsilon_2)\chi(\xi).
\]

\noindent (i) If $(\xi_1,\xi_2,\xi_3)=(\xi/3, \xi/3, \xi/3)$, since $S^\infty$-norm is rotational and scaling invariant, setting $w_1=\upsilon_1$, $w_2=-2\upsilon_1-\upsilon_2$, and using \eqref{SymEst}, we have
\begin{align*}
&\|\chi_{j_1,j_3}^{k_1,k_2}(\upsilon_1,\upsilon_2,\xi) m(\upsilon_1+\xi_1,\upsilon_2+\xi_2,\xi)\|_{S^\infty_{\upsilon_1,\upsilon_2}L^\infty_\xi}\\
 =~&\|\chi_{j_1,j_3}^{k_1,k_2}(w_1, -2w_1-w_2,\xi) m(w_1+\xi_1,-2w_1-w_2+\xi_2,\xi)\|_{S^\infty_{w_1,w_2}L^\infty_\xi}\\
 \lesssim~&\|\chi_{j_1,j_3}^{k_1,k_2}(w_1, -2w_1-w_2,\xi) m(w_1+\xi_1,-2w_1-w_2+\xi_2,\xi)\|_{L^1_{w_1w_2}L^\infty_\xi}^{1/4}\\
 &\cdot\|\partial_{w_1}^2[\chi_{j_1,j_3}^{k_1,k_2}(w_1, -2w_1-w_2,\xi) m(w_1+\xi_1,-2w_1-w_2+\xi_2,\xi)]\|_{L^1_{w_1w_2}L^\infty_\xi}^{1/2}\\
 &\cdot\|\partial_{w_1}^2\partial_{w_2}^2[\chi_{j_1,j_3}^{k_1,k_2}(w_1, -2w_1-w_2,\xi) m(w_1+\xi_1,-2w_1-w_2+\xi_2,\xi)]\|_{L^1_{w_1w_2}L^\infty_\xi}^{1/4}\\
 \lesssim~& (1 + |j_1|) (2^{j_1+k_1}2^{j_1})^{1/4} (2^{- j_1 +k_1}2^{j_1})^{1/2} (2^{- j_1 - k_1}2^{j_1})^{1/4}\\
 =~& (1 + |j_1|) \cdot 2^{(j_1+k_1)/2},
\end{align*}
where we have used the estimate
\begin{align*}
\left|\frac{\chi_{j_1,j_3}^{k_1,k_2}}{\log|w_1+\frac\xi3| - \log|\frac\xi3+w_1+w_2|}\right| &\lesssim 2^{j_1-k_1},\\
\left|\chi_{j_1,j_3}^{k_1,k_2} \partial_{w_1}^2\frac{1}{\log|w_1+\frac\xi3| - \log|\frac\xi3+w_1+w_2|}\right|& \lesssim 2^{3(j_1-k_1)}2^{2(-2 j_1 + k_1)} =2^{- j_1 - k_1},\\
\left|\chi_{j_1,j_3}^{k_1,k_2}  \partial_{w_1}^2\partial_{w_2}^2\frac{1}{\log|w_1+\frac\xi3| - \log|\frac\xi3+w_1+w_2|}\right| & \lesssim 2^{5(j_1-k_1)}2^{-2j_1} 2^{2(-2 j_1+k_1)} =2^{-j_1-3k_1}.
\end{align*}
Therefore, using \eqref{psi-L2}, \eqref{LocDis}, and \eqref{resregion}, we obtain
\begin{align}
\label{est_V1}
\begin{split}
\|V_1\|_{L^\infty_\xi}&\lesssim (1 + |j_1|) 2^{ (j_1+k_1)/2 - j_3}(t+1)^{-1}\| \hat \vp_{j_1}(\upsilon_1+\xi_1)\psi_{k_1}(\upsilon_1)\|_{L^2_{\upsilon_1}L^\infty_\xi} \left\|\hat \vp_{j_2}(\upsilon_2+\xi_2)\psi_{k_2}(\upsilon_2)\right\|_{L^2_{\upsilon_2}L^\infty_\xi} \|\px \vp_{j_3}\|_{L^\infty}\\
&\lesssim (1 + |j_1|) 2^{- 0.5 j_1+k_1+0.5k_2}\|\psi_{k_1} \hat\vp_{j_1}\|_{L^\infty_{\xi}}\|\psi_{k_2} \hat\vp_{j_2}\|_{L^\infty_\xi} \Big\{(t+1)^{-1.5}\||\xi|^{3 / 2}\hat{h}_{j_3}^{k_1,k_2}\|_{L^\infty_\xi} \\
& \hspace{1in} +(t+1)^{-1.75}\big[\||\px|^{3 / 4} P_{j_3}^{k_1,k_2}(x\px h)\|_{L^2}+\||\px|^{3 / 4} h_{j_3}^{k_1,k_2}\|_{L^2}  \big] \Big\}.
\end{split}
\end{align}

\noindent (ii) If $(\xi_1,\xi_2,\xi_3)=(\xi,\xi,-\xi)$, we use \eqref{SymEst} to obtain
\begin{align*}
&\|\chi_{j_1,j_3}^{k_1,k_2}(\upsilon_1,\upsilon_2,\xi) m(\upsilon_1+\xi_1,\upsilon_2+\xi_2,\xi)\|_{S^\infty_{\upsilon_1,\upsilon_2}L^\infty_\xi}\\
 \lesssim~& \|\chi_{j_1,j_3}^{k_1,k_2}(\upsilon_1,\upsilon_2,\xi) m(\upsilon_1+\xi_1,\upsilon_2+\xi_2,\xi)\|_{L^1_{\upsilon_1\upsilon_2}}^{1/4}\|\partial_{\upsilon_1}^2[\chi_{j_1,j_3}^{k_1,k_2}(\upsilon_1,\upsilon_2,\xi) m(\upsilon_1+\xi_1,\upsilon_2+\xi_2,\xi)]\|_{L^1_{\upsilon_1\upsilon_2}}^{1/2}\\
 &\cdot\|\partial_{\upsilon_1}^2\partial_{\upsilon_2}^2[\chi_{j_1,j_3}^{k_1,k_2}(\upsilon_1,\upsilon_2,\xi) m(\upsilon_1+\xi_1,\upsilon_2+\xi_2,\xi)]\|_{L^1_{\upsilon_1\upsilon_2}}^{1/4}
\\
 \lesssim~ & (1 + |j_1|) (2^{j_1+k_1}2^{j_1})^{1/4} (2^{-j_1+k_1}2^{j_1})^{1/2} (2^{-j_1-2k_2+k_1}2^{j_1})^{1/4} \\
 =~& (1 + |j_1|) \cdot 2^{ \frac12j_1+k_1-\frac12k_2},
\end{align*}
where we have used the estimates
\begin{align*}
\left|\frac{\chi_{j_1,j_3}^{k_1,k_2}}{\log|\upsilon_1+\xi| - \log|-\xi-\upsilon_1-\upsilon_2|}\right| &\lesssim 2^{j_1-k_2},\\
\left|\chi_{j_1,j_3}^{k_1,k_2}\partial_{\upsilon_1}^2\frac{1}{\log|\upsilon_1+\xi| - \log|-\xi-\upsilon_1-\upsilon_2|}\right| &\lesssim 2^{3(j_1-k_2)}2^{2(-2 j_1+k_2)}=2^{-j_1-k_2},\\
 \left|\chi_{j_1,j_3}^{k_1,k_2}\partial_{\upsilon_1}^2\partial_{\upsilon_2}^2\frac{1}{\log|\upsilon_1+\xi| - \log|-\xi-\upsilon_1-\upsilon_2|}\right| &\lesssim 2^{5(j_1-k_2)}2^{-2 j_1}2^{2(-2 j_1 + k_2)}=2^{- j_1-3k_2}.
\end{align*}

Therefore, using \eqref{psi-L2}, \eqref{LocDis}, and \eqref{resregion}
\begin{align}
\begin{split}
\|V_1\|_{L^\infty_\xi}
&\lesssim (1 + |j_1|) 2^{0.5 j_1 - j_3 + k_1 - 0.5k_2}(t+1)^{-1}\| \hat \vp_{j_1}(\upsilon_1+\xi_1)\psi_{k_1}(\upsilon_1)\|_{L^2_{\upsilon_1}L^\infty_\xi} \left\|\hat \vp_{j_2}(\upsilon_2+\xi_2)\psi_{k_2}(\upsilon_2)\right\|_{L^2_{\upsilon_2}L^\infty_\xi} \|\px \vp_{j_3}\|_{L^\infty}\\
&\lesssim (1 + |j_1|) 2^{- 0.5 j_1 + 1.5k_1}\|\psi_{k_1} \hat\vp_{j_1}\|_{L^\infty_{\xi}} \|\psi_{k_2} \hat\vp_{j_2}\|_{L^\infty_\xi} \Big\{(t+1)^{-1.5}\||\xi|^{3 / 2}\hat{h}_{j_3}\|_{L^\infty_\xi} \\
& \hspace{1in} +(t+1)^{-1.75}\big[\||\px|^{3 / 4} P_{j_3}(x\px h)\|_{L^2}+\||\px|^{3 / 4} h_{j_3}\|_{L^2}  \big] \Big\}.
\end{split}
\end{align}

{\bf Estimates of $V_2$--$V_4$.}
The estimates for $V_2$--$V_4$ are similar to $V_1$. We omit the details here. The resulting estimates are as follows.

\noindent (i) If $(\xi_1,\xi_2, \xi_3)=(\frac\xi3,\frac\xi3, \frac\xi3)$, the symbol can be estimated as
\begin{align*}
&\left\|\frac{\Tb'_1(\xi_1+\upsilon_1, \xi_2+\upsilon_2, \xi_3-\upsilon_1-\upsilon_2)}{\log|\xi_1+\upsilon_1| - \log|\xi_3-\upsilon_1-\upsilon_2|}\right\|_{S^\infty_{\upsilon_1\upsilon_2}L^\infty_\xi} \lesssim 2^{1.5 j_1+0.5k_1}.
\end{align*}

\noindent (ii) If $(\xi_1,\xi_2, \xi_3)=(\xi,\xi, -\xi)$, the symbol can be estimated as
\begin{align*}
&\left\|\chi_{j_1, j_3}^{k_1, k_2}(\upsilon_1, \upsilon_2, \xi) \frac{\Tb'_1(\xi_1+\upsilon_1, \xi_2+\upsilon_2, \xi_3-\upsilon_1-\upsilon_2)}{\log|\xi_1+\upsilon_1| - \log|\xi_3-\upsilon_1-\upsilon_2|}\right\|_{S^\infty_{\upsilon_1\upsilon_2}L^\infty_\xi}\\
\lesssim~&(2^{j_1+k_1}2^{2j_1})^{1/4} (2^{- j_1+k_1}2^{2j_1})^{1/2} (2^{- j_1-2k_2+k_1}2^{2j_1})^{1/4} \\
 =~& (1 + |j_1|) \cdot 2^{1.5 j_1+k_1-0.5k_2}.
\end{align*}

In either case, we have the following estimates
\begin{align}
\begin{split}
 \|V_2\|_{L^\infty_\xi}
&\lesssim (1 + |j_1|) 2^{-0.5 j_1+k_1}\|\eta_1 \partial_{\eta_1}\hat\vp_{j_1}(\eta_1)\|_{L^2_{\eta_1}} \|\psi_{k_2} \hat\vp_{j_2}\|_{L^\infty_\xi}\\*
& \qquad \cdot  \Big\{(t+1)^{-1.5}\||\xi|^{3 / 2}\hat{h}_{j_3}\|_{L^\infty_\xi} +(t+1)^{-1.75}\big[\||\px|^{3 / 4} P_{j_3}(x\px h)\|_{L^2}+\||\px|^{3 / 4} h_{j_3}\|_{L^2}  \big] \Big\},
\end{split}\\
\begin{split}
\|V_3\|_{L^\infty_\xi}
&\lesssim (1 + |j_1|) 2^{-0.5 j_1+k_1}\|\eta_3 \partial_{\eta_3}\hat\vp_{j_3}(\eta_3)\|_{L^2_{\eta_3}}\|\psi_{k_2} \hat\vp_{j_2}\|_{L^\infty_\xi}\\*
& \qquad \cdot \Big\{(t+1)^{-1.5}\||\xi|^{3 / 2}\hat{h}_{j_1}\|_{L^\infty_\xi} +(t+1)^{-1.75}\big[\||\px|^{3 / 4} P_{j_1}(x\px h)\|_{L^2}+\||\px|^{3 / 4} h_{j_1}\|_{L^2}  \big] \Big\},
\end{split}\\
\begin{split}
\|V_4\|_{L^\infty_\xi}
&\lesssim (1 + |j_1|) 2^{0.5 j_1+0.5k_1} \|\psi_{k_1} \hat\vp_{j_1}\|_{L^\infty_{\xi}}\|\psi_{k_2} \hat\vp_{j_2}\|_{L^\infty_\xi}\\
& \qquad \cdot \Big\{(t+1)^{-1.5}\||\xi|^{3 / 2}\hat{h}_{j_3}\|_{L^\infty_\xi} +(t+1)^{-1.75}\big[\||\px|^{3 / 4} P_{j_3}(x\px h)\|_{L^2}+\||\px|^{3 / 4} h_{j_3}\|_{L^2}  \big] \Big\}.\label{est_V4}
\end{split}
\end{align}

Now we take the summation over $\log_2[\varrho(t)] \leq k_1, k_2 \leq \max\{j_1, j_3\} + 1$, and combine the estimates \eqref{est_V1}--\eqref{est_V4} to get
\begin{align*}
& \Bigg\|\xi (|\xi| + |\xi|^{r + 4}) \cutoffxi(\xi, t) \iint_{\R^2} \Tb_1'(\eta_1, \eta_2, \xi - \eta_1 - \eta_2) e^{i A t \Phi(\xi,\eta_1,\eta_2)} \hat h_{j_1}(\eta_1)\hat h_{j_2}(\eta_2) \hat h_{j_3}(\xi-\eta_1-\eta_2) \\*
& \hspace{2in} \cdot \bigg[\sum\limits_{k_1= \log_2[\varrho(t)]}^{\max\{j_1,j_3\}+1}\psi_{k_1}(\eta_1-\xi_1) \bigg] \cdot \bigg[\sum\limits_{k_2=\log_2[\varrho(t)]}^{\max\{j_2,j_3\}+1}\psi_{k_2}(\eta_2-\xi_2)\bigg] \diff{\eta_1} \diff{\eta_2}\Bigg\|_{L^\infty_\xi}\\
& \lesssim (1 + |j_1|) \left[\max\{j_1, j_3\} - \log_2[\varrho(t)]\right]^2 (t + 1)^{(r + 4) p_1}\\
& \qquad \cdot \left[\||\xi| \psi_{k_1} \hat\vp_{j_1}\|_{L^\infty_{\xi}}\||\xi| \psi_{k_2} \hat\vp_{j_2}\|_{L^\infty_\xi} + \|\eta_1 \partial_{\eta_1} \hat{\vp}_{j_1}(\eta_1)\|_{L^2_{\eta_1}} \||\xi| \psi_{k_2} \hat\vp_{j_2}\|_{L^\infty_\xi} +  \||\xi| \psi_{k_1} \hat\vp_{j_1}\|_{L^\infty_\xi} \|\eta_2 \partial_{\eta_2} \hat{\vp}_{j_2}(\eta_2)\|_{L^2_{\eta_2}}\right]\\
& \qquad \cdot \Big\{(t+1)^{-1.5}\||\xi|^{3 / 2}\hat{h}_{j_3}\|_{L^\infty_\xi} +(t+1)^{-1.75}\big[\||\px|^{3 / 4} P_{j_3}(x\px h)\|_{L^2}+\||\px|^{3 / 4} h_{j_3}\|_{L^2}  \big] \Big\}.
\end{align*}

The right-hand-side is summable with respect to $j_1, j_2, j_3$ under $|j_3-j_2|\leq 1$ and $|j_3-j_1|\leq 1$, since we can write
\begin{align*}
\begin{split}
& \||\xi|^{3 / 2} \hat h_j\|_{L^\infty_\xi}\lesssim  (\mathbf1_{j\leq 0}2^{j / 2}+\mathbf1_{j>0}2^{(-r - 3 / 2)j})\|h_j\|_{Z},
\end{split}
\end{align*}
and the resulting sum is integrable for $t\in (0,\infty)$.

\subsection{Resonant frequencies} \label{sec:res}
In this section, we estimate \eqref{resInt1} in the region
\[
|j_1-j_3|\leq 1, \qquad |j_2-j_3|\leq 1, \qquad k_1 < \log_2[\varrho(t)],\qquad k_2 < \log_2[\varrho(t)],
\]
and then sum over $k_1, k_2 < \log_2(\varrho(t))$.
After taking the sum, the cutoff function of the integrand is
\begin{align*}
\cutoff(\xi,\eta_1,\eta_2,t):=\psi\bigg(\frac{\eta_1 - \xi_1}{\varrho(t)}\bigg) \cdot \psi\bigg(\frac{\eta_2 - \xi_2}{\varrho(t)}\bigg).
\end{align*}
The support of this cutoff function is
\[
\left\{(\eta_1,\eta_2)\in \R^2\mid |\eta_1 - \xi_1|<\frac85 \varrho(t), ~|\eta_2 - \xi_2\big|<\frac85 \varrho(t)\right\},
\]
which can be written as the union of four disjoint sets $A_1\cup A_2\cup A_3\cup A_4$, where
\begin{align*}
A_1&=\bigg\{(\eta_1,\eta_2)~~\bigg{|}~~ \bigg|\eta_1-\frac{\xi}3\bigg|<\frac85 \varrho(t),\quad \bigg|\eta_2-\frac{\xi}3\bigg|<\frac85 \varrho(t)\bigg\},\\
A_2&=\bigg\{(\eta_1,\eta_2)~~\bigg{|}~~ \big|\eta_1-\xi\big|<\frac85 \varrho(t),\quad \big|\eta_2-\xi\big|<\frac85 \varrho(t)\bigg\},\\
A_3&=\bigg\{(\eta_1,\eta_2)~~\bigg{|}~~\big|(\eta_1-\xi)\big|<\frac85 \varrho(t),\quad \big|\eta_2-(-\xi)\big|<\frac85 \varrho(t) \bigg\},\\
A_4&=\bigg\{(\eta_1,\eta_2)~~\bigg{|}~~\big|\eta_1-(-\xi)\big|<\frac85 \varrho(t),\quad \big|\eta_2-\xi\big|<\frac85 \varrho(t) \bigg\}.
\end{align*}
We notice that the regions $A_1$, $A_2$, $A_3$, $A_4$ are discs centered at $(\xi/3, \xi/3)$, $(\xi, \xi)$, $(\xi, -\xi)$, and $(-\xi, \xi)$, respectively. The region $A_1$ corresponds to space resonances $\xi = \xi/3 + \xi/3 + \xi/3$, while $A_2$, $A_3$, $A_4$ correspond to space-time resonances $\xi = \xi + \xi -\xi$.

\subsubsection{Space resonances}
When $(\eta_1,\eta_2)\in A_1$,
we can expand $\Tb_1 / \Phi$ around $(\xi, \xi/3, \xi/3)$ as
\begin{equation}\label{T1Phi}
\frac{\Tb_1(\eta_1, \eta_2, \xi - \eta_1 - \eta_2)}{\Phi(\xi,\eta_1,\eta_2) }= \bigg(\frac12 - \frac{2 \log 2}{3 \log 3}\bigg) \xi + O\bigg(\left|\eta_1-\frac\xi3\right|^2+\left|\eta_2-\frac\xi3\right|^2\bigg).
\end{equation}

After writing
\begin{align*}
e^{i\tau \Phi(\xi, \eta_1,\eta_2)}=\frac{1}{i\Phi(\xi, \eta_1,\eta_2)} \left[\partial_\tau e^{i\tau\Phi(\xi, \eta_1,\eta_2)}\right],
\end{align*}
and integrating by parts with respect to $\tau$, we get that
\[
\begin{aligned}
&\int_1^t i\xi\iint_{\R^2}  \Tb_1(\eta_1, \eta_2, \xi - \eta_1 - \eta_2)e^{i\tau\Phi(\xi,\eta_1,\eta_2)} \hat h_{j_1}(\eta_1, \tau)\hat h_{j_2}(\eta_2, \tau) \hat h_{j_3}(\xi-\eta_1-\eta_2, \tau) \cutoff(\xi,\eta_1,\eta_2,\tau)  \diff{\eta_1} \diff{\eta_2} \diff{\tau}\\[1ex]
=& \int_1^t \xi\iint_{\R^2}  \frac{\Tb_1(\eta_1, \eta_2, \xi - \eta_1 - \eta_2)}{\Phi(\xi, \eta_1,\eta_2)} \partial_\tau \left[e^{i\tau\Phi(\xi, \eta_1,\eta_2)}\right] \hat h_{j_1}(\eta_1, \tau)\hat h_{j_2}(\eta_2, \tau) \hat h_{j_3}(\xi-\eta_1-\eta_2, \tau) \cutoff(\xi,\eta_1,\eta_2,\tau)  \diff{\eta_1} \diff{\eta_2} \diff{\tau}\\[1ex]
=&  J_1-\int_1^t J_2(\tau)+J_3(\tau) \diff{\tau},
\end{aligned}
\]
where
\begin{align*}
J_1 &=  \iint_{\R^2}  \xi \frac{\Tb_1(\eta_1, \eta_2, \xi - \eta_1 - \eta_2)}{\Phi(\xi, \eta_1,\eta_2)} \hat h_{j_1}(\eta_1, \tau)\hat h_{j_2}(\eta_2, \tau) \hat h_{j_3}(\xi-\eta_1-\eta_2, \tau)  e^{i\tau\Phi(\xi, \eta_1,\eta_2)}\cutoff(\xi,\eta_1,\eta_2,\tau) \diff{\eta_1} \diff{\eta_2}\Big|_{\tau=1}^{\tau=t},\\[1ex]
J_2(\tau) &= \xi \iint_{\R^2} \frac{\Tb_1(\eta_1, \eta_2, \xi - \eta_1 - \eta_2)}{\Phi(\xi, \eta_1,\eta_2)}  e^{i\tau\Phi(\xi, \eta_1,\eta_2)} \partial_\tau \left[\hat h_{j_1}(\eta_1, \tau)\hat h_{j_2}(\eta_2, \tau) \hat h_{j_3}(\xi-\eta_1-\eta_2, \tau)\right]\cutoff(\xi,\eta_1,\eta_2,\tau) \diff{\eta_1} \diff{\eta_2},\\[1ex]
J_3(\tau)& = \xi\iint_{\R^2} \partial_\tau\cutoff(\xi,\eta_1,\eta_2,\tau) \frac{\Tb_1(\eta_1, \eta_2, \xi - \eta_1 - \eta_2)}{\Phi(\xi, \eta_1,\eta_2)} e^{i\tau\Phi(\xi, \eta_1,\eta_2)} \hat h_{j_1}(\eta_1, \tau)\hat h_{j_2}(\eta_2, \tau) \hat h_{j_3}(\xi-\eta_1-\eta_2, \tau)  \diff{\eta_1} \diff{\eta_2} .
\end{align*}
For $J_1$, for any $\tau\geq1$, we have from \eqref{T1Phi} that
\[
\begin{aligned}
&\bigg|(|\xi|+|\xi|^{r + 4})\iint_{\R^2} \cutoff(\xi,\eta_1,\eta_2,t)  \xi \frac{\Tb_1(\eta_1, \eta_2, \xi - \eta_1 - \eta_2)}{\Phi(\xi, \eta_1,\eta_2)} \hat h_{j_1}(\eta_1, \tau)\hat h_{j_2}(\eta_2, \tau) \hat h_{j_3}(\xi-\eta_1-\eta_2, \tau)   e^{i\tau\Phi(\xi, \eta_1,\eta_2)}\diff{\eta_1} \diff{\eta_2}\bigg|\\
\lesssim~~& \bigg|(|\xi|+|\xi|^{r + 4})\iint_{\R^2} \cutoff(\xi,\eta_1,\eta_2,t)  \xi^2 \hat h_{j_1}(\eta_1, \tau)\hat h_{j_2}(\eta_2, \tau) \hat h_{j_3}(\xi-\eta_1-\eta_2, \tau) e^{i\tau\Phi(\xi, \eta_1,\eta_2)} \diff{\eta_1} \diff{\eta_2}\bigg|\\
&+ (|\xi|+|\xi|^{r + 4})\iint_{\R^2} \cutoff(\xi,\eta_1,\eta_2,t)   [\varrho(\tau)]^2 \left| \hat h_{j_1}(\eta_1, \tau)\hat h_{j_2}(\eta_2, \tau) \hat h_{j_3}(\xi-\eta_1-\eta_2, \tau) \right|\diff{\eta_1} \diff{\eta_2}\\
\lesssim~~&(\tau+1)^{2p_0+(r+3)p_1}\||\xi| \hat h_{j_1}\|_{L^\infty_\xi}\||\xi| \hat h_{j_2}\|_{L^\infty_\xi}\||\xi| \hat h_{j_3}\|_{L^\infty_\xi}\big([\varrho(\tau)]^{2}+[\varrho(\tau)]^{4}\big).
\end{aligned}
\]
Notice that in $A_1$, the number of summations of $j_1$, $j_2$, and $j_3$ is or order $\log(t+1)$, and therefore, the right-hand-side is bounded for $\tau \geq 1$ after the summation in $j_1$, $j_2$, and $j_3$.

After taking the time derivative, the term $J_2$ can be written as a sum of three terms.
\begin{align*}
&\xi \iint_{\R^2} \frac{\Tb_1(\eta_1, \eta_2, \xi - \eta_1 - \eta_2)}{\Phi(\xi, \eta_1,\eta_2)}  e^{i\tau\Phi(\xi, \eta_1,\eta_2)}  \left[\partial_\tau\hat h_{j_1}(\eta_1, \tau)\hat h_{j_2}(\eta_2, \tau) \hat h_{j_3}(\xi-\eta_1-\eta_2, \tau)\right]\cutoff(\xi,\eta_1,\eta_2,\tau) \diff{\eta_1} \diff{\eta_2},\\[1ex]
&\xi \iint_{\R^2} \frac{\Tb_1(\eta_1, \eta_2, \xi - \eta_1 - \eta_2)}{\Phi(\xi, \eta_1,\eta_2)}  e^{i\tau\Phi(\xi, \eta_1,\eta_2)}  \left[\hat h_{j_1}(\eta_1, \tau) \partial_\tau \hat h_{j_2}(\eta_2, \tau) \hat h_{j_3}(\tau, \xi-\eta_1-\eta_2)\right]\cutoff(\xi,\eta_1,\eta_2,\tau) \diff{\eta_1} \diff{\eta_2},
\end{align*}
and
\begin{align*}
&\xi \iint_{\R^2} \frac{\Tb_1(\eta_1, \eta_2, \xi - \eta_1 - \eta_2)}{\Phi(\xi, \eta_1,\eta_2)}  e^{i\tau\Phi(\xi, \eta_1,\eta_2)}  \left[\hat h_{j_1}(\eta_1, \tau)\hat h_{j_2}(\eta_2, \tau) \partial_\tau\hat h_{j_3}(\xi-\eta_1-\eta_2, \tau)\right]\cutoff(\xi,\eta_1,\eta_2,\tau) \diff{\eta_1} \diff{\eta_2}.
\end{align*}
Notice that by \eqref{eqhhat},and the bootstrap assumptions and Lemma \ref{sharp}, we have
\begin{align*}
&\|\partial_t\hat h\|_{L^\infty_\xi}\lesssim \bigg\|\xi\iint_{\R^2}\Tb_1(\eta_1,\eta_2,\xi-\eta_1-\eta_2) e^{it\Phi(\xi,\eta_1,\eta_2)}\hat h(\xi-\eta_1-\eta_2)\hat h(\eta_1)\hat h(\eta_2)\diff\eta_1\diff\eta_2\bigg\|_{L^\infty_\xi}+\left\|\widehat{\mathcal N_{\geq 5}(\vp)}\right\|_{L^\infty_\xi}\\
 \lesssim~~ &\left\|\partial_x \bigg\{\varphi^2 \log|\partial_x| \varphi_{xx}
- \varphi \log|\partial_x| (\varphi^2)_{xx} + \frac 13 \log|\partial_x| (\varphi^3)_{xx}\bigg\}\right \|_{L^1}+\left\|\mathcal N_{\geq 5}(\vp)\right\|_{L^1}\\
\lesssim~~ &\|\vp\|_{H^s}^2 \cdot \sum\limits_{j=0}^{\infty} \left(\|\vp_x\|_{W^{3,\infty}}^{2 j + 1} + \|L \vp_x\|_{W^{3, \infty}}^{2 j + 1}\right)\\
\lesssim~~ & \ve_1^3 (t + 1)^{2p_0-\frac12}.
\end{align*}
Therefore, we obtain
\[
\begin{aligned}
\left| (|\xi|+|\xi|^{r + 4}) J_2(\tau)\right|
\lesssim \sum \| h_{\ell_1}\|_{Z}\|\partial_\tau\hat h_{\ell_2}\|_{L^\infty_\xi}\| h_{\ell_3}\|_{Z}[\varrho(\tau)]^{2}
\lesssim \ve_1^3 (\tau + 1)^{p_0-\frac12}[\varrho(t)]^{2}\sum \| h_{\ell_1}\|_{Z}\| h_{\ell_3}\|_{Z},
\end{aligned}
\]
where we sum over all permutations $(\ell_1, \ell_2, \ell_3)$ of $(j_1, j_2,j_3)$ in the space resonance region $A_1$. Again, we notice that the number of summations is of order $\log(\tau + 1)$, and the resulting sum is integrable for $\tau \in (1, \infty)$.

As for the term $J_3$, by the definition of the cutoff function, we have
\[
\left|\partial_\tau\left[\psi_{\leq \log_2(\varrho(\tau))}(|\eta_1|-|\xi-\eta_1-\eta_2|) \cdot \psi_{\leq \log_2(\varrho(\tau))}(|\eta_2|-|\xi-\eta_1-\eta_2|)\right] \right| \lesssim \varrho'_1(t)(\varrho(t))^{-1}\lesssim \frac{1}{t+1}.
\]
The area of its support is of the order of $[\varrho(t)]^2$.
Then, using \eqref{T1Phi}, we get that
\begin{align*}
\left|(|\xi|+|\xi|^{r + 4})J_3(\tau)\right|\lesssim (\tau+1)^{2p_0+(r+3)p_1-1} [\varrho(\tau)]^{2} \sum \|\xi\hat h_{\ell_1}\|_{L^\infty_\xi}\|\xi\hat h_{\ell_2}\|_{L^\infty_\xi}\|\xi\hat h_{\ell_3}\|_{L^\infty_\xi},
\end{align*}
where the summation is taken over permutations $(\ell_1, \ell_2, \ell_3)$ of $(j_1, j_2,j_3)$, so the sum converges and is integrable for $\tau \in (1, \infty)$.

\subsubsection{Space-time resonances}
We now use modified scattering to consider the term
\begin{align*}
\frac16 \iint_{A_2\bigcup A_3\bigcup A_4}  &i \xi \cutoff(\xi,\eta_1,\eta_2,t) \Tb_1(\eta_1, \eta_2, \xi - \eta_1 - \eta_2) \hat\vp_{j_1}(\eta_1)\hat\vp_{j_2}(\eta_2) \hat\vp_{j_3}(\xi-\eta_1-\eta_2) \diff{\eta_1} \diff{\eta_2}\\
& -i\xi \left[\beta_1(t)\Tb_1(\xi, \xi, -\xi)+\beta_2(t)\Tb_1(\xi, -\xi, \xi)+\beta_3(t)\Tb_1(-\xi,\xi,\xi)\right]|\hat \vp(\tau,\xi)|^2\hat \vp(\tau,\xi).
\end{align*}

For $A_2$, we take
\[
\beta_1(t)=\frac16 \iint_{A_2}\cutoff(\xi,\eta_1,\eta_2,t)\diff\eta_1\diff\eta_2.
\]
Therefore, using a Taylor expansion and \eqref{partialT}, we obtain
\begin{align*}
&\bigg|(|\xi|+|\xi|^{r+4})\frac16 i\xi \iint_{A_2} \cutoff(\xi,\eta_1,\eta_2,t)\\
&\qquad\Big[\Tb_1(\eta_1, \eta_2, \xi - \eta_1 - \eta_2) \hat\vp_{j_1}(\eta_1)\hat\vp_{j_2}(\eta_2) \hat\vp_{j_3}(\xi-\eta_1-\eta_2) - \Tb_1(\xi, \xi, -\xi)|\hat \vp(\tau,\xi)|^2\hat \vp(\tau,\xi) \Big] \diff{\eta_1} \diff{\eta_2}\bigg|\\
\lesssim~& (|\xi|+|\xi|^{r+4}) (i\xi) \iint_{A_2} \left|\partial_{\eta_1}\left[\Tb_1(\eta_1, \eta_2, \xi - \eta_1 - \eta_2) \hat\vp_{j_1}(\eta_1)\hat\vp_{j_2}(\eta_2) \hat\vp_{j_3}(\xi-\eta_1-\eta_2)\right] \bigg|_{\eta_1 = \eta_1'} (\xi-\eta_1)\right|\\
& \hspace {.75in}+\left|\partial_{\eta_2}\left[\Tb_1(\eta_1, \eta_2, \xi - \eta_1 - \eta_2) \hat\vp_{j_1}(\eta_1)\hat\vp_{j_2}(\eta_2) \hat\vp_{j_3}(\xi-\eta_1-\eta_2)\right] \bigg|_{\eta_2 = \eta_2'} (\xi-\eta_2)\right| \diff{\eta_1} \diff{\eta_2}\\
\lesssim~&(t+1)^{(r+3)p_1}  \|\xi\hat\vp_{j_1}\|_{L^\infty_\xi}\|\xi\hat\vp_{j_2}\|_{L^\infty_\xi}\|\xi\hat\vp_{j_3}\|_{L^\infty_\xi} [\varrho(t)]^{3}+ \sum\|\xi\hat\vp_{\ell_1}\|_{L^\infty_\xi}\|\xi\hat\vp_{\ell_2}\|_{L^\infty_\xi}\|\S\vp_{\ell_3}\|_{H^r}[\varrho(t)]^{5/2},
\end{align*}
where $\eta_1'$ (or $\eta_2'$) in the first inequality is some number between $\xi$ and $\eta_1$ (or $\eta_2$), and the summation in the second inequality is over permutations $(\ell_1, \ell_2, \ell_3)$ of $(j_1, j_2,j_3)$. The estimates for $A_3$ and $A_4$ follow by a similar argument.

Taking a summation over $j_1, j_2, j_3$ and using the estimates in the above subsections together with the time-decay of $\varrho(t)$ in \eqref{rho}, we conclude that
\[
\int_0^{\infty} \|(|\xi|+|\xi|^{r+4})U_1\|_{L^\infty_\xi} \diff t\lesssim \ve_0.
\]

\subsection{Higher-degree terms}\label{higherorder}
In this subsection, we prove that
\[
\left\|(|\xi|+|\xi|^{r+4})\widehat{\mathcal{N}_{\geq5}(\vp)}\right\|_{L^\infty_\xi}
\]
is integrable in time. We begin by proving an estimate for the symbol $\Tb_n$. We have
\begin{align*}
&\F^{-1}\left[  \Tb_n(\eta_1,\eta_2,\dotsc, \eta_{2n+1}) \psi_{j_1}(\eta_1)\psi_{j_2}(\eta_2)\dotsm\psi_{j_{2n+1}}(\eta_{2n+1})\right]\\
=~&\iiint_{\R^{2n+1}}e^{i(y_1\eta_1+y_2\eta_2+\dotsb+y_{2n+1}\eta_{2n+1})}   \bigg[\int_{\R}\frac{\prod_{j=1}^{2n+1}(1-e^{i\eta_j\zeta})}{|\zeta|^{2n+1}}\diff \zeta\bigg]\psi_{j_1}(\eta_1)\psi_{j_2}(\eta_2) \dotsm \psi_{j_{2n+1}}(\eta_{2n+1}) \diff{\etab_n}\\
=~&\iiint_{\R^{2n+1}}  \bigg[\int_{\R}\frac{ (e^{iy_1\eta_1}-e^{i\eta_1(\zeta+y_1)})\dotsm(e^{iy_{2n+1}\eta_{2n+1}}-e^{i\eta_{2n+1}(\zeta+y_{2n+1})})}{ |\zeta|^{2n+1}}\diff \zeta\bigg]\psi_{j_1}(\eta_1)\dotsm\psi_{j_{2n+1}}(\eta_{2n+1}) \diff{\etab_n}\\
=~& \int_{\R}\frac{1 }{|\zeta|^{2n+1}} \left[\F^{-1}[\psi_{j_1}](y_1)-\F^{-1}[\psi_{j_1}](\zeta+y_1)\right]\dotsm \left[\F^{-1}[\psi_{j_{2n+1}}](y_{2n+1})-\F^{-1}[\psi_{j_{2n+1}}](\zeta+y_{2n+1})\right] \diff \zeta,
\end{align*}
and it follows that
\begin{align*}
&\left\|\F^{-1}\left[  \Tb_n(\eta_1,\eta_2,\dotsc, \eta_{2n+1}) \psi_{j_1}(\eta_1)\psi_{j_2}(\eta_2)\dotsm\psi_{j_{2n+1}}(\eta_{2n+1})\right]\right\|_{L^1}\\
&\qquad\lesssim \int_{\R} 2^{j_1+\dotsb+j_{2n+1}}\frac1{|\zeta|^{2n+1}}\min\{2^{-j_1},|\zeta|\}\min\{2^{-j_2},|\zeta|\}
\dotsm\min\{2^{-j_{2n+1}},|\zeta|\}\diff\zeta.\\
\end{align*}
Let $\ell_1, \ell_2, \dotsc, \ell_{2n+1}$ be a permutation of $j_1, j_2, \dotsc, j_{2n+1}$ satisfying $2^{-\ell_1}\leq 2^{-\ell_2}\leq \dotsb \leq 2^{-\ell_{2n+1}}$. Then
\begin{align*}
&\left\|\F^{-1}\left[  \Tb_n(\eta_1,\eta_2,\dotsc, \eta_{2n+1}) \psi_{j_1}(\eta_1)\psi_{j_2}(\eta_2)\dotsm\psi_{j_{2n+1}}(\eta_{2n+1})\right]\right\|_{L^1}\\
&\qquad \lesssim \int_{|\zeta|> 2^{-\ell_{2n+1}}}\frac1{|\zeta|^{2n+1}}\diff\zeta+\int_{2^{-\ell_{2n}}<|\zeta|<2^{-\ell_{2n+1}}}\frac{2^{\ell_1}}{|\zeta|^{2n}}\diff\zeta\\
& \qquad +\dotsb+\int_{2^{-\ell_1}<|\zeta|<2^{-\ell_{2}}}\frac{2^{\ell_1+\dotsb+\ell_{2n}}}{|\zeta|}\diff \zeta+\int_{|\zeta|<2^{-\ell_1}}2^{\ell_1+\dotsb+\ell_{2n+1}}\diff \zeta\\
&\qquad \lesssim 2^{\ell_2+\dotsb+\ell_{2n+1}}.
\end{align*}

Therefore, by Lemma \ref{multilinear}, we have
\begin{align*}
\left\|(|\xi|+|\xi|^{r+4})\widehat{\mathcal{N}_{\geq5}(\vp)}\right\|_{L^\infty_\xi}\lesssim& ~(t+1)^{(r+4)p_1}\|\mathcal{N}_{\geq 5}(\vp)\|_{L^1} \lesssim \|\vp\|_{H^1}^2\sum\limits_{n=2}^\infty \left(\|\vp_x\|_{L^\infty}^{2n-1} + \|L \vp_x\|_{L^\infty}^{2n-1}\right).
\end{align*}
Using the dispersive estimate Lemma~\ref{sharp}, we see that the right-hand-side is integrable in $t$, which leads to
\[
\int_0^\infty \left\| (|\xi|+|\xi|^{r+3})\widehat{\mathcal N_{\geq5}(\vp)} \right\|_{L_\xi^\infty}\diff t\lesssim \ve_0.
\]
This completes the proof of Theorem~\ref{global}.

\appendix
\section{Alternative formulation of the SQG front equation}
We first prove an algebraic identity that will be used in deriving \eqref{expd-sqg}.
\begin{lemma}
\label{p-cancel}
Let $N \ge 2$ be an integer. Then for any integer $1\le p \le N - 1$ and any $\eta_j \in \R$, $j = 1, 2, \dotsc, N$
\begin{equation}
\label{eta-cancel}
\sum_{\ell = 1}^{N} \sum_{1 \leq m_1 < m_2 < \dotsb < m_\ell \leq N} (-1)^\ell (\eta_{m_1} + \eta_{m_2} + \dotsb + \eta_{m_\ell})^p = 0.
\end{equation}
\end{lemma}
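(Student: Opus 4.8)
The plan is to deduce \eqref{eta-cancel} from a generating-function identity by reading it off as the vanishing of a low-order Taylor coefficient. First I would introduce the entire function of one real variable $s$,
\[
G(s) = \prod_{j=1}^{N} \bigl(1 - e^{\eta_j s}\bigr),
\]
and expand the product by the elementary expansion $\prod_{j=1}^N(1-x_j)=\sum_{\ell=0}^N \sum_{1\le m_1<\cdots<m_\ell\le N}(-1)^\ell x_{m_1}\cdots x_{m_\ell}$, taken with $x_j = e^{\eta_j s}$. This yields
\[
G(s) = \sum_{\ell=0}^{N} \sum_{1\le m_1<\cdots<m_\ell\le N} (-1)^\ell\, e^{(\eta_{m_1}+\cdots+\eta_{m_\ell})s}.
\]

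Next I would compare two computations of the Taylor expansion of $G$ at $s=0$. On the one hand, every factor $1-e^{\eta_j s}$ vanishes at $s=0$, so $G(s)$ vanishes to order at least $N$; in particular all of its Taylor coefficients of order $0,1,\dots,N-1$ are zero. On the other hand, expanding each exponential on the right-hand side (there are only $2^N$ summands, all entire, so term-by-term expansion and comparison of coefficients is legitimate), the coefficient of $s^p$ equals
\[
\frac{1}{p!}\sum_{\ell=0}^{N} \sum_{1\le m_1<\cdots<m_\ell\le N} (-1)^\ell (\eta_{m_1}+\cdots+\eta_{m_\ell})^p .
\]
For $p\ge 1$ the $\ell=0$ summand (the empty subset, for which the sum of $\eta$'s is $0$) contributes $0^p=0$, so this coefficient is exactly $\tfrac1{p!}$ times the left-hand side of \eqref{eta-cancel}. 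Equating with the first computation shows that it vanishes for all $p$ with $0\le p\le N-1$, which for $1\le p\le N-1$ is precisely the asserted identity.

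Essentially everything here is routine; the only point requiring care is the treatment of the $\ell=0$ term, which is what makes the statement fail at $p=0$ and hence explains why the hypothesis $p\le N-1$ must be paired with $p\ge 1$. I do not expect any genuine obstacle. If one preferred to avoid analytic functions entirely, the same argument runs verbatim in the ring of formal power series in $s$ over $\mathbb{R}[\eta_1,\dots,\eta_N]$; alternatively the identity can be proved by induction on $N$ using the difference operator $f(s)\mapsto (1-e^{\eta_N s})f(s)$, but the generating-function proof is the most direct.
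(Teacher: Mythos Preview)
Your proof is correct and takes a genuinely different route from the paper's. The paper expands $(\eta_{m_1}+\cdots+\eta_{m_\ell})^p$ by the multinomial theorem, fixes a monomial $\eta_1^{\alpha_1}\cdots\eta_N^{\alpha_N}$ with $M$ nonzero exponents, and observes that the subsets $\{m_1,\dots,m_\ell\}$ contributing to this monomial are exactly those containing the $M$ indices with $\alpha_i>0$; summing $(-1)^\ell$ over such subsets gives $(-1)^M(1-1)^{N-M}=0$ because $M\le p\le N-1$. Your generating-function argument is more conceptual: the vanishing order of $G(s)=\prod_{j=1}^N(1-e^{\eta_j s})$ at $s=0$ immediately explains both the identity and the sharpness of the range $p\le N-1$, and it requires no bookkeeping of multinomial coefficients. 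The paper's approach has the minor advantage of being purely algebraic and self-contained (no Taylor expansions or formal power series needed), but your proof is shorter and makes the mechanism transparent. Your remark about the $\ell=0$ term is also useful, since it pinpoints why the hypothesis $p\ge 1$ is needed.
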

\begin{proof}
A general term in the expansion of left-hand-side of \eqref{eta-cancel} is proportional to
\begin{equation}
\label{etageneralterm}
\eta_1^{\alpha_1} \eta_2^{\alpha_2} \dotsm \eta_N^{\alpha_N},
\end{equation}
where $\alpha_1, \alpha_2, \dotsc, \alpha_N$ are nonnegative integers such that
$\alpha_1 + \alpha_2 + \dotsb +\alpha_N = p$.
It suffices to show that the coefficients of the monomials \eqref{etageneralterm} are zero. Let $1\le M\le N-1$ denote the number of nonzero terms in the list $(\alpha_1, \alpha_2, \dotsc, \alpha_N)$. Using the multinomial theorem, we see that the coefficient of \eqref{etageneralterm} is
\[
\begin{pmatrix}p\\ \alpha_1, \dotsc, \alpha_N\end{pmatrix} \cdot \sum_{j = 0}^{N - M} (-1)^{M + j} \begin{pmatrix}N - M\\ j\end{pmatrix} = \begin{pmatrix}p\\ \alpha_1, \dotsc, \alpha_N\end{pmatrix} \cdot (-1)^M (1 - 1)^{N - M} = 0.
\]
\end{proof}

To compute $\Tb_n(\etab_n)$ in \eqref{Tnintdef}, we first expand the product
\[\begin{aligned}
\Re~\prod_{j = 1}^{2n+1} (1 - e^{ i \eta_j \zeta}) &= 1 + \sum_{\ell = 1}^{2n + 1} \sum_{1 \leq m_1 < m_2 < \dotsb < m_\ell \leq 2n + 1} (-1)^\ell \cos\big((\eta_{m_1} + \eta_{m_2} + \dotsb + \eta_{m_\ell})\zeta\big)\\
&= \sum_{\ell = 1}^{2n + 1} \sum_{1 \leq m_1 < m_2 < \dotsb < m_\ell \leq 2n + 1} (-1)^{\ell + 1} \left[1 - \cos\big((\eta_{m_1} + \eta_{m_2} + \dotsb + \eta_{m_\ell})\zeta\big)\right].
\end{aligned}\]

We replace the integral over $\R$ in \eqref{Tnintdef} by an integral over $\R\setminus(-\epsilon,\epsilon)$, where $\epsilon \ll 1$, and decompose the expression for $\Tb_n$ into a sum of terms of the form
\[\begin{aligned}
\int_{\epsilon <|\zeta| <\infty}\frac{1 - \cos(\eta\zeta)}{|\zeta|^{2n + 1}} \diff{\zeta} &= \int_{\epsilon < |\zeta| \leq 1/|\eta|} \frac{1 + \sum_{j = 1}^n \frac{(-1)^j (\eta\zeta)^{2j}}{(2j)!} - \cos(\eta\zeta)}{|\zeta|^{2n+1}} \diff{\zeta} + \int_{|\zeta| > 1/|\eta|} \frac{1 - \cos(\eta\zeta)}{|\zeta|^{2n + 1}} \diff{\zeta}\\
& \qquad - \sum_{j = 1}^n \frac{(-1)^{j} \eta^{2j}}{(2j)!} \int_{\epsilon < |\zeta| \leq 1/|\eta|} \frac{1}{|\zeta|^{2n-2j+1}} \diff{\zeta}\\
&= C_{n,1} \eta^{2n} - \sum_{j = 1}^n \frac{(-1)^{j} \eta^{2j}}{(2j)!} \int_{\epsilon < |\zeta| \leq 1/|\eta|} \frac{1}{|\zeta|^{2n-2j+1}} \diff{\zeta} + o(1),
\end{aligned}\]
where
\[
C_{n, 1} = \int_{|\theta| \leq 1} \frac{1 + \sum_{j = 1}^n \frac{(-1)^j (\theta)^{2j}}{(2j)!} - \cos(\theta)}{|\theta|^{2n+1}} \diff{\theta} + \int_{|\theta| > 1} \frac{1 - \cos(\theta)}{|\theta|^{2n + 1}} \diff{\theta}
\]
is some constant that depends only on $n$.

We have
\[\begin{aligned}
\sum_{j = 1}^n \frac{(-1)^{j} \eta^{2j}}{(2j)!} \int_{\epsilon < |\zeta| \leq 1/|\eta|} \frac{1}{|\zeta|^{2n-2j+1}} \diff{\zeta}
&= C_{n, 2}^\epsilon \eta^{2n} + \sum_{j = 1}^{n - 1} C_{n, 3}^{j, \epsilon} \eta^{2j} + C_{n, 4} \eta^{2n} \log|\eta|,
\end{aligned}\]
where
\[\begin{aligned}
C_{n, 2}^\epsilon &= \sum_{j = 1}^{n - 1} \frac{(-1)^{j + 1}}{(n - j) (2j)!} + 2 \frac{(-1)^{n + 1} \log\epsilon}{(2n)!},\qquad
C_{n, 3}^{j, \epsilon} = \frac{(-1)^j \epsilon^{2j - 2n}}{(n - j) (2j)!},\qquad
C_{n, 4} = 2 \frac{(-1)^{n+1}}{(2n)!}.
\end{aligned}\]
Thus, we conclude that
\[
\int_{\epsilon < |\zeta| \le 1/|\eta|} \frac{1 - \cos(\eta\zeta)}{|\zeta|^{2n + 1}} \diff{\zeta} = \big(C_{n, 1} - C_{n, 2}^\epsilon\big) \eta^{2n} - \sum_{j = 1}^{n - 1} C_{n, 3}^{j, \epsilon} \eta^{2j} - C_{n, 4} \eta^{2n} \log|\eta|.
\]
We use these results in the expression for  $\Tb_n$ and take the limit as $\epsilon \to 0^+$. The singularity at $\epsilon =0$ does not enter into the final result because of the cancelation in Lemma~\ref{p-cancel}, and we find that
\begin{equation}
\Tb_n(\etab_n) = 2 \frac{(-1)^{n+1}}{(2n)!} \sum_{\ell = 1}^{2n + 1} \sum_{1 \leq m_1 < m_2 < \dotsb < m_\ell \leq 2n + 1} (-1)^\ell (\eta_{m_1} + \dotsb + \eta_{m_\ell})^{2n} \log\left|\eta_{m_1} + \eta_{m_2} + \dotsb + \eta_{m_\ell}\right|.
\label{Tnexp}
\end{equation}
It follows that
\[
f_n = 2 \frac{(-1)^n}{(2n)!} \sum_{\ell = 1}^{2n + 1} \begin{pmatrix}2n + 1\\ \ell\end{pmatrix} (-1)^\ell \varphi^{2n - \ell + 1} \partial^{2n} \log|\partial| \big(\varphi^l\big).
\]
Therefore, we conclude that
\[
\begin{aligned}
& \int_\R \bigg[\frac{\varphi_x(x, t) - \varphi_x(x + \zeta, t)}{|\zeta|} - \frac{\varphi_x(x, t) - \varphi_x(x + \zeta, t)}{\sqrt{\zeta^2 + (\varphi(x, t) - \varphi(x + \zeta, t))^2}}\bigg] \diff{\zeta}\\
= ~~& - \sum_{n = 1}^\infty \frac{2 c_n (-1)^n}{\Gamma(2n + 2)} \px \bigg\{ \sum_{\ell = 1}^{2n + 1} \begin{pmatrix}2n + 1\\ \ell \end{pmatrix} (-1)^\ell \varphi^{2n - \ell + 1}(x, t) \px^{2n} \log|\px| \big(\varphi^l(x, t)\big)\bigg\}\\
= ~~& \sum_{n = 1}^\infty \sum_{\ell = 1}^{2n + 1} (-1)^{\ell + 1} d_{n, \ell} \px \Big\{\varphi^{2n - \ell + 1}(x, t) \px^{2n} \log|\px| \big(\varphi^l(x, t)\big)\Big\},
\end{aligned}
\]
where
\begin{equation}
\label{ddef}
d_{n, \ell} = \frac{2 \sqrt{\pi}}{\big|\Gamma\left(\frac 12 - n\right)\big| \Gamma(\ell + 1) \Gamma(2n + 2 - \ell) \Gamma(n + 1)} > 0.
\end{equation}
Using this expansion in \eqref{full-sqg}, we get \eqref{expd-sqg}.

\section{Para-differential calculus}
\label{sec:B}

In this appendix, we use the Weyl calculus \cite{lerner} to prove some estimates for Weyl paraproducts.

\subsection{Weyl operators}
The Weyl quantization of a symbol $a \colon \R\times\R\to \C$ is the operator $a^w$  defined by
\begin{align*}
(a^wf)(x)&=\frac1{2\pi}\iint_{\R^{2}} e^{i(x-y)\xi} a\bigg(\frac{x+y}2, \xi\bigg) f(y)\diff y\diff \xi
=\int_{\R} \F_2^{-1}a\bigg(\frac{x+y}2, x-y\bigg) f(y) \diff y,
\end{align*}
where $\F_i a$ denotes the Fourier transform of $a(x_1,x_2)$ with respect to with the $i$-th variable ($i = 1, 2$).
The Fourier transform of $a^w f$ can be written as
\begin{equation}\label{weyl2}
 \begin{aligned}
\F(a^w f)(\xi)&=\frac1{2\pi}\iiint_{\R^{3}} e^{i(x-y)\eta-ix\xi} a\bigg(\frac{x+y}2, \eta\bigg) f(y)\diff y\diff \eta \diff x
=\int_{\R} \F_1^{-1} a \bigg(\xi-\eta, \frac{\xi+\eta}2\bigg)\hat f(\eta)\diff \eta.
\end{aligned}\end{equation}

For $m\in \R$, we have the symbol class
\begin{align*}
S_{1,0}^m &=\left\{a(x,\xi)\in C^{\infty}(\R\times \R) \biggm| \sup_{\xi\in\R}\|\partial_\xi^\alpha\partial_x^\beta a(\cdot,\xi)\|_{L^\infty}\leq C_{\alpha\beta}(1+|\xi|)^{m-|\alpha|},\ \forall \alpha, \beta \in \Nz\right\}.
\end{align*}
For integers $r_1, r_2\geq 0$, we define a symbol norm by
\[
M^m_{r_1,r_2}(a) = \max\limits_{0\le \alpha \leq r_2}\sup\limits_{\xi\in\R}\left\|(1+|\xi|)^{\alpha - m}\partial_\xi^\alpha a(\cdot, \xi)\right\|_{W^{r_1,\infty}},
\]
and introduce a class of symbols with finite regularity
\begin{align*}
\Gamma_{r_1, r_2}^m &= \left\{a \colon \R \times \R \to \C \bigm| M^m_{r_1, r_2} (a) < \infty\right\}.
\end{align*}
 We note that if $\Mc_{(r_1,r_2)}$ is the symbol class defined in \eqref{defMc}, then
 \[
 \left\|(1+|\xi|)^{-m} a(x,\xi)\right\|_{\Mc_{(r_1,r_2)}}\approx M^m_{r_1, r_2} (a).
 \]
 In particular, $\Mc_{(r_1,r_2)} = \Gamma_{r_1, r_2}^0$.

\subsection{Para-differential operators}
Recall from Section~\ref{sec-weyl} that $\chi \colon \R \to \R$ is a smooth function supported in the interval $\{\xi\in \R \mid |\xi|\leq 1/10\}$
and equal to $1$  on $\{\xi\in \R \mid |\xi|\leq 3/40\}$.
If $f \colon \R\to\C$ and $a \colon \R \times \R \to \C$ is a symbol, then the Weyl paraproduct $T_a f$ in \eqref{weyldef} is defined by
\[
\F \left[T_a f\right](\xi)= \int_{\R} \chi\bigg(\frac{|\xi-\eta|^2}{1+|\xi+\eta|^2}\bigg)\tilde{a}\Big(\xi-\eta, \frac{\xi + \eta}{2}\Big)\hat f(\eta)\diff\eta.
\]
Introducing the notation
\begin{align*}
\sigma_a(\cdot, \zeta_2)=\F_{1}\bigg[\chi\bigg(\frac{|\zeta_1|^2}{1+4|\zeta_2|^2}\bigg)\tilde{a}(\zeta_1,\zeta_2)\bigg],
\end{align*}
we can also write
\[
\F [T_a f](\xi)= \int_{\R} \F_1^{-1} \sigma_a\bigg(\xi-\eta, \frac{\xi+\eta}2\bigg)\hat f(\eta)\diff\eta.
\]
Comparing this result with \eqref{weyl2}, we see that $T_a = \sigma_a^w$.

\begin{lemma}
\label{lem:Gamma}
If $a\in \Gamma_{r_1,r_2}^m$, then $\sigma_a\in\Gamma_{r_1,r_2}^m$ and $M^m_{r_1,r_2}(\sigma_a)\lesssim M^m_{r_1,r_2}(a)$.
\end{lemma}
\begin{proof}
To prove that $\sigma_a\in \Gamma_{r_1,r_2}^m$, we write
\[
\partial_{\zeta_2}^\alpha\partial_x^\beta \sigma_a(x,\zeta_2) = \sum\limits_{i_1 + i_2 = \alpha} c_{i_1, i_2, \alpha}\F_{\zeta_1}\bigg[\partial_{\zeta_2}^{i_1}\chi\bigg(\frac{|\zeta_1|^2}{1+4|\zeta_2|^2}\bigg)\partial_{\zeta_2}^{i_2} \widetilde{\partial_x^\beta a}(\zeta_1,\zeta_2)\bigg],
\]
where the $c_{i_1, i_2, \alpha}$ are multinomial coefficients.

For each term, by Young's inequaility,
\begin{align*}
&\bigg| (1+|\zeta_2|)^{\alpha}\F_{\zeta_1}\bigg[\partial_{\zeta_2}^{i_1}\chi\bigg(\frac{|\zeta_1|^2}{1+4|\zeta_2|^2}\bigg)\partial_{\zeta_2}^{i_2} \widetilde{\partial_x^\beta a}(\zeta_1,\zeta_2)\bigg]\bigg|\\
=~&\bigg|\F_{\zeta_1}\bigg[(1+ |\zeta_2|)^{i_1}\partial_{\zeta_2}^{i_1}\chi\bigg(\frac{|\zeta_1|^2}{1+4|\zeta_2|^2}\bigg)\bigg]\ast \left[(1+|\zeta_2|)^{i_2}\partial_{\zeta_2}^{i_2} \partial_x^\beta a(-x,\zeta_2)\right]\bigg|\\
\lesssim~ & \bigg\|\F_{\zeta_1}\bigg[(1+|\zeta_2|)^{i_1}\partial_{\zeta_2}^{i_1}\chi\bigg(\frac{|\zeta_1|^2}{1+4|\zeta_2|^2}\bigg)\bigg](x,\zeta_2)\bigg\|_{L^1_x} \left\|(1+ |\zeta_2|)^{i_2}\partial_{\zeta_2}^{i_2} \partial_x^\beta a(-x,\zeta_2)\right\|_{L_x^\infty}.
\end{align*}

Using the Fa\`{a} di Bruno's formula, a general term of $(1+|\zeta_2|)^{i_1}\partial_{\zeta_2}^{i_1}\chi\left(|\zeta_1|^2 / (1+4|\zeta_2|^2)\right)$ is a linear combination of the terms of the form
\[
(1 + |\zeta_2|)^{i_1} \chi^{(m_1 + \dotsb + m_{i_1})}\bigg(\frac{|\zeta_1|^2}{1 + 4 |\zeta_2|^2}\bigg) \prod_{\ell = 1}^{i_1} \bigg[\partial_{\zeta_2}^\ell \bigg(\frac{|\zeta_1|^2}{1 + 4 |\zeta_2|^2}\bigg)\bigg]^{m_\ell},
\]
where $m_\ell \in \N_0$ satisfies $\sum_{\ell = 1}^{i_1} \ell m_\ell = i_1$.

Bernstein's inequality implies that
\begin{align*}
\bigg\|\F_{\zeta_1}\bigg[(1+|\zeta_2|)^{i_1}\partial_{\zeta_2}^{i_1}\chi\bigg(\frac{|\zeta_1|^2}{1+4|\zeta_2|^2}\bigg)\bigg]
\bigg\|_{L^1_x}
&\lesssim \bigg\|\int_{\R} e^{-ix\zeta_1} \bigg((1+4|\zeta_2|^2)^{\frac{i_1}{2}}\partial_{\zeta_2}^{i_1}\chi\bigg(\frac{|\zeta_1|^2}{1+4|\zeta_2|^2}\bigg)\bigg)\diff \zeta_1 \bigg\|_{L^1_x} \lesssim 1,
\end{align*}
since the middle term in this inequality is supported on the set
$\{(\zeta_1, \zeta_2) \mid |\zeta_1|\lesssim \sqrt{1+4|\zeta_2|^2}\}$.
Therefore, we have that
\begin{align*}
&\sum\limits_{\alpha\leq r_2, \beta\leq r_1}\left\|(1+|\zeta_2|)^{\alpha}\partial_{\zeta_2}^\alpha\partial_x^\beta \sigma_a(x,\zeta_2)\right \|_{L^\infty_x}\lesssim \sum\limits_{\alpha\leq r_2, \beta\leq r_1}\sum\limits_{i \leq \alpha} \left\|(1+|\zeta_2|)^{i}\partial_{\zeta_2}^{i} \partial_x^\beta a(-x,\zeta_2)\right\|_{L_x^\infty}\lesssim (1+|\zeta_2|)^{m},
\end{align*}
so $M^m_{r_1,r_2}(\sigma_a)\lesssim M^m_{r_1,r_2}(a)$, which completes the proof.
\end{proof}

\subsection{$H^s$ estimates}
The next theorem follows from Theorem 1.2 in Boulkhemair \cite{Bou99}.

\begin{theorem}
\label{L2PDO}
Let $m\in\R$.
If $a\in \Gamma_{1,1}^m $, then the Weyl operator $a^w \colon H^{s}(\R) \to H^{s-m}(\R)$ with symbol $a$ is bounded  and its operator-norm is bounded by $M_{1,1}^m(a)$.
\end{theorem}

Using Lemma~\ref{lem:Gamma} and the fact that $T_a = \sigma_a^w$, we then get the following estimate for Weyl paraproducts.
\begin{theorem}\label{L2cont}
If $a\in \Gamma_{1,1}^m$, then the Weyl paraproduct operator $T_a \colon H^s(\R) \to H^{s-m}(\R)$ is bounded for all $m,s\in\R$, and
\[
\|T_a f\|_{H^{s-m}}\lesssim M_{1,1}^m(a) \|f\|_{H^s}.
\]
\end{theorem}

In particular, setting $m=0$ and using the fact that $M_{1,1}^0(a) \approx \|a\|_{\Mc_{(1,1)}}$, we get Lemma~\ref{lem:taest}.

\subsection{$L^\infty$-$L^2$ estimates}
We also need some estimates in which we bound $\|T_af\|_{L^2}$ by $\|f\|_{L^\infty}$.
\begin{theorem}\label{Linftybound}
Let $p(\xi)=|\xi|^k$, $k\geq 0$ or $p(\xi)=|\xi|^k\log |\xi|$, $k\geq 1$.
Assume $f\in L^\infty(\R)$ with $p(\partial_x)\partial_x f\in L^\infty(\R)$, and $a(x,\xi) $ is a function such that $\|a\|_{\mathcal L_1^2}<\infty$,
where
\[
\|a\|_{\mathcal L^2_1}:=\sup_{\xi} (\|a(\cdot,\xi)\|_{L^2}+\|\partial_\xi a(\cdot,\xi)\|_{L^2}).
\]
Then we have
\[
\|p(\partial_x) T_a f\|_{L^2}\lesssim (\|f\|_{L^\infty}+\|p(\partial_x)\partial_xf\|_{L^\infty})\|a\|_{\mathcal L^2_1}.
\]
\end{theorem}

\begin{proof}
Recall that
\begin{align*}
T_a f(x)=\sigma_a^w f(x)=\int_{\R}\F_2^{-1}\sigma_a\bigg(\frac{x+y}{2}, x-y\bigg)f(y)\diff y=\int_{\R}\chi\bigg(\frac{|\xi-\eta|^2}{1+|\xi+\eta|^2}\bigg)\tilde{a}\Big(\xi-\eta, \frac{\xi + \eta}{2}\Big)\hat f(\eta)\diff\eta.
\end{align*}
We split $T_af$ into a low-frequency part
\begin{align*}
\int_{\R}\F_2^{-1}\sigma_a\bigg(\frac{x+y}{2}, x-y\bigg)[\iota(i\partial_y)f(y)]\diff y=\int_{\R}\iota(\eta) \chi\bigg(\frac{|\xi-\eta|^2}{1+|\xi+\eta|^2}\bigg)\tilde{a}\Big(\xi-\eta, \frac{\xi + \eta}{2}\Big)\hat f(\eta)\diff\eta,
\end{align*}
and a high-frequency part
\begin{align*}
\int_{\R}\F_2^{-1}\sigma_a\bigg(\frac{x+y}{2}, x-y\bigg) \left[(1-\iota(i\partial_y))f(y)\right] \diff y=\int_{\R}(1-\iota(\eta)) \chi\bigg(\frac{|\xi-\eta|^2}{1+|\xi+\eta|^2}\bigg)\tilde{a}\Big(\xi-\eta, \frac{\xi + \eta}{2}\Big)\hat f(\eta)\diff\eta.
\end{align*}
Here, the cutoff function $\iota$ is the same as the one defined in the proof of Lemma~\ref{Commu}.

The integrand in the low-frequency part is supported in $|\xi|<6$, $|\eta| < 2$. Thus, $|\xi+\eta|<10$ and $|\xi-\eta|<10$ on its support, so we can put a cut-off function $\iota(\frac{\xi+\eta}5)\iota(\frac{\xi-\eta}5)$ into the integral without changing its value:
\begin{align*}
&\int_{\R}\F_2^{-1}\sigma_a\bigg(\frac{x+y}{2}, x-y\bigg)\left[\iota(i\partial_y)f(y)\right]\diff y
=\int_{\R}\iota(\eta) \chi\bigg(\frac{|\xi-\eta|^2}{1+|\xi+\eta|^2}\bigg)\iota\bigg(\frac{\xi+\eta}5\bigg) \iota\bigg(\frac{\xi-\eta}5\bigg) \tilde{a}\Big(\xi-\eta, \frac{\xi + \eta}{2}\Big)\hat f(\eta)\diff\eta.
\end{align*}
Therefore, defining $b(x, \xi)=\iota({i}/5\partial_x) \iota(2\xi/5) a(x, \xi)$, we have
\begin{align*}
\int_{\R}\F_2^{-1}\sigma_a\bigg(\frac{x+y}{2}, x-y\bigg)[\iota(i\partial_y)f(y)]\diff y &= \int_{\R}\iota(\eta) \chi\bigg(\frac{|\xi-\eta|^2}{1+|\xi+\eta|^2}\bigg)\tilde{b}\Big(\xi-\eta, \frac{\xi + \eta}{2}\Big)\hat f(\eta)\diff\eta\\*
& =\int_{\R}\F_2^{-1}\sigma_b\bigg(\frac{x+y}{2}, x-y\bigg)[\iota(i\partial_y)f(y)]\diff y.
\end{align*}
So we obtain
\begin{align*}
&\bigg\|p(\partial_x)\int_{\R}\F_2^{-1}\sigma_b\bigg(\frac{x+y}{2}, x-y\bigg)[\iota(i\partial_y)f(y)]\diff y\bigg\|_{L^2_x}\\
\lesssim~& \bigg\|\int_{\R}\F_2^{-1}\sigma_b\bigg(\frac{x+y}{2}, x-y\bigg)[\iota(i\partial_y)f(y)]\diff y\bigg\|_{L^2_x}\\
\lesssim~& \|\iota(i\partial_y)f(y)\|_{L^\infty_y}\bigg\|\F_2^{-1}\sigma_b\bigg(\frac{x+y}{2}, x-y\bigg)\bigg\|_{L^1_yL^2_x}\\
=~& \|\iota(i\partial_y)f(y)\|_{L^\infty_y}\bigg\|\F_2^{-1}\sigma_b\bigg(x-\frac z2, z\bigg)\bigg\|_{L^1_zL_x^2},
\end{align*}
where the last term satisfies
\begin{align*}
\bigg\|\F_2^{-1}\sigma_b\bigg(x-\frac z2, z\bigg)\bigg\|_{L^1_zL_x^2}&=\bigg\|\F_\xi^{-1}\F_{\zeta_1}\bigg[\chi\bigg(\frac{|\zeta_1|^2}{1+4|\xi|^2}\bigg)\tilde{b}(\zeta_1,\xi)\bigg] \bigg(x-\frac z2, z\bigg)\bigg\|_{L^1_zL_x^2}\\
&=\bigg\|\F_\xi^{-1}\F_{\zeta_1}\bigg\{(1-\partial_\xi^2)^{1/2}\bigg[\chi\bigg(\frac{|\zeta_1|^2}{1+4|\xi|^2}\bigg)\tilde{b}(\zeta_1,\xi)\bigg]\bigg\} \bigg(x-\frac z2, z\bigg)\frac1{(1+z^2)^{1/2}}\bigg\|_{L^1_zL_x^2}\\
&\lesssim
\bigg\|\F_\xi^{-1}\F_{\zeta_1}\bigg\{(1-\partial_\xi^2)^{1/2}\bigg[\chi\bigg(\frac{|\zeta_1|^2}{1+4|\xi|^2}\bigg)\tilde{b}(\zeta_1,\xi)\bigg]\bigg\} (x, z)\bigg\|_{L^2_zL_x^2}\\
&= \bigg\|(1-\partial_\xi^2)^{1/2}\bigg[\chi\bigg(\frac{|\zeta_1|^2}{1+4|\xi|^2}\bigg)\tilde{b}(\zeta_1,\xi)\bigg] (\zeta_1, \xi)\bigg\|_{L^2_\xi L_{\zeta_1}^2}\\
&\lesssim \|b\|_{L^2_xH^{1}_\xi}\lesssim \|a\|_{\mathcal L^2_1}.
\end{align*}

For the high frequency part, we make a dyadic decomposition of $f$, after which we mainly need to estimate
\[
\int_{\R}\F_2^{-1}\sigma_a\bigg(\frac{x+y}{2}, x-y\bigg) \left[(1-\iota(i\partial_y))\psi_k(i\partial_y)f(y)\right] \diff y= \int_{\R} (1-\iota(\eta))\chi\bigg(\frac{|\xi-\eta|^2}{1+|\xi+\eta|^2}\bigg)\tilde{a}\Big(\xi-\eta, \frac{\xi + \eta}{2}\Big)\psi_k(\eta) \hat f(\eta)\diff\eta.
\]

When $|\eta| > 2$, we have
\[
\frac1{18}|\eta|\leq |\xi| \leq \frac{35}{18}|\eta|,\quad  \frac{1}2|\eta|\leq |\xi+\eta|\leq \frac{40}9|\eta|
\]
on the support of the cut-off function $\chi\left(\frac{|\xi-\eta|^2}{1+|\xi+\eta|^2}\right)$.
Therefore $|\eta|\approx|\xi+\eta|\approx |\xi|\approx 2^k$ on the support, and, since $|\eta| > 2$, we only need to consider $k\geq 0$.

By the H\"older inequality and a change of coordinates,
\begin{align*}
&\bigg\|p(\partial_x)\int_{\R}\F_2^{-1}\sigma_a\bigg(\frac{x+y}{2}, x-y\bigg)[(1-\iota(i\partial_y))f_k(y)]\diff y\bigg\|_{L^2_x}\\
\lesssim~&2^{-k}\| p(\partial_x)\px f_k\|_{L^\infty}\bigg\|\F_\xi^{-1}\F_{\zeta_1}\bigg[\chi\bigg(\frac{|\zeta_1|^2}{1+4|\xi|^2}\bigg)\tilde{a}(\zeta_1,\xi)\psi_k(\xi)\bigg]\bigg(\frac{x+y}{2}, x-y\bigg)\bigg\|_{L^1_yL^2_x}\\
\lesssim~&2^{-k}\| p(\partial_x)\px f\|_{L^\infty}\bigg\|\F_\xi^{-1}\F_{\zeta_1}\bigg[\chi\bigg(\frac{|\zeta_1|^2}{1+4|\xi|^2}\bigg)\tilde{a}(\zeta_1,\xi)\psi_k(\xi)\bigg]\bigg(x+\frac{z}{2}, z\bigg)\bigg\|_{L^1_zL^2_x}.
\end{align*}
The last term satisfies
\begin{align*}
&\bigg\|\F_\xi^{-1}\F_{\zeta_1}\bigg[\chi\bigg(\frac{|\zeta_1|^2}{1+4|\xi|^2}\bigg)\tilde{a}(\zeta_1,\xi)\psi_k(\xi)\bigg]\bigg(x+\frac{z}{2}, z\bigg)\bigg\|_{L^1_zL^2_x}\\
=~&\bigg\|\F_\xi^{-1}\F_{\zeta_1}\bigg\{(1-\partial_\xi^2)^{1/2}\bigg[\chi\bigg(\frac{|\zeta_1|^2}{1+4|\xi|^2}\bigg)\tilde{a}(\zeta_1,\xi)\psi_k(\xi)\bigg]\bigg\}\bigg(x+\frac{z}{2}, z\bigg)\frac1{(1+z^2)^{1/2}}\bigg\|_{L^1_zL^2_x}\\
\lesssim~&
\bigg\|\F_\xi^{-1}\F_{\zeta_1}\bigg\{(1-\partial_\xi^2)^{1/2}\bigg[\chi\bigg(\frac{|\zeta_1|^2}{1+4|\xi|^2}\bigg)\tilde{a}(\zeta_1,\xi)\psi_k(\xi)\bigg]\bigg\}(x, z)\bigg\|_{L^2_zL^2_x}\\
\lesssim~&\bigg\|\partial_\xi\bigg[\chi\bigg(\frac{|\zeta_1|^2}{1+4|\xi|^2}\bigg)\tilde{a}(\zeta_1,\xi)\psi_k(\xi)\bigg]\bigg\|_{L^2_{\zeta_1}L^2_\xi}+\bigg\|\chi\bigg(\frac{|\zeta_1|^2}{1+4|\xi|^2}\bigg)\tilde{a}(\zeta_1,\xi)\psi_k(\xi)\bigg\|_{L^2_{\zeta_1}L^2_\xi}\\
\lesssim~&\bigg\|\frac{8|\xi||\zeta_1|^2}{(1+4|\xi|^2)^2}\chi'\bigg(\frac{|\zeta_1|^2}{1+4|\xi|^2}\bigg)\tilde{a}(\zeta_1,\xi)\psi_k(\xi)\bigg\|_{L^2_{\zeta_1}L^2_\xi}+\bigg\|\chi\bigg(\frac{|\zeta_1|^2}{1+4|\xi|^2}\bigg)\partial_\xi\tilde{a}(\zeta_1,\xi)\psi_k(\xi)\bigg\|_{L^2_{\zeta_1}L^2_\xi}\\
&+\bigg\|\chi\bigg(\frac{|\zeta_1|^2}{1+4|\xi|^2}\bigg)\tilde{a}(\zeta_1,\xi)\partial_\xi\psi_k(\xi)\bigg\|_{L^2_{\zeta_1}L^2_\xi}+\bigg\|\chi\bigg(\frac{|\zeta_1|^2}{1+4|\xi|^2}\bigg)\tilde{a}(\zeta_1,\xi)\psi_k(\xi)\bigg\|_{L^2_{\zeta_1}L^2_\xi}\\
\lesssim~&2^{-k/2} \| a(x, \xi)\|_{L^2_xL^{\infty}_\xi}+2^{k/2} \|\partial_\xi a(x, \xi)\|_{L^2_xL^{\infty}_\xi}+2^{-k/2} \| a(x, \xi)\|_{L^2_xL^{\infty}_\xi}+2^{k/2} \| a(x, \xi)\|_{L^2_xL^{\infty}_\xi}.
\end{align*}
Summing  these inequalities over $k\geq 0$, we obtain that
\begin{align*}
\bigg\|p(\partial_x)\int_{\R}\F_2^{-1}\sigma_a\bigg(\frac{x+y}{2}, x-y\bigg)[(1-\iota(i\partial_y))f(y)]\diff y\bigg\|_{L^2_x}
\lesssim~\|p(\partial_x)\px f\|_{L^\infty}\|a\|_{\mathcal L^2_1}.
\end{align*}
The theorem then follows by combining the low and high frequency estimates.
\end{proof}

\subsection{Composition}
Finally, we state a commutator estimate for Weyl paraproducts. The composition of two symbols $a$ and $b$ is defined by
\[
a\#b(x,\xi)=\iint_{\R^2}e^{-iy\eta}a(x,\xi+\eta)b(y+x,\xi)\diff y\diff\eta.
\]
The following theorem is from \cite{lerner} (see Theorem~2.3.7).
\begin{theorem}[Composition]
Let $a_1\in S_{1,0}^{m_1}$ and $a_2\in S_{1,0}^{m_2}$. Then
\[
a_1\#a_2-a_1a_2-\frac{1}{2i}\{a_1,a_2\}\in S^{m_1+m_2-2}_{1,0},
\]
where $\{a_1, a_2\} = \partial_\xi a_1 \partial_x a_2 - \partial_\xi a_2 \partial_x a_1$ is the Poisson bracket.
\label{composition}
\end{theorem}
Using Theorem \ref{L2cont}, we therefore obtain the following estimate.
\begin{theorem}
Let $a \in \Gamma_{3, 3}^{m_1}$, $b \in \Gamma_{3, 3}^{m_2}$, and $f \in H^s(\R)$. Then
\[
T_a T_b f = T_{ab} f + \frac{1}{2 i} T_{\{a, b\}} f + \Rc,
\]
where $\{a, b\} = \partial_2 a\cdot \partial_1 b - \partial_2 a\cdot \partial_1 b$ is the Poisson bracket of $a$ and $b$, and the remainder $\Rc$ satisfies
\[
\|\Rc\|_{H^{s-(m_1+m_2-2)}} \lesssim M_{3, 3}^{m_1}(a) M_{3, 3}^{m_2}(b) \|f\|_{H^s}.
\]
\end{theorem}

\end{document}